\definecolor{rred}{rgb}{0.7,0.0,0.2}
\definecolor{bblue}{rgb}{0.2,0.0,0.7}
\newcommand{\bpr}{\begin{trivlist} \item[]{\bf Proof. }}
\newcommand{\epr}{\hspace*{\fill} $\qed$\end{trivlist}}
\newcommand{\be}{\begin{eqnarray}}
\newcommand{\ee}{\end{eqnarray}}
\newcommand{\ba}{\begin{align}}
\newcommand{\ea}{\end{align}}
\newcommand{\bi}{\begin{itemize}}
\newcommand{\ei}{\end{itemize}}
\newtheorem{theorem}{Theorem}[section]
\newtheorem{proposition}[theorem]{Proposition}
\newtheorem{definition}[theorem]{Definition}
\newtheorem{lemma}[theorem]{Lemma}
\newtheorem{remark}[theorem]{Remark}
\numberwithin{equation}{section}
\newtheorem{method}{Method}
\newcommand{\mthlab}[1]{\label{mth:#1}}
\newcommand{\mthref}[1]{Method~\ref{mth:#1}}
\newcommand\ed[1]{{\color{black} #1}}%
\begin{document}

\title[Blowup for flat slow manifolds]{Blowup for flat slow manifolds}

\author {K. U. Kristiansen} 
\date\today
\maketitle

\vspace* {-2em}
\begin{center}
\begin{tabular}{c}
Department of Applied Mathematics and Computer Science, \\
Technical University of Denmark, \\
2800 Kgs. Lyngby, \\
DK
\end{tabular}
\end{center}

 \begin{abstract}
In this paper we present a method for extending the blowup method, in the formulation of Krupa and Szmolyan, to flat slow manifolds that lose hyperbolicity beyond any algebraic order. Although these manifolds have infinite co-dimension, they do appear naturally in certain settings. For example in (a) the regularization of piecewise smooth systems by $\tanh$, (b) a model of aircraft landing dynamics, and finally (c) in a model of earthquake faulting. We demonstrate the approach on a simple model system and the examples (a) and (b).
\end{abstract}
\section{Introduction}
In this paper we focus on slow-fast systems on $(x,y)\in \mathbb R^{n_s\times n_f}$ of the form
\begin{align}
 x' &=\varepsilon f(x,y,\varepsilon),\label{eq.fast}\\
 y'&=g(x,y,\varepsilon),\nonumber
\end{align}
where $()'=\frac{d}{d\tau}$ denotes differentiation with respect to the fast time $\tau$. System \eqref{eq.fast} is called the fast system. 
Near points where $g(x,y,\varepsilon)=\mathcal O(1)$ the variables $x\in \mathbb R^{n_s}$ and $y\in \mathbb R^{n_f}$ vary on separate time scales for $0<\varepsilon\ll 1$. The variable $y$ is therefore called fast, while $x$ is said to be slow. On the other hand, near points where $g(x,y,\varepsilon)=\mathcal O(\varepsilon)$ then the variables both evolve on the slow time scale $t=\varepsilon \tau$. This is described by the slow system:
\begin{align}
 \dot x &=f(x,y,\varepsilon), \label{eq.slow}\\
 \varepsilon \dot y&=g(x,y,\varepsilon),\nonumber
\end{align}
with $\dot{()}=\frac{d}{dt}$. 
Setting $\varepsilon=0$ in \eqref{eq.fast} and \eqref{eq.slow} gives rise to two different limiting systems: \eqref{eq.fast}$_{\varepsilon=0}$:
\begin{align}
 x' &=0,\label{eq.layer0}\\
 y'&=g(x,y,0),\nonumber
\end{align}
called the \textit{layer problem}, and \eqref{eq.slow}$_{\varepsilon=0}$:
\begin{align}
 \dot x &=f(x,y,0),\label{eq.reduced0}\\
 0&=g(x,y,0),\nonumber
\end{align}
called the \textit{reduced problem}. The set 
\begin{align*}
 C=\{(x,y)\vert g(x,y,0)=0\},
\end{align*}
is called the critical manifold. Eq. \eqref{eq.reduced0} is only defined on the set $C$, while $C$ is a set of critical points of \eqref{eq.layer0}. Subsets $S\subset C$, where the linearization of \eqref{eq.layer0} about $(x,y)\in S$ only has as many eigenvalues with zero real part as there are slow variables $n_s$,  are called \textit{normally hyperbolic}. Due to the special structure of \eqref{eq.layer0} normally hyperbolicity is equivalent to all eigenvalues $\lambda_i$, $i=1,\ldots, n_f$, of $\partial_{y} g(x,y)$, $(x,y)\in S$, satisfying $\text{Re}\,\lambda_i\ne 0$. A normally hyperbolic subset $S$ of $C$ can, by the implicit function theorem applied to $g(x,y)=0$ with $\partial_y g(x,y)\ne 0$, be written as a graph 
\begin{align}
S:\quad y=h_0(x),\quad x\in U.\label{eq.Sgraph}
\end{align}
Fenichel's geometric singular perturbation theory \cite{fen1,fen2} establishes for $0<\varepsilon\ll 1$ (a) the smooth perturbation of $S$ in \eqref{eq.Sgraph} with $U$ compact to
\begin{align*}
 S_\varepsilon:\quad y=h_0(x)+\mathcal O(\varepsilon),
\end{align*}
for $0<\varepsilon\ll 1$, and (b) the existence and smoothness of stable and unstable manifolds of $S_\varepsilon$, tangent at $(x,y)\in S$ for $\varepsilon=0$ to the associated linear spaces of the linearization of \eqref{eq.layer0}. As a consequence, the dynamics of \eqref{eq.slow}, in a vicinity of a normally hyperbolic critical manifold $S$, is accurately described for $0<\varepsilon\ll 1$ as a concatenation of orbits, respecting the direction of time, of the layer problem and the reduced problem. For an extended introduction to the subject of slow-fast theory, the reader is encouraged to consult the references \cite{jones_1995,kaper, kuehn2015}. 

Fenichel's theory does not apply near singular points where $C$ is nonhyperbolic. To deal with such degeneracies, and extend the theory of geometric singular perturbations, the blowup method \cite{dumortier_1991,dumortier_1993,dumortier_1996}, in particular in the formulation of Krupa and Szmolyan \cite{krupa_extending_2001, krupa_extending2_2001,krupa_relaxation_2001}, have proven extremely useful. The method \textit{blows up} the singularity to a higher dimensional geometric object, such as a sphere or a cylinder. By appropriately choosing weights associated to the transformation, it is in many situations possible to divide the resulting vector-field by a power of a polar-like variable measuring the distance to the singularity. This gives rise to a new vector-field, only equivalent to the original one away from the singularity, for which hyperbolicity has been (partially) gained on the blowup of the singularity. Sometimes this approach of blowing up singularities has to be used successively, see e.g. \cite{elena, kosiuk2011a}. 
In this paper we study situations where the blowup approach does not apply directly.

In combination Fenichel's geometric singular perturbation theory and the blowup method have been very successful in describing global phenomena in slow-fast models. A frequently occuring phenomena in such models are relaxation oscillations, characterized by repeated switching of slow and fast motions. A prototypical system where this type of periodic orbit occurs is the van Pol system \cite{krupa_relaxation_2001,kuehn2015} but they also appear in many other models, in particular in those arising from neuroscience \cite{izhi}. 

Recently more complicated examples of relaxation oscillations have been studied using these geometric methods. In \cite{kosiuk2015a}, for example, the authors consider a model for the embryonic cell division cycle at the molecular level in eukaryotes. This model is an example of a slow-fast system not in the standard form \eqref{eq.fast}: The slow-fast behaviour is in some sense hidden. Furthermore, the chemistry imposes particular nonlinearities that give rise to special self-intersections of the set of critical points. Using the blowup method, the authors show that this in turn results in a novel type of relaxation oscillation in which segments of the periodic orbit glue close to the self-intersections of the critical set. In \cite{Gucwa2009783} another interesting oscillatory phenomenon is studied involving an unbounded critical manifold. Finally, the reference \cite{kosiuk2011a} provides a detailed description of relaxation oscillations occurring in a model describing glycolytic oscillations with two small singular parameters. The references \cite{Gucwa2009783} and \cite{kosiuk2011a} both apply blowup methods. These examples illustrate that each problem is unique;  models have their own peculiar degeneracies, and a unifying framework is not possible. Yet the blowup method provides a foundation from which these systems can be dealt with rigorously. However, recently in \cite{elena}, the authors consider a model from \cite{erickson2008a}:
\begin{align}
 \dot x&=e^{z}(x+(1+\alpha)z),\label{eq.elena}\\
 \dot y &= e^z-1,\nonumber\\
 \varepsilon \dot z&=-e^{-z}\left(y+\frac{x+z}{\xi}\right),\nonumber
\end{align}
(using the variables introduced in \cite[Eq. (1)]{elena}), 
describing earthquake faulting, for which the blowup method, in its original formulation, fails. We will explain why in the following.

The system \eqref{eq.elena} has a degenerate Hopf bifurcation at $\alpha=\xi,\,\varepsilon=0$ within an everywhere attracting, but unbounded, critical manifold
\begin{align}
 y+\frac{x+z}{\xi}=0.\label{eq.elenaC}
\end{align}
A vertical family of periodic orbits emerges from this bifurcation for $\varepsilon=0$. The analysis of the perturbation of these periodic orbits is initially complicated by the loss of compactness. Using Poincar\'e compactification \cite{chicone}, the authors study the critical manifold at infinity. There the critical manifold is shown to lose normal hyperbolicity. This is due to the fact that the non-trivial eigenvalue of the linearization of \eqref{eq.elena}$_{\varepsilon=0}$ about \eqref{eq.elenaC} decays exponentially as $z\rightarrow \infty$. The blowup method requires the homogeneity of algebraic terms to leading order, and this approach does therefore not directly apply to the exponential loss of hyperbolicity that occurs in this model. Applying the method presented in the present paper, the authors of \cite{elena} nevertheless managed to obtain a new geometric insight into the peculiar relaxation oscillations that occur in this model. In particular, a locally invariant manifold was found, different from the critical manifold of the system, and not directly visible prior to blowup, which organizes the dynamics at infinity. The basic idea of the method in the present paper is to embed the system into a higher dimensional model. \ed{Increasing the phase space dimension is already central to the original blowup formulation of Krupa and Szmolyan as the small parameter is always included in the blowup. In our method, we will augment a new dynamic variable in such a way that the resulting system is algebraic to leading order at the degeneracy and therefore (potentially) amendable to blowup. }

%
%
%
Another example,  where blowup does not apply (or seem to be useful), is a slow-fast system undergoing a dynamic Hopf bifurcation. Similar problems occur in Hamiltonian systems with fast oscillatory behaviour. Such systems are studied in \cite{geller1,2012arXiv1208.4219U,kristiansen2015a} using separate techniques. However, in the case of dynamic Hopf, it is shown in \cite{hayes2015a} that blowup can be successfully applied when combined with the technique, popularized by Neishtadt in \cite{neishtadt1987a,neishtadt1988a}, of complex time. The reference \cite{usNext} deals with fast oscillations using a different approach. 
Similarly, the blowup approach does not appear to help when applied directly to the problem of bifurcation delay \cite{schecter}. However, recently in \cite{de2016a}, the authors showed that a transformation of the fast variable by a \textit{flat} function, can bring the system into a system that is amendable to blowup.  \ed{The problem of bifurcation delay was also considered in \cite{ting}. Here the author describes this problem in a different way, following an approach similar to the one promoted in the present paper, by extending the phase space dimension by one. This extension  enables the use of known results such as exchange lemmas \cite{schecter2008a}. }

\subsection{Overview}
\ed{ In general, our approach can be summarized as follows:
 \begin{method}\mthlab{method}
  Consider a system $(x,y)\in \mathbb R^{n+1}$ with a small parameter $\epsilon\ge 0$. Suppose that there exists a critical manifold for $\epsilon=0$ with an eigenvalue $\lambda(y)$ that decays exponentially as $y\rightarrow 0^+$. To study the system using blowup (and hyperbolic methods of dynamical systems theory) then do the following:
\begin{itemize}
 \item[Step 1:] Introduce $q=\lambda(y)$ (or a scaling thereof) as a new dependent variable.
 \item[Step 2:] Use implicit differentiation to obtain a differential equation for $q$.
 \item[Step 3:] Substitute $q$ for those expressions in $y$ in the original system that decays exponentially. (Do not substitue for expressions in $y$ that decay algebraically). This gives an extended system $(x,y,q)\in \mathbb R^{n}\times \overline{\mathbb R}_+^2$ that agrees with the original one on the invariant set $\{(x,y,q)\vert q=\lambda(y)\}$. (It may be necessary to multiply the right hand side by powers of $y$ to ensure that the extended system is well-defined at $y=0$.) 
 \item[Step 4:] Consider the system on $(x,y,q,\varepsilon)\in \mathbb R^{n}\times \overline{\mathbb R}_+^3$ and apply a weighted blowup of the variables $(x,q,\epsilon)$ (but not $y$) with $q=\epsilon=0$ .
\end{itemize}
\end{method}}
We do not aim to provide a general geometric framework for our approach. This must be part of future research. Instead we will focus on successful applications of our approach. In section \ref{sec.model0} we first present the general problem and illustrate our approach by considering an extension of a set of models considered by Kuehn in \cite{kuehn2014}. 
 In section \ref{sec.tanh} we then apply the method to study regularization of piecewise smooth (PWS) systems. PWS systems are of great significance in applications \cite{Bernardo08}; they occur in mechanics (friction, impact), in biology (genetic regulatory networks) and in variable structure systems in control engineering \cite{Utkin77}. But these systems also pose many problems, both computationally and mathematically. A frequently used approach is therefore to apply regularization. This has for example been done in the references \cite{krihog,krihog2,krihog3,us,usNext}, to deal with problems associated with lack of uniqueness in PWS systems, and in \cite{reves_regularization_2014}. These references, however, exclude regularization functions such as $\tanh$ due its special asymptotic properties that leads to loss of hyperbolicity at an exponential rate. We will in this paper demonstrate how the approach of this paper can be used to study the regularization by $\tanh$. 

In section \ref{sec.aircraft} we finally consider a model from \cite{aircraft} of aircraft ground dynamics. The model is a $6D$ rigid body model but displays $2D$ slow-fast phenomena such as a canard-like explosion of limit cycles. The authors of \cite{aircraft} do not investigate the origin of the slow-fast structure but present the following $2D$ slow-fast model:
\begin{align}
 \dot u &= -\varepsilon(\alpha-v),\label{eq.aircraftModel00}\\
 \dot v&= -u-(v-a)e^{vb}.\nonumber
\end{align}
see \cite[Eqs. (7) and (8)]{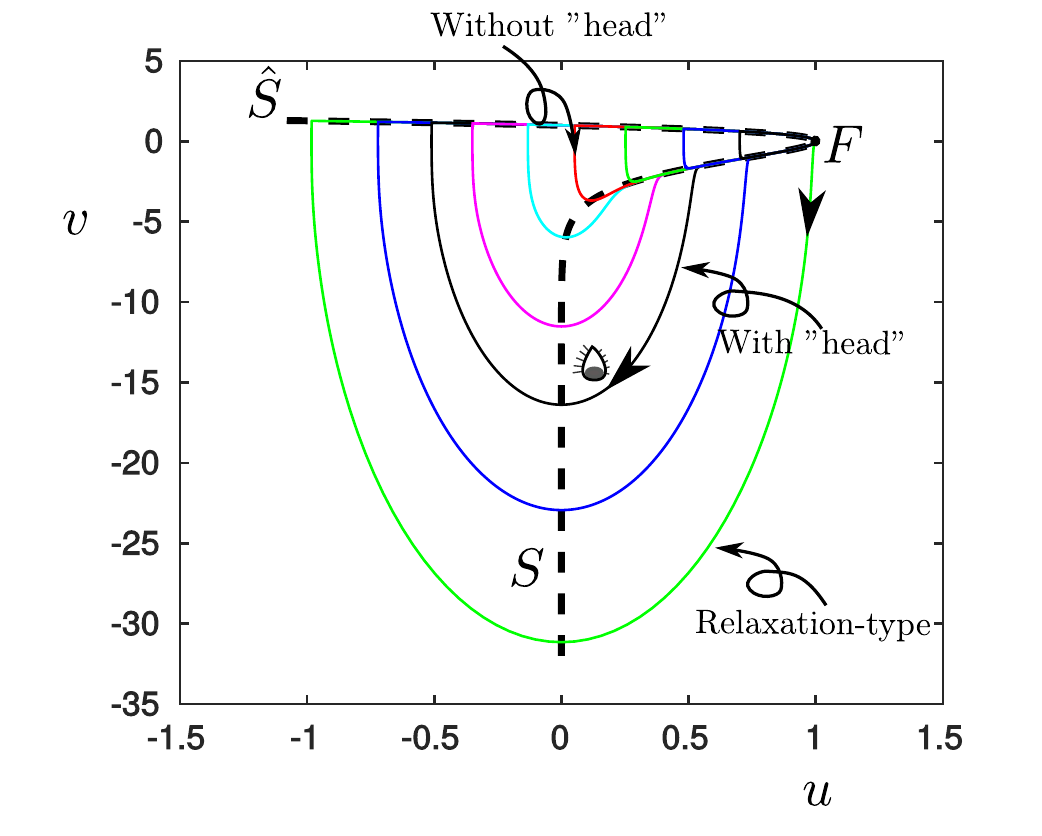}, as a \textit{minimal model} capturing the key features. Here $(u,v)$ describes the planar velocity of the center of mass of the aircraft in body fixed coordinates. See \cite[Figs. (3) and (4)]{aircraft}. The critical manifold of this system loses hyperbolicity beyond any algebraic order as $v\rightarrow -\infty$. We will apply the method of this paper to describe the special canard explosion occurring in this model. 
\section{A method for flat slow manifolds}\label{sec.model0}
Kuehn in \cite[Proposition 6.3]{kuehn2014} studies the following slow-fast system
\begin{align}
 \dot u &=\varepsilon \mu,\label{eq.kuehn0}\\
 \dot v&=1-v^n u,\nonumber
\end{align}
with $0<\varepsilon\ll 1$, $\mu\ne 0$, $(u,v)\in \mathbb R^2$, and $n\in \mathbb N$. 
Here we focus on $v\ge c>0$ and $n\ge 2$. In comparison with \cite{kuehn2014} we have also replaced $x,\,y,\,s$ by $v,\,u$ and $n$, respectively.
The set
\begin{align*}
 S_a:\quad u = v^{-n},\quad v\ge c>0,
\end{align*}
is an attracting critical manifold of \eqref{eq.kuehn0}. Indeed the linearization of \eqref{eq.kuehn0} about $S_a$ gives
$-nv^{n-1}u = -n v^{-1}$
as a single non-trivial negative eigenvalue. By Fenichel's theory, fixed compact subsets of $S_a$ smoothly perturb into an attracting, locally invariant slow manifolds $S_{a,\varepsilon}$. The reference \cite{kuehn2014} investigates how far as $v\rightarrow \infty$ the manifold $S_{a,\varepsilon}$ can be extended as a perturbation of $S_a$. For this the author applies the change of coordinates $(u,v)=(x,y^{-1})$ to compactify $v\ge c$. This gives
\begin{align}
 \dot x&=\varepsilon\mu y^n,\label{eq.kuehn1}\\
 \dot y &=y^{2}(x-y^n),\nonumber\\
 \dot \varepsilon &=0,\nonumber
\end{align}
after a nonlinear transformation of time that corresponds to multiplication of the right hand side by $y^n$. \ed{This time transformation ensures that the system is well-defined at $y=0$ and leaves the orbits with $y>0$ unchanged.} In the $(x,y)$-variables, $S_a$ becomes
\begin{align*}
 S_a:\quad x=y^n.
\end{align*}
We continue to use the same symbol for $S_a$ in the new $(x,y)$-variables. The manifold $S_a$ is nonhyperbolic at $y=0$. This is due to the asymptotic alignment of the tangent spaces $TS_a$ for $v\rightarrow \infty$ with the critical fibers in the original $(u,v)$-variables. The result \cite[Proposition 6.3]{kuehn2014} then states that $S_{a,\varepsilon}$ extends within $x\ge 0,y\ge 0$ as a perturbation of $S_a$ up until a neighborhood of $(x,y)=0$ that scales like
\begin{align}
 (x,y) = \left(\mathcal O(\varepsilon^{n/(n+1)}),\mathcal O(\varepsilon^{1/(n+1)})\right),\label{eq.kuehnResult}
\end{align}
with respect to $\varepsilon\rightarrow 0$. 
\subsection{Blowup}
To prove \eqref{eq.kuehnResult}, Kuehn, in \cite{kuehn2014}, applies the following blowup transformation
\begin{align}
 x = r^n \bar x,\quad y = r\bar y,\quad \varepsilon = r^{n+1}\bar\epsilon,\quad (r,(\bar x,\bar y,\bar\epsilon))\in \overline{\mathbb R}_+\times S^2,\label{eq.BUKuehn}
\end{align}
of $(x,y,\varepsilon)=0$. Here $$S^2=\{(\bar x,\bar y,\bar \epsilon)\vert \bar x^2+\bar y^2+\bar \epsilon^2=1\}.$$
We introduce the blowup method by considering this example. Let $B=\overline{\mathbb R}_+\times S^2$ denote the \textit{blowup space}. Then the blowup \eqref{eq.BUKuehn} can be viewed as a mapping:
\begin{align*}
 \Phi:\quad B\rightarrow \mathbb R^3,
\end{align*}
blowing up the nonhyperbolic point $$x=y=\varepsilon=0,$$
to a sphere $(\bar x,\bar y,\bar\epsilon)\in S^2$ within $r=0$. 
The map $\Phi$ transforms the vector-field $X$ in \eqref{eq.kuehn1} to a vector-field $\overline X=\Phi^*(X)$ on $B$ by pull-back. Here $\overline{X}\vert_{r=0}=0$ but the exponents, or weights, of $r$ in the blowup \eqref{eq.FirstBU}, $n$, $1$ and $n+1$, respectively, have been chosen so that $\overline{X}$ has a power of $r$, here $r^{n+1}$, as a common factor. The vector-field can therefore be \textit{desingularized} through the division of $r^{n+1}$. In particular, $\widehat X\equiv r^{-(n+1)} \overline{X}$ is well-defined and non-trivial $\widehat X\vert_{r=0}\ne 0$. To described the dynamics of $\widehat X$ on the blowup space we could use spherical coordinates to describe $S^2$. But as demonstrated in \cite{krupa_extending_2001} the dynamics across the blowup sphere varies significantly in general and it is therefore almost mandatory to use directional charts. \ed{Loosely speaking, we obtain a directional chart for \eqref{eq.BUKuehn}, describing the subset $B\cap \{\bar y>0\}$, by setting 
%
%
$\bar y=1$:
\begin{align}
 x=r_1^n x_1,\quad y=r_1,\quad \varepsilon=r_1^{n+1}\epsilon_1.\label{eq.firstChart}
\end{align}
Setting $\bar \epsilon=1$ in \eqref{eq.BUKuehn} similarly gives the \textit{scaling chart}:
\begin{align}
 x=r_2^n x_2,\quad y=r_2y_2,\quad \varepsilon=r_2^{n+1},\label{eq.temp}
\end{align}
We will use subscripts to distinguish the variables in \eqref{eq.firstChart} and \eqref{eq.temp} from those appearing in \eqref{eq.BUKuehn}. Geometrically \eqref{eq.firstChart}, for example, can be interpreted as a stereographic-like projection from the plane $\{(x_1,1,\epsilon_1)\vert (x_1,\epsilon_1) \in \mathbb R^2\}$, tangent to $S^2$ at $(\bar x,\bar y,\bar \epsilon)=(0,1,0)$, to the upper hemisphere $S^2\cap\{\bar y>0\}$:
\begin{align*}
 x_1 = \frac{\bar x}{\bar y^n},\quad \epsilon_1 = \frac{\bar \epsilon}{\bar y^{n+1}}.
\end{align*}
See also Fig. (\ref{fig.New}) below which illustrates charts used later in the manuscript. We shall therefore frequently abuse notation slightly and simply refer to (\ref{eq.firstChart}) and (\ref{eq.temp}) as the charts  ``$\bar y=1$'' and ``$\bar \epsilon=1$'', respectively. 
 }

Different blowups and charts will appear during the manuscript. We will use the same notation and often identical symbols for each blowup. Although this can potentially lead to confusion we also believe that it stresses the standardization of the method, emphasizing the similarities of the arguments and the geometric constructions. Blowup variables are given a bar, such as $(\bar x,\bar y,\bar\epsilon)$ in \eqref{eq.BUKuehn}. Charts such as \eqref{eq.firstChart} will be denoted by $\kappa_i$, using subscripts to distinguish between the charts and the corresponding local coordinates. Similarly, we will use the (standard) convention that manifolds, sets and other dynamical objects in chart $\kappa_i$ are given a subscript $i$. An object (set, manifold), say $M_i$ obtained in chart $\kappa_i$, will in the blowup variables be denoted by $\overline M$. Finally, if $M_i$ in chart $\kappa_i$ is visible in chart $\kappa_j$ then it will be denoted by $M_j$ in terms of the coordinates in this chart.

\subsection{A flat slow manifold}\label{sec.modelFlat}
Now, we return to \eqref{eq.kuehn1} and ask the following question: What happens if we replace $y^n$ in \eqref{eq.kuehn1} by $e^{-y^{-1}}$? Then  
\begin{align}
 \dot x&=\varepsilon \mu e^{-y^{-1}},\label{eq.kuehnNew}\\
 \dot y &=y^2(x-e^{-y^{-1}}),\nonumber
\end{align}
and
\begin{align}
 S_a:\quad x = e^{-y^{-1}},\label{eq.SauEqExp}
\end{align}
re-defining $S_a$ again, is a critical manifold of \eqref{eq.kuehnNew}. It is \textit{flat} as a graph over $y$ at $y=0$ in the sense that all derivatives of the right hand side of \eqref{eq.SauEqExp} vanish at $y=0$. The linearization of \eqref{eq.kuehnNew} about $S_a$ in \eqref{eq.SauEqExp} gives
\begin{align}
 -e^{-y^{-1}},\label{eq.eigenvalue0}
\end{align}
as a single non-trivial eigenvalue. Hence $S_a$ is therefore attracting for $y>0$ but loses hyperbolicity at an exponential rate as $y\rightarrow 0^+$. Then, as outlined in the introduction, the blowup method does not directly apply. The blowup method requires homogeneity of the leading order terms to enable the desingularization, and letting $n\rightarrow \infty$ in \eqref{eq.kuehn1} and \eqref{eq.kuehnResult} is clearly hopeless. We will in the following demonstrate the use of \mthref{method} by extending the slow manifold of \eqref{eq.kuehnNew} near $y=0$. Generalizations of the approach to more general examples of flat functions, such as $y^{\alpha}e^{-y^{-\beta}}$ with $\beta>0$, are straightforward.

 \subsection{How to deal with flat slow manifolds}
 We consider each of the steps in \mthref{method}.
 \subsection*{Step 1}
 The basic idea of our approach is to augment the exponential:
\begin{align}
 q = e^{-y^{-1}},\label{eq.kuehnQ0}
\end{align}
which is also the negative of the eigenvalue \eqref{eq.eigenvalue0}, as a new dynamic variable. 
\subsection*{Step 2}
Differentiating \eqref{eq.kuehnQ0} gives
\begin{align*}
 \dot q&=e^{-y^{-1}} y^{-2}\dot y = e^{-y^{-1}}(x-e^{-y^{-1}})=q(x-q),
\end{align*}
using \eqref{eq.kuehnNew} in the second equality and \eqref{eq.kuehnQ0} in the third. 
\subsection*{Step 3}
But then using \eqref{eq.kuehnQ0} in \eqref{eq.kuehnNew} we obtain an extended system
\begin{align}
 \dot x&=\varepsilon \mu q,\label{eq.eqkuehnq}\\
 \dot y&=y^2 (x-q),\nonumber\\
 \dot q&=q(x-q),\nonumber\\
 \dot \varepsilon &=0,\nonumber
\end{align}
on $(x,y,q,\varepsilon)\in \mathbb R\times \overline{\mathbb R}_+^3$. 
Introducing $q$ by \eqref{eq.kuehnQ0} automatically embeds a hyperbolic-center structure into the system: To illustrate this simple fact, suppose time is so that $y$ has algebraic, center-like decay as $\mathcal O(1/t)$ for $t\rightarrow \infty$. Then $q$ decays hyperbolically as $\mathcal O(e^{-t})$. This construction will therefore enable the use of center manifold theory and normal form methods to study systems like \eqref{eq.kuehnNew}.

The set 
\begin{align}
 Q=\{(x,y,q,\varepsilon)\vert \,q=e^{-y^{-1}}\},\label{eq.kuehnQ}
\end{align}
is by construction an invariant set of \eqref{eq.eqkuehnq}. However, the invariance of $Q$ is implicit in \eqref{eq.eqkuehnq} and we can invoke it when needed. Now $S_a$ in \eqref{eq.SauEqExp} becomes
\begin{align}
 S_a:\quad x=q,\,\varepsilon=0,\label{eq.SaUEqq}
\end{align}
using the same symbol in the new variables $(x,y,q,\varepsilon)$. 
This is a critical manifold of the extended system \eqref{eq.eqkuehnq} for $\varepsilon=0$. The linearization now has 
$-q$
as a single non-trivial eigenvalue. The two-dimensional manifold $S_a$ in \eqref{eq.SaUEqq} is hyperbolic within $\varepsilon=0$ except along the line $y\ge 0,\,x=q=0$. But now, by construction, the loss of hyperbolicity is algebraic. 
\subsection*{Step 4}
We now apply the following blowup transformation:
\begin{align}
  y=\bar y,\quad x = r \bar x,\quad q = r\bar q,\quad \varepsilon = r\bar\epsilon,\quad (\bar y,r,(\bar x,\bar q,\bar\epsilon))\in \overline{\mathbb R}_+^2\times S^2,\label{eq.FirstBU}
\end{align}
of $(x,q,\varepsilon)=0$. In this case the blowup transformation blows up the line $y\ge 0,\,x=q=\varepsilon=0$ to a cylinder $(\bar y,(\bar x,\bar q,\bar\epsilon))\in \overline{\mathbb R}_+\times S^2$ and desingularization is obtained through division of the resulting vector-field by $r$.
In this section we shall only focus on the following \textit{entry} chart 
\begin{align}
 \kappa_1:\quad \bar q=1:\quad x=r_1x_1,\,q=r_1,\,\varepsilon=r_1 \epsilon_1,\label{eq.chartKappa1Model0}
\end{align}
with $r_1\ge 0$, to cover $S^2\cap\{\bar q>0\}$ of the blowup sphere. Notice that $y$, \ed{in agreement with step 4 of \mthref{method}}, is not transformed by \eqref{eq.FirstBU} and we will therefore for simplicity continue to use this symbol in chart $\kappa_1$ (a convention we frequently follow).
\subsection{Chart $\kappa_1$}
Inserting \eqref{eq.chartKappa1Model0} into \eqref{eq.eqkuehnq} gives
\begin{align}
\dot r_1 &=r_1(x_1-1),\label{eq.eqmodel0}\\
\dot x_1 &=(1-x_1)x_1+\epsilon_1\mu,\nonumber\\
\dot y &=y^2(x_1-1),\nonumber\\
\dot \epsilon_1 &=(1-x_1)\epsilon_1,\nonumber
\end{align}
after division of the right hand side by $r_1$. The set $Q$ in \eqref{eq.kuehnQ} becomes
\begin{align*}
 Q_1=\{(r_1,x_1,y,\epsilon_1)\vert r_1 = e^{-y^{-1}}\}.
\end{align*}
\begin{remark}\label{remark0}
 Notice that the $r_1$-equation decouples in \eqref{eq.eqmodel0}. This is possible in the chart $\bar q=1$ in all the models and settings that I have considered. \ed{It is a consequence of step 3 and the fact that we do not substitute for $q$ in expressions with $y$ that decay algebraically.} For system \eqref{eq.eqkuehnq}, this effectually means that the dimension of the resulting system is the same as the dimension of the chart $\bar y=1$ associated with the blowup in \eqref{eq.BUKuehn} of \eqref{eq.kuehn0}.
 

\end{remark}
Note that $y=\delta$ corresponds to
\begin{align*}
 r_1 = e^{-\delta^{-1}},
\end{align*}
within $Q_1$. Let
\begin{align*}
 \rho(\delta) = e^{-\delta^{-1}}.
\end{align*}
Then we consider the following set
\begin{align*}
 U_1 = \{(r_1,x_1,y,\epsilon_1)\vert r_1\in [0,\rho(\delta)],\,\epsilon_1\in [0,\nu],\,y\in [0,\delta],x_1\in [0,\xi^{-1}]\},
\end{align*}
with $\delta$, $\nu$ and $\xi$ all sufficiently small.

The critical manifold $S_a$ in \eqref{eq.SaUEqq} becomes
\begin{align*}
 S_{a,1}:\quad x_1=1,\,\epsilon_1=0.
\end{align*}
The advantage of the blowup is that we have gained hyperbolicity of $S_{a,1}$ at $r_1=\epsilon_1=0$. Indeed, the linearization of \eqref{eq.eqmodel0} about a point on the line $$L_1:\,x_1=1,\,r_1=0,\,\epsilon_1=0,\,y\ge 0,$$ gives $-1$ as a single non-zero eigenvalue. The associated eigenvector is purely in the $x_1$-direction. The center space is, on the other hand, spanned by two eigenvectors, purely in the direction of $r_1$ and $y$, respectively, and the eigenvector $(0,1,0,\mu^{-1})$. By center manifold theory we therefore directly obtain the following:
\begin{proposition}\label{prop.modelM1}
 Within $U_1$ there exists a center manifold
 \begin{align*}
  M_1:\quad x_1 = 1-\epsilon_1\mu(1+\mathcal O(\epsilon_1)).
 \end{align*}
$M_1$ contains $S_{a,1}$ within $\epsilon_1=0$ as set of equilibria and 
\begin{align}
 C_1:\quad x_1 = 1-\epsilon_1 \mu(1+\mathcal O(\epsilon_1)),\quad r_1=y=0,\label{eq.C10}
\end{align}
contained within $r_1=y=0$, 
as a center sub-manifold. The sub-manifold $C_1$ is overflowing (inflowing) if $\mu>0$ ($\mu<0$). 
\end{proposition}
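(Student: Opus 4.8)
The plan is to realize $M_1$ as a center manifold of the \emph{two}-dimensional manifold of equilibria contained in $S_{a,1}$, and to read off $C_1$ together with its flow direction from the reduced dynamics on this manifold. First I would record the equilibrium structure of \eqref{eq.eqmodel0}: setting the right-hand side to zero shows that $S_{a,1}=\{x_1=1,\,\epsilon_1=0\}$, with $(r_1,y)$ free in $U_1$, is an entire surface of equilibria containing the line $L_1$ at $r_1=0$. Along $S_{a,1}$ the tangent directions are $\partial_{r_1}$ and $\partial_y$, so the transverse coordinates are $(x_1,\epsilon_1)$, and the linearization restricted to these is the constant block $\left(\begin{smallmatrix}-1&\mu\\ 0&0\end{smallmatrix}\right)$ with eigenvalues $-1$ and $0$. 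Thus, uniformly along $S_{a,1}$ and in particular as $y\to 0^+$, the normal spectrum splits into one strictly \emph{stable} direction and one \emph{center} direction; together with $TS_{a,1}$ this recovers the three-dimensional center eigenspace spanned by $\partial_{r_1}$, $\partial_y$ and $(0,1,0,\mu^{-1})$ recorded before the statement.

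Since $S_{a,1}$ is an invariant manifold of equilibria with a uniform normal splitting into a strictly stable and a center direction, I would invoke the center manifold theorem in its form for such (normally non-hyperbolic) invariant manifolds: after extending the vector field smoothly to a full neighbourhood, one obtains for each finite $k$ a $C^k$ locally invariant three-dimensional center manifold $M_1\supset S_{a,1}$, tangent to the center eigenspace along $S_{a,1}$ and, because the complementary normal direction is strictly stable, locally attracting. Restricting to $U_1$, $M_1$ is a graph $x_1=\Psi(r_1,y,\epsilon_1)$ with $\Psi|_{\epsilon_1=0}\equiv 1$, so that $S_{a,1}\subset M_1$ as equilibria. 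To pin down the expansion I would substitute the graph into the invariance identity $\dot x_1=\partial_{r_1}\Psi\,\dot r_1+\partial_y\Psi\,\dot y+\partial_{\epsilon_1}\Psi\,\dot\epsilon_1$ and match powers of $\epsilon_1$; the order-$\epsilon_1$ balance (equivalently, tangency to $(0,1,0,\mu^{-1})$) fixes the linear coefficient and shows that no $r_1$ or $y$ terms enter at leading order, giving the stated form $x_1=1-\epsilon_1\mu(1+\mathcal{O}(\epsilon_1))$.

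To extract $C_1$ I would first note that the plane $\{r_1=y=0\}$ is invariant, since $\dot r_1=r_1(x_1-1)$ and $\dot y=y^2(x_1-1)$ vanish identically there. Hence $C_1:=M_1\cap\{r_1=y=0\}$ is a one-dimensional invariant center submanifold, namely the graph $x_1=\Psi(0,0,\epsilon_1)$ of \eqref{eq.C10}. Parameterising $C_1$ by $\epsilon_1$, the reduced flow is $\dot\epsilon_1=(1-\Psi(0,0,\epsilon_1))\epsilon_1=\mu\epsilon_1^2+\mathcal{O}(\epsilon_1^3)$ by the leading expansion just obtained. The sign of this quadratic term determines whether the vector field points outward or inward at the free boundary $\epsilon_1=\nu$ of $C_1$: for $\mu>0$ it points outward, so $C_1$ is overflowing, and for $\mu<0$ inward, so $C_1$ is inflowing, as claimed.

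The step I expect to be the main obstacle is the existence claim itself. Because $S_{a,1}$ is a non-isolated surface of equilibria that carries a center direction in its normal bundle, Fenichel's theory does not apply and one needs a center manifold theorem that is uniform along $S_{a,1}$ up to and including the boundary $y=0$, on the set $U_1$ with corners. I would address this by extending the vector field off $U_1$, applying the global center manifold theorem to obtain a $C^k$ center manifold on a (possibly $k$-dependent) neighbourhood, and then restricting to $U_1$, keeping in mind that such center manifolds are non-unique so that only existence, not uniqueness, can be asserted. A secondary point requiring care is that the overflowing/inflowing dichotomy depends on the second-order term of the reduced flow, so the expansion of $\Psi$ must be carried one order beyond the tangency condition to determine its sign.
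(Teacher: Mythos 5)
Your strategy is the same as the paper's: the paper also obtains Proposition \ref{prop.modelM1} directly from center manifold theory applied along the set of equilibria (it linearizes along $L_1$, records the single nonzero eigenvalue $-1$ and the three-dimensional center space spanned by $\partial_{r_1}$, $\partial_y$ and $(0,1,0,\mu^{-1})$, and then invokes center manifold theory), and your additional care about uniformity along $S_{a,1}$, the invariance identity, and the reduced flow on $C_1$ is a legitimate fleshing-out of that one-line argument. The gap is in the step you assert instead of carry out. Tangency to $(0,1,0,\mu^{-1})$ means $dx_1/d\epsilon_1=\mu$ along the center direction, i.e. $x_1=1+\mu\epsilon_1+\mathcal O(\epsilon_1^2)$, not $1-\mu\epsilon_1+\dots$. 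The order-$\epsilon_1$ balance of your invariance identity gives the same: writing $x_1=\Psi=1+\epsilon_1\psi_1(r_1,y)+\mathcal O(\epsilon_1^2)$, every term on the graph-derivative side $\partial_{r_1}\Psi\,\dot r_1+\partial_y\Psi\,\dot y+\partial_{\epsilon_1}\Psi\,\dot\epsilon_1$ is $\mathcal O(\epsilon_1^2)$, because $\dot r_1$, $\dot y$, $\dot\epsilon_1$ in \eqref{eq.eqmodel0} all carry the factor $(x_1-1)=\mathcal O(\epsilon_1)$ on the graph, while $\dot x_1\vert_{x_1=\Psi}=(1-\Psi)\Psi+\epsilon_1\mu=\epsilon_1(\mu-\psi_1)+\mathcal O(\epsilon_1^2)$; hence $\psi_1\equiv+\mu$. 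Consequently on $C_1$ one gets $\dot\epsilon_1=(1-\Psi)\epsilon_1=-\mu\epsilon_1^2+\mathcal O(\epsilon_1^3)$, so $C_1$ is \emph{inflowing} for $\mu>0$ and \emph{overflowing} for $\mu<0$ — the opposite of what you conclude.

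The deeper point your write-up misses is that the proposition as printed carries this sign slip itself: it contradicts the eigenvector $(0,1,0,\mu^{-1})$ stated immediately before it, and it contradicts the Fenichel expansion of the original system \eqref{eq.kuehnNew}, whose slow manifold is $x=e^{-y^{-1}}+\varepsilon\mu+\mathcal O(\varepsilon^2)$, which in chart \eqref{eq.chartKappa1Model0} restricted to $Q_1$ reads $x_1=1+\mu\epsilon_1+\dots$ (compare Remark \ref{rem.Ma1}, where the analogous consistency check for the $\tanh$ problem is carried out and the signs do match \eqref{eq.Ma1}). Because you matched the claimed output of your computation to the printed statement rather than to the computation itself, your proof inherits the error instead of detecting it. A proof that actually performs the balance would either flag the discrepancy or prove the corrected statement, $M_1:\ x_1=1+\epsilon_1\mu(1+\mathcal O(\epsilon_1))$, with the overflowing/inflowing roles of $\mu>0$ and $\mu<0$ interchanged in \eqref{eq.C10} and the sentence following it. Everything else in your proposal — the uniform spectral splitting along $S_{a,1}$ (the coupling entries $r_1$ and $y^2$ in the Jacobian do not affect the spectrum), the invariance of $\{r_1=y=0\}$, the extension-and-restriction argument for existence, and the non-uniqueness caveat — is sound.
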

This result can be viewed as an extension of \cite[Lemma 5.4]{kuehn2014} to a flat slow manifold. The result is illustrated in Fig. \ref{fig.modelM1}. In the following we present some conclusions from this result. 
\subsection{Conclusion}
The center manifold $M_1$ intersects the two invariant sets $Q_1$ and $E_1$. This intersection $M_1\cap Q_1\cap E_1$ is the extension of Fenichel's slow manifold, being $\mathcal O(e^{-c/\varepsilon})$-close to $S_{a,\varepsilon}$ at $r_1=\rho(\delta)$. It intersects $\{\epsilon_1=\nu\}$ with
\begin{align*}
 x_1 = 1-\nu\mu(1+\mathcal O(\nu)),\quad e^{-y^{-1}}=\varepsilon \nu^{-1},
\end{align*}
using the conservation of $Q_1$ and $E_1$. Blowing back down using \eqref{eq.chartKappa1Model0} we realize that we have extended the slow manifold as a center manifold up to 
\begin{align*}
 x = \varepsilon \nu^{-1}(1-\nu\mu(1+\mathcal O(\nu)),\quad y = \ln^{-1} (\varepsilon^{-1} \nu),
\end{align*}
where it is $\ln^{-1} (\varepsilon^{-1} \nu)$-close to $C_1$ in \eqref{eq.C10}. In other words,
$S_{a,\varepsilon}$ extends as a perturbation of $S_a$ up until a neighborhood of $(x,y)=0$ that scales like
\begin{align}
 (x,y) = \left(\mathcal O(\varepsilon),\mathcal O(\ln^{-1} \varepsilon^{-1})\right).\label{eq.kuehnResultNew}
\end{align}
\begin{remark}
It is interesting to note that letting $n\rightarrow \infty$ formally in \eqref{eq.kuehnResult}, describing the extension of $S_{a,\varepsilon}$ for \eqref{eq.kuehn1}, gives
\begin{align*}
 (x,y) = \left(\mathcal O(\varepsilon),\mathcal O(\varepsilon^{0})\right).
\end{align*}
\end{remark}

\begin{figure}[h!t!]\begin{center}{\includegraphics[width=.99\textwidth]{./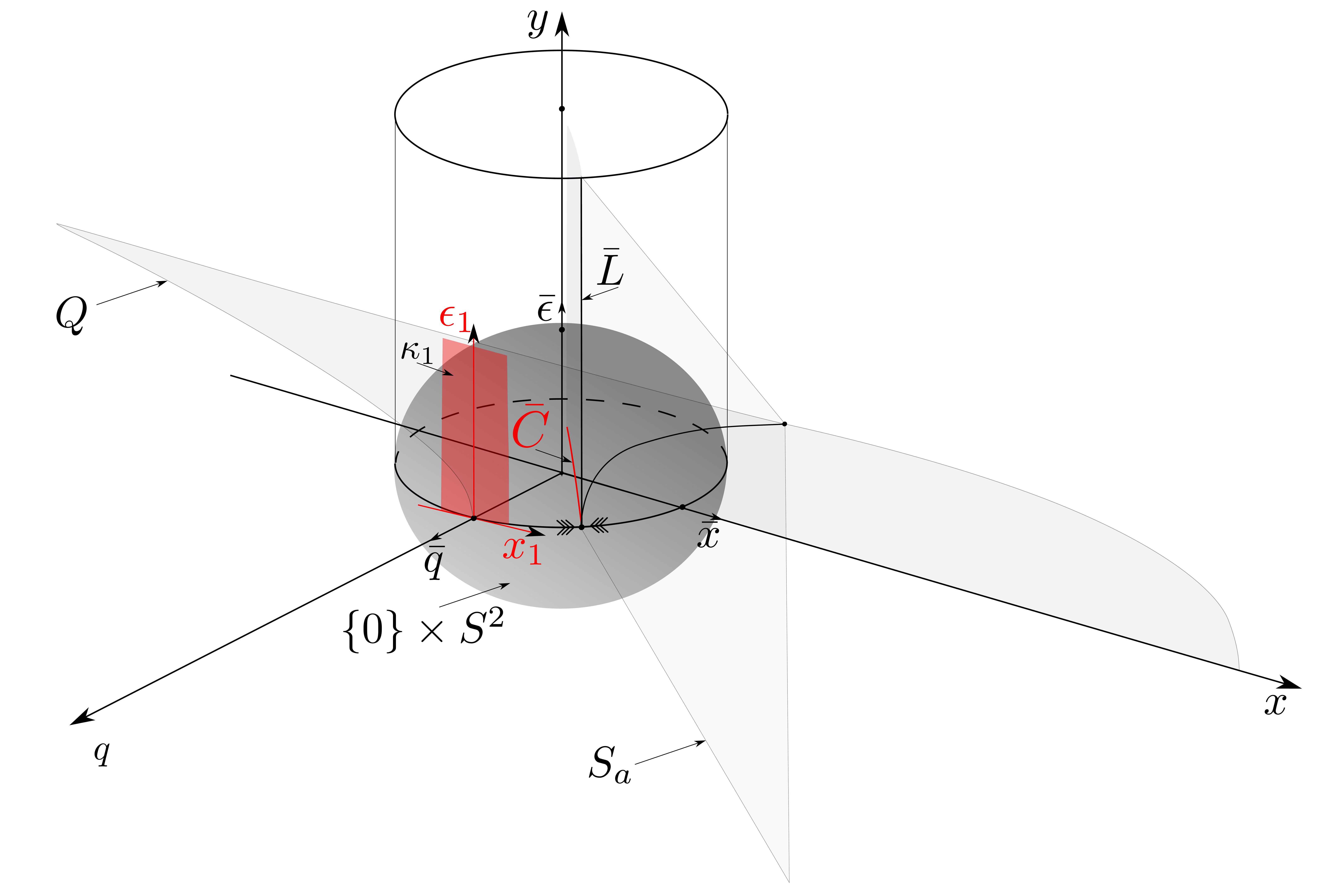}}
\label{fig.modelM1}
\caption{Illustration of the result of Proposition \ref{prop.modelM1} obtained by \mthref{method} and the blowup \eqref{eq.FirstBU}. The transformation \eqref{eq.FirstBU} blows up the line $q=x=\varepsilon=0$ to a cylinder of spheres $(y,(\bar q,\bar x,\bar \epsilon))\in \overline{\mathbb R}_+\times S^2$. We have inserted the sphere $\{0\}\times S^2$ within $y=0$. The blowup enables an extension of $S_a$ onto the blowup sphere $\{0\}\times S^2$ as a normally hyperbolic center manifold $\bar C$ ($C_1$ in chart $\kappa_1$).   }
\end{center}
\end{figure}

To continue $C_1$ across the blowup sphere one may consider the \textit{scaling} chart $$\kappa_2:\quad \bar \varepsilon =1.$$ For system \eqref{eq.eqkuehnq} this chart corresponds to 
 \begin{align*}
  y=y,\,x=r_2x_2,\,q=r_2q_2,\,\varepsilon=r_2.
 \end{align*}
 We will skip the details here for this introductory system and instead focus on our two main examples:
Regularization of PWS systems by $\tanh$ and a model of aircraft ground dynamics, 
 to be considered in the following sections.  However, we will here note that $\epsilon_1 = \nu$ in chart $\bar q=1$ in general, due to the conservation of $\varepsilon$, corresponds to $r_1=o(1)$ with respect to $\varepsilon$. In \eqref{eq.eqkuehnq} we have $r_1=\varepsilon/\nu$ at $\varepsilon=\nu$. Therefore by the invariance of $Q$, we always have $y=o(1)$ (logarithmically with respect to $\varepsilon$ as in \eqref{eq.kuehnResultNew}) in the scaling chart $\kappa_2$ and this will effectively enable the decoupling of $y$ in scaling chart.

\section{Regularization of PWS systems by $\tanh$}\label{sec.tanh}
In this section we shall consider the following planar $(x,y)$ PWS vector-field $X=(X^+,X^-)$ with $X^+=(1,2x)$ defined on $\Sigma^+:\,y>0$:
\begin{align}
 \dot x &=1,\label{eq.yPos}\\
 \dot y&=2x,\nonumber
\end{align}
and $X^-=(0,1)$ defined on $\Sigma^-:\,y<0$:
\begin{align}
\dot x &=0,\label{eq.yNeg}\\
\dot y&=1.\nonumber
\end{align}
This system is a PWS \textit{normal form} for the planar \textit{visible fold}, see \cite[Proposition 3.4]{guardia2011a}.\footnote{In comparison with \cite[Proposition 3.4]{guardia2011a} we have replaced their $(x,y,t)$ by $(2x,2y,2t)$.} The discontinuity set $$\Sigma:\quad y=0,$$ is called the switching manifold and $T=(0,0)\in \Sigma$ is a \textit{visible fold point} since the orbit $y=x^2$ of $X^+$ has a
quadratic tangency with $\Sigma$ at $T$ while $X^-(T)\ne 0$. The point $T\in \Sigma$ divides $\Sigma$ into a (stable) sliding region $$\Sigma_{sl}:\quad x<0,\,y=0$$ and a crossing region $$\Sigma_{cr}:\quad x>0,\,y=0.$$ See Fig. \ref{GeneralSetupR2}. For $p\in \Sigma_{sl}$ the vectors $X^{\pm}(p)$ are in opposition and to continue orbits forward in time one has to define a vector-field $X_{sl}$ on $\Sigma_{sl}$. A natural choice is to follow the Filippov convention and define the sliding vector-field:
\begin{align}
 X_{sl} =\lambda X^++(1-\lambda) X^-,\quad \lambda(x) =\frac{X_2^-(x,0)}{X_2^-(x,0)-X_2^+(x,0)},\label{eq.XSl}
\end{align}
which has a nice geometric interpretation illustrated in Fig. \ref{Filippov}. 
\begin{figure}
\begin{center}
\includegraphics[width=.7\textwidth]{./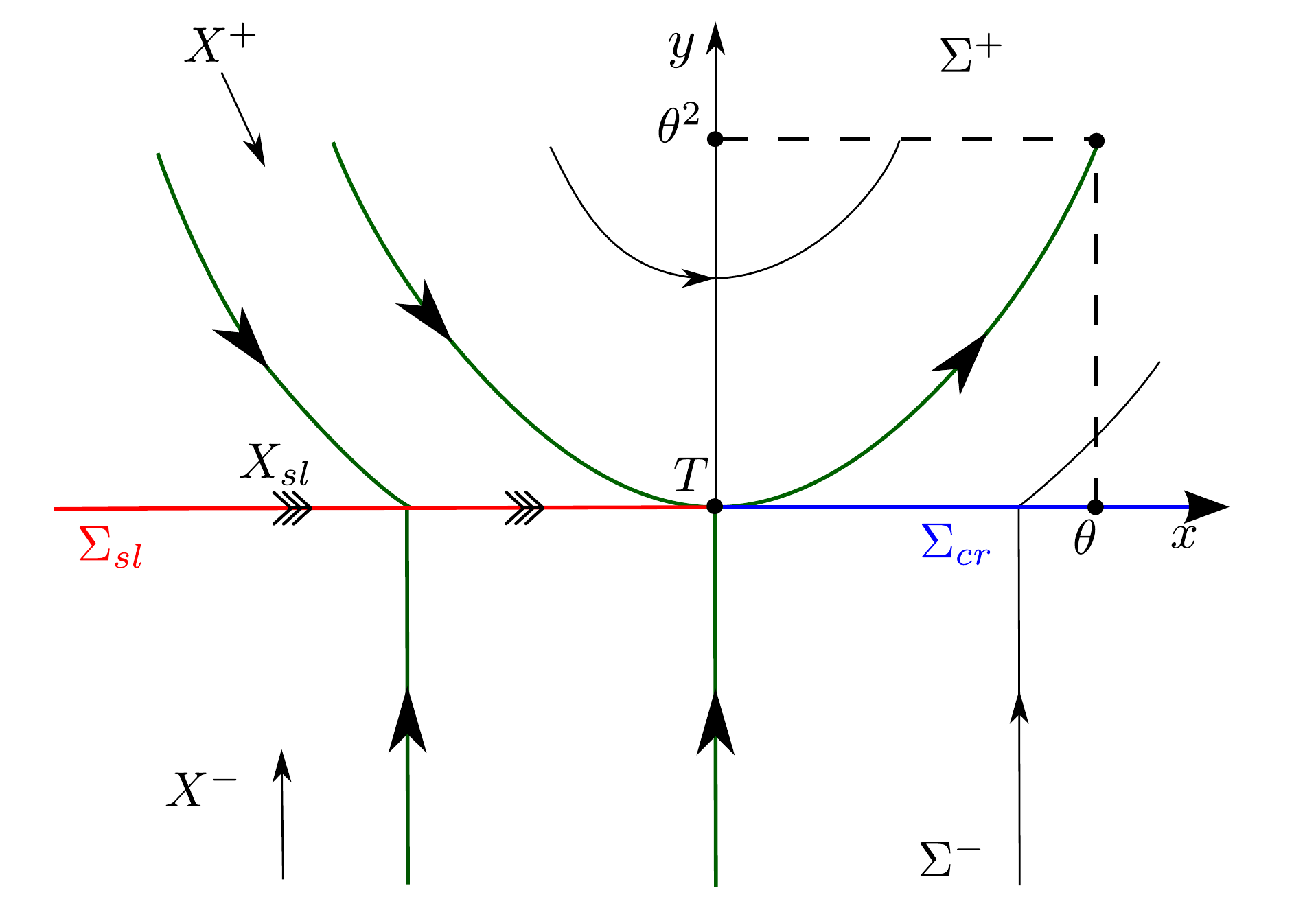}
 \caption{PWS phase portrait of \eqref{eq.yNeg} and \eqref{eq.yPos}.}
\label{GeneralSetupR2}
 \end{center}
              \end{figure}
\begin{figure}
\begin{center}
\includegraphics[width=.7\textwidth]{./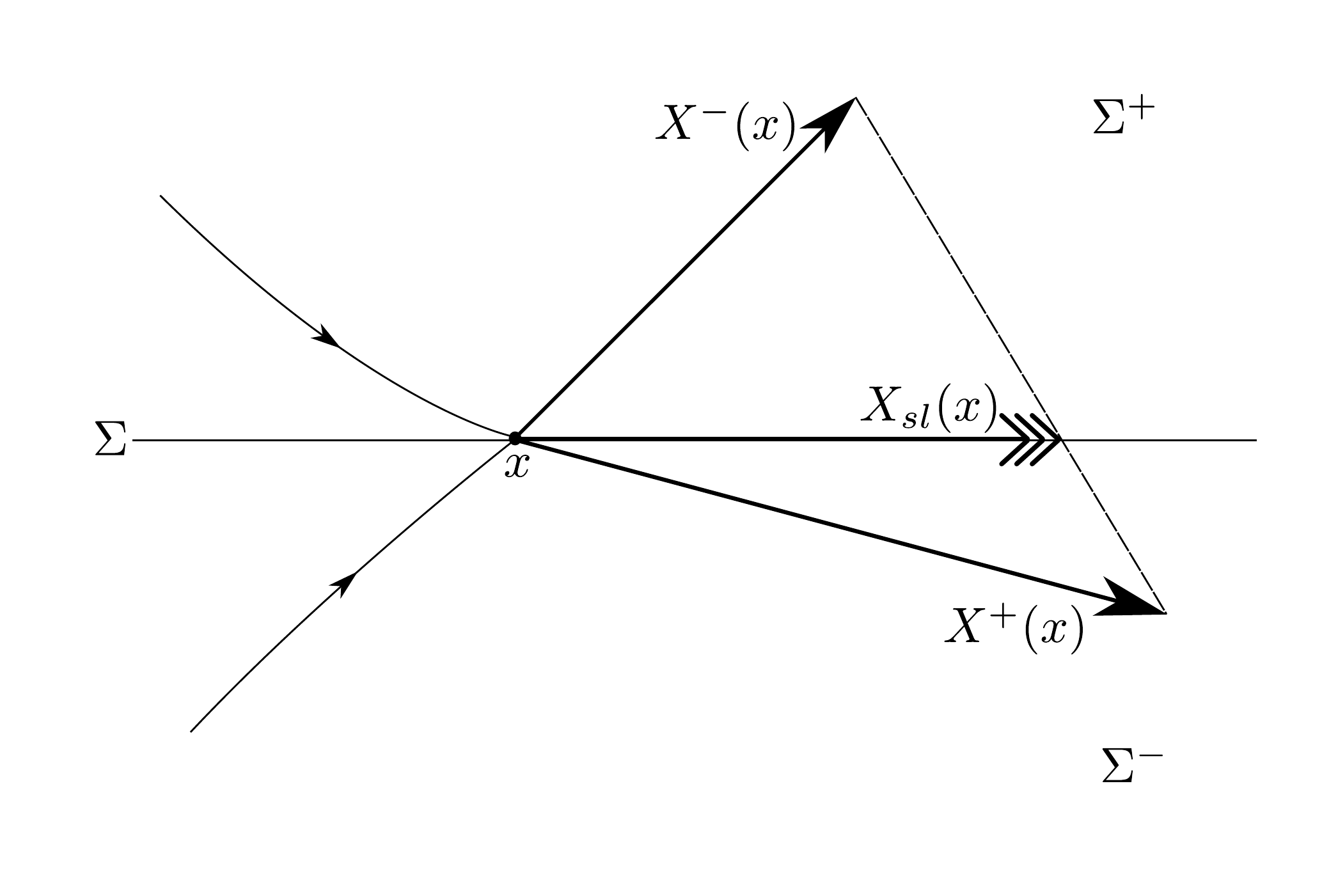}
 \caption{Illustration of the Filippov vector-field \eqref{eq.XSl}.}
\label{Filippov}
 \end{center}
              \end{figure}
              For \eqref{eq.yPos} and \eqref{eq.yNeg} we have $\lambda=\frac{1}{1-2x}$ and
\begin{align}
%
 X_{sl}:\quad \dot x=\frac{1}{1-2x},\,\dot y=0,\quad (x,y)\in \Sigma_{sl}.\label{eq.Filippov}
\end{align}
This gives the PWS phase portrait illustrated in Fig. \ref{GeneralSetupR2}. Notice in particular that all orbits that reach $\Sigma_{sl}$ leave $\Sigma$ at $T$ following $X_{sl}$ and $\{y=x^2\vert x>0\}$.

\subsection{Regularization: Slow-fast analysis}
In \cite{reves_regularization_2014} the authors regularize the PWS system $X=(X^+,X^-)$, described by \eqref{eq.yPos} and \eqref{eq.yNeg}, through a Sotomayor and Teixeira \cite{Sotomayor96} regularization:
\begin{align*}
 X_\varepsilon = \frac12 X^+(1+\phi(y\varepsilon^{-1}))+\frac12 X^- (1-\phi(y\varepsilon^{-1}).
\end{align*}
 \ed{From \eqref{eq.yPos} and \eqref{eq.yNeg} we obtain
\begin{align}
 x' &= \frac12 \varepsilon (1+\phi(y\varepsilon^{-1})),\label{eq.xyPWSFast}\\
 y'&= \varepsilon \left(x (1+\phi(y\varepsilon^{-1}))+\frac12 (1-\phi(y\varepsilon^{-1}))\right),\nonumber
\end{align}
in terms of the fast time $\tau = \varepsilon^{-1} t$. }
Here the function $\phi$ belongs to the following set $C_{ST}^k$ of functions:
 \begin{definition}\label{STphi}
The set $C_{ST}^k$ of Sotomayor and Teixeira regularization functions $\phi$ satisfy:
\begin{itemize}
 \item[$1^\circ$] \textnormal{Finite deformation}: \begin{eqnarray}
 \phi(y)=\left\{\begin{array}{cc}
                 1 & \text{for}\quad y\ge 1,\\
                 \in (-1,1)& \text{for}\quad y\in (-1,1),\\
                 -1 & \text{for}\quad y\le -1,\\
                \end{array}\right.\label{phiFunc}
 \end{eqnarray}
\item[$2^\circ$] \textnormal{Monotonicity}: \begin{align}
 \phi'(y)>0 \quad \text{within}\quad y\in (-1,1).\label{phiProperties}
 \end{align}
 \item[$3^\circ$] \textnormal{Finite $C^k$-smoothness}: $\phi\in C^\infty$ within $y\in (-1,1)$ but there exists a smallest $k\ge 1$ so that $\phi^{(k+1)}$ is discontinuous at $y=\pm 1$: $\phi^{(k+1)}(\pm 1^\mp)\ne 0$.
\end{itemize}
\end{definition}
An example of a $C_{ST}^1$ regularization function within this class is the following function
 \begin{eqnarray}
 \phi(y) = \left\{\begin{array}{ccc}-\frac12 y^3 + \frac32 y& \text{for}& y\in (-1,1),\\
                   \pm 1 & \text{for}&y \gtrless \pm 1.
                  \end{array}\right.
\label{phi1}
 \end{eqnarray}
 Here $\phi^{(1)}(\pm 1^\mp)=0$ but $\phi^{(2)}(\pm 1^\mp)=\mp 3$ (while $\phi^{(2)}(\pm 1^\pm)=0$) and hence $k=1$ in $3^\circ$ of Definition \ref{STphi} for this example. For simplicity, we restrict attention to $k\ge 1$ and therefore exclude consideration of the following $C^0$-function:
 \begin{align*}
  \phi(y) = \left\{\begin{array}{ccc} y & \text{for}& y\in (-1,1),\\
                   \pm 1 & \text{for}&y \gtrless \pm 1.
                  \end{array}\right.
 \end{align*}
 Note that $1^\circ$ also excludes analytic functions such as $\tanh$.

 \ed{Now, whereas \eqref{eq.xyPWSFast} is not slow-fast, setting
 \begin{align}
 \hat y=y\varepsilon^{-1},\label{eq.haty}
 \end{align}
gives a slow-fast system:
\begin{align}
 x'&=\frac{\varepsilon}{2} (1+\phi(\hat y)),\label{eq.xhatySlow}\\
 {\hat{y}}'&=x(1+\phi(\hat y))+\frac12 (1-\phi(\hat y)),\nonumber
\end{align}}
%
%
or
 \begin{align}
  \dot x &=\frac12 (1+\phi(\hat y)),\label{eq.xhaty}\\
 \varepsilon \dot{\hat{y}}&=x(1+\phi(\hat y))+\frac12 (1-\phi(\hat y)),\nonumber
\end{align}
in terms of the slow time $t = \varepsilon \tau$.
Hence the layer problem for this system is
\begin{align}
 x' &=0,\label{eq.layer}\\
 \hat y'&=x(1+\phi(\hat y))+\frac12 (1-\phi(\hat y)),\nonumber
\end{align}
while
\begin{align}
 \dot x &=\frac12 (1+\phi(\hat y)),\label{eq.reduced}\\
 0&=x(1+\phi(\hat y))+\frac12 (1-\phi(\hat y)),\nonumber
\end{align}
 is the reduced problem. 

The Filippov convention appears naturally in mechanics, but it also has the following desirable property:
\begin{theorem}\label{fenichel}
 \cite{krihog,llibre_sliding_2008} The $C_{ST}^k$-regularized system $X_\varepsilon$ possesses an attracting critical manifold $S_a$ for $\varepsilon=0$ which is a graph over $\Sigma_{sl}$. The reduced equations for the slow variable $x$ coincides with Filippov's sliding equations. The critical manifold is nonhyperbolic at $x=0,\,\hat y=1$.
\end{theorem}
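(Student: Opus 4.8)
The plan is to treat Theorem~\ref{fenichel} as a direct computation with the layer and reduced problems \eqref{eq.layer}--\eqref{eq.reduced}, using only the monotonicity in $2^\circ$ and the smoothness in $3^\circ$ of Definition~\ref{STphi}. Write $g(x,\hat y)=x(1+\phi(\hat y))+\tfrac12(1-\phi(\hat y))$ for the fast right-hand side. Since $\phi(\hat y)\in(-1,1)$ for $\hat y\in(-1,1)$ by $1^\circ$, we have $1+\phi(\hat y)>0$ there, so the critical manifold $\{g=0\}$ solves for $x$ as
\begin{align*}
 x = -\frac{1-\phi(\hat y)}{2(1+\phi(\hat y))}, \qquad \hat y\in(-1,1).
\end{align*}
First I would establish the graph property over $\Sigma_{sl}=\{x<0,\ y=0\}$. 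Solving $g=0$ for $\phi$ instead gives $\phi(\hat y)=\tfrac{2x+1}{1-2x}$; since $\phi$ is a strictly increasing bijection of $(-1,1)$ onto $(-1,1)$ by $2^\circ$, and the map $x\mapsto \tfrac{2x+1}{1-2x}$ carries $(-\infty,0)$ bijectively onto $(-1,1)$, the inverse $\hat y=\phi^{-1}\!\left(\tfrac{2x+1}{1-2x}\right)$ exhibits $S_a$ as a graph over $\Sigma_{sl}$.

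Next I would verify the reduced flow. On $\{g=0\}$ the identity above yields $1+\phi(\hat y)=\tfrac{2}{1-2x}$, so the reduced equation $\dot x=\tfrac12(1+\phi(\hat y))$ from \eqref{eq.reduced} becomes $\dot x=\tfrac{1}{1-2x}$, which is exactly the Filippov sliding field \eqref{eq.Filippov}. Attractivity and normal hyperbolicity then follow from the single nontrivial eigenvalue of the layer problem, namely $\partial_{\hat y}g=(x-\tfrac12)\phi'(\hat y)$. On $S_a$ one has $x<0<\tfrac12$, while $\phi'(\hat y)>0$ by $2^\circ$, so this eigenvalue is strictly negative for $\hat y\in(-1,1)$; hence $S_a$ is attracting.

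Finally, the fold: at $(x,\hat y)=(0,1)$ the eigenvalue equals $-\tfrac12\phi'(1)$. Because $k\ge 1$ and $\phi\equiv 1$ on $\hat y\ge 1$, the $C^k$-matching in $3^\circ$ forces $\phi^{(j)}(1)=0$ for $1\le j\le k$, in particular $\phi'(1)=0$, so the eigenvalue vanishes and $S_a$ is nonhyperbolic there. I expect the only delicate point to be precisely this matching at $\hat y=1$: one must invoke $3^\circ$ to conclude $\phi'(1)=0$ and to read off the vanishing order (governed by $k$, with $\phi'(\hat y)\sim (\hat y-1)^{k}$), since it is this algebraic degeneracy of the fold—and its flat analogue for $\tanh$—that motivates the blowup constructions developed in the sequel. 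Everything else reduces to the elementary algebra above.

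\epr
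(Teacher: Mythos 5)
Your proposal is correct and follows essentially the same route as the paper's proof: solve $g=0$ for $\phi(\hat y)=\frac{1+2x}{1-2x}$ to get the graph over $\Sigma_{sl}$, linearize the layer problem to get the eigenvalue $(x-\tfrac12)\phi'(\hat y)<0$ for attractivity, substitute back into \eqref{eq.reduced} to recover the Filippov field $\dot x=\frac{1}{1-2x}$, and use $\phi'(1)=0$ for the loss of hyperbolicity at $(0,1)$. If anything, you are more explicit than the paper on why $\phi'(1)=0$ (the $C^k$-matching with the constant extension, which uses $1^\circ$ together with $3^\circ$) — note only that the paper's remark following its proof emphasizes that the graph/attractivity/Filippov parts avoid $1^\circ$ so as to extend to functions like $\tanh$, whereas your write-up invokes $1^\circ$ for the bijection onto $(-1,1)$; this does not affect correctness for the theorem as stated.
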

\begin{proof}
 We demonstrate this result by considering our model system \eqref{eq.xhaty}. The critical manifold:
 \begin{align}
  \phi(\hat y) = \frac{1+2x}{1-2x},\label{eq.phiSol}
 \end{align}
 is only a graph over $\overline{\Sigma}_{sl}$; within $\Sigma_{cr}$ the expression on the right hand side becomes $\gtrless \pm 1$. By linearizing \eqref{eq.layer} about \eqref{eq.phiSol} we realise that the manifold \eqref{eq.phiSol} is attracting within $\Sigma_{sl}$ but nonhyperbolic at $x=0$ since $\phi'(1)=0$ there, cf. $3^\circ$ of Definition \ref{STphi}. 
 
 The reduced equations for the slow variable then becomes
 \begin{align}
  \dot x &=\frac12 (1+\phi(\hat y)) = \frac{1}{1-2x},\label{eq.reducedxDot}
 \end{align}
 using \eqref{eq.phiSol}. This expression coincides with \eqref{eq.Filippov}. 
\end{proof}
The proof does not use $1^\circ$ and $k<\infty$ in $3^\circ$ of Definition \ref{STphi}. This result therefore applies to a larger set of functions $\phi$, including analytic functions such as $\tanh$, satisfying:
\begin{align}
 \phi(\hat y)\in [-1,1],\,\, \phi'(\hat y) >0\,\, \text{for}\,\, \phi^{-1}(\hat y)\in (-1,1),\,\,\, \phi(\hat y) \rightarrow \pm 1\quad \text{for}\quad \hat y \rightarrow \pm \infty.\label{eq.regClass}
\end{align}

By Fenichel's theory, compact subsets of $S_a$ perturb to slow invariant manifolds $S_{a,\varepsilon}$ for $\varepsilon$ sufficiently small. The flow on $S_{a,\varepsilon}$ converges to the flow of the reduced problem for $\varepsilon\rightarrow 0$. The authors in \cite{reves_regularization_2014} investigate,  among other things, the intersection of $S_{a,\varepsilon}$ with a fixed section $\{y=\theta\}$ for $\phi\in C_{ST}^k$, $0\le k<\infty$. We find it easier to study the intersection of $S_{a,\varepsilon}$ with $\{x=\theta\}$ rather than $\{y=\theta\}$, but essentially the result of \cite[Theorem 2.2]{reves_regularization_2014} for $k\ge 1$ is the following:
\begin{theorem}\label{thm.Bonet}
 Consider $\phi\in C_{ST}^{n-1}$, $n\ge 2$. Let 
 \begin{align}
  r_2 = \varepsilon^{1/(2n-1)}.\label{r2Bonet}
 \end{align}
Then the slow manifold $S_{a,\varepsilon}$ intersects $\{x=\theta\}$ in $(\theta,y_\theta(\varepsilon))$ with
 \begin{align}
  y_\theta(\varepsilon) = \theta^2+\varepsilon-r_2^{2n} \left(\frac{2}{\phi^{[n]}}\right)^{2/(2n-1)} \eta(n)^2(1+r_2 F(r_2)),\label{eq.yTheta0}
 \end{align}
where 
\begin{itemize}
\item $F$ is smooth;
\item $\phi^{[n]}=\frac{(-1)^{n+1}}{n!}\phi^{(n)}(1)>0$;
\item $\eta(n)>0$ is a positive constant depending only on $n$. 
\end{itemize}
Also let $\rho,\,\nu>0$. Then the mapping $P_\varepsilon$ from $\{x=-\rho,\,y\in[-\nu, \nu]\}$ to $\{x=\theta\}$, obtained from the forward flow, satisfies $P_\varepsilon(y) = y_\theta(\varepsilon)+\mathcal O(e^{-c/\varepsilon})$, $P_\varepsilon'(y) = \mathcal O(e^{-c/\varepsilon})$. 
\end{theorem}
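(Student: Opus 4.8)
The plan is to reduce the computation of $y_\theta(\varepsilon)$ to a single geometric quantity and then extract that quantity by a weighted blowup of the degenerate fold. First I would localise near the nonhyperbolic point $(x,\hat y)=(0,1)$ by setting $u=1-\hat y$ and using $3^\circ$ of Definition \ref{STphi} to write $\phi(\hat y)=1-\phi^{[n]}u^n+\mathcal O(u^{n+1})$, with $\phi^{[n]}>0$, which is smooth on the layer side $\hat y<1$. In fast time the system \eqref{eq.xhatySlow} then reads, to leading order,
\begin{align*}
 x'=\varepsilon+\mathcal O(\varepsilon u^n),\qquad u'=-2x-\tfrac12\phi^{[n]}u^n+\text{h.o.t.},\qquad \varepsilon'=0,
\end{align*}
so the critical manifold $2x+\tfrac12\phi^{[n]}u^n=0$ has an $n$-th order contact with $x=0$ at $u=0$, a degenerate fold whose nontrivial eigenvalue $\sim-\tfrac12 n\phi^{[n]}u^{n-1}$ vanishes there. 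The crucial reduction is that once the orbit leaves the switching layer at $\hat y=1$, i.e. at $y=\varepsilon$, the field is \emph{exactly} $X^+$ (since $\phi\equiv1$), so $y=x^2+c$ with $c=\varepsilon-x_{\mathrm{exit}}^2$, where $x_{\mathrm{exit}}$ is the $x$-value at which $S_{a,\varepsilon}$ crosses $\hat y=1$. Hence
\begin{align*}
 y_\theta(\varepsilon)=\theta^2+\varepsilon-x_{\mathrm{exit}}^2,
\end{align*}
and the whole theorem comes down to computing $x_{\mathrm{exit}}$ through the fold.

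To do so I would blow up $(x,u,\varepsilon)=(0,0,0)$ with the quasihomogeneous weights forced by the balances in the $u$- and $x$-equations, namely
\begin{align*}
 x=r^n\bar x,\qquad u=r\bar u,\qquad \varepsilon=r^{2n-1}\bar\varepsilon,\qquad (\bar x,\bar u,\bar\varepsilon)\in S^2,
\end{align*}
and desingularise by dividing out $r^{n-1}$; this is consistent with $r_2=\varepsilon^{1/(2n-1)}$ in \eqref{r2Bonet}. In the scaling chart $\kappa_2$ ($\bar\varepsilon=1$) one obtains the $\varepsilon$-independent planar field $x_2'=1$, $u_2'=-2x_2-\tfrac12\phi^{[n]}u_2^n$, which after a linear rescaling $u_2=aU$, $x_2=bX$ with $b^{2n-1}\propto(\phi^{[n]})^{-1}$ normalises to the model equation $U'=-X-U^n$. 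This equation has a unique distinguished solution $\gamma_n$ backward asymptotic to the attracting branch $U\sim(-X)^{1/n}$ as $X\to-\infty$ (uniqueness because the transverse linearisation $-nU^{n-1}$ is contracting forward, hence expanding backward). Since $U'\le-X<0$ for $X>0$, $\gamma_n$ crosses $\{U=0\}$ at a finite $X=\eta(n)>0$ depending only on $n$. As $\hat y=1$ is $u=0$, i.e. $u_2=0$, this crossing is exactly $x_{\mathrm{exit}}=r_2^n x_2^\ast=r_2^n\,b\,\eta(n)$, so $x_{\mathrm{exit}}^2=r_2^{2n}b^2\eta(n)^2$; absorbing the $n$-dependent powers of two in $b^2$ into $\eta(n)$ turns $b^2$ into $(2/\phi^{[n]})^{2/(2n-1)}$ and reproduces the correction term in \eqref{eq.yTheta0}.

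It remains to justify that $S_{a,\varepsilon}$ really enters the scaling chart along $\gamma_n$ and to produce the remainder and contraction estimates. In the entry chart $\kappa_1$ ($\bar u=1$) the blown-up attracting critical manifold is normally hyperbolic at $r_1=\varepsilon_1=0$, so Fenichel's theorem extends $S_{a,\varepsilon}$ into the chart with exponential contraction; matching as $r_1\to0$ with the backward-asymptotic solution selects $\gamma_n$ uniquely, and an exit chart $\kappa_3$ ($\bar x=1$) carries it out to $x>0$. The neglected higher-order terms (the $\mathcal O(u^{n+1})$ in $\phi$ and the $\mathcal O(\varepsilon u^n)$ in $x'$) perturb $\gamma_n$, hence $x_2^\ast$, smoothly in $r_2$, yielding the factor $(1+r_2F(r_2))$ with $F$ smooth. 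Finally, the slow passage from $\{x=-\rho\}$ to the fold region contributes a contraction exponent of order $-c/\varepsilon$, so every initial $y\in[-\nu,\nu]$ is funnelled onto $S_{a,\varepsilon}$ with spread and derivative $\mathcal O(e^{-c/\varepsilon})$, giving the stated estimates for $P_\varepsilon$ and $P_\varepsilon'$.

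The main obstacle is the analysis of the model equation $U'=-X-U^n$ for general $n$: unlike the classical fold $n=2$ (a Riccati/Airy equation with an explicit constant), there is no closed form, so establishing existence, uniqueness and the finite $U=0$ crossing of $\gamma_n$ — and thereby the well-definedness and positivity of $\eta(n)$ — requires a shooting or sub/super-solution argument. A secondary difficulty is making the contraction estimate uniform through the fold, where normal hyperbolicity is lost; this is precisely what the blowup restores, via exchange-lemma-type estimates in $\kappa_1$ and $\kappa_3$.
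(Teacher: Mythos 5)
Your proposal follows essentially the same route as the paper's proof in Appendix \ref{appA}: the same reduction $y_\theta(\varepsilon)=\theta^2+\varepsilon-x_{\mathrm{exit}}^2$ exploiting that the flow equals $X^+$ beyond $\hat y=1$, the same quasihomogeneous blowup of $(x,\tilde y,\varepsilon)=0$ with weights $(n,1,2n-1)$, the same entry-chart center-manifold extension and scaling-chart normalisation to $\dot u=1$, $\dot v=-u-v^n$, whose crossing of $\{v=0\}$ defines $\eta(n)$. The only differences are cosmetic: the paper first divides out $1+\phi(\hat y)$ so that $\dot x=\varepsilon$ holds exactly, and it obtains the model-equation constant $\eta(n)$ by citing \cite[Proposition 3.10]{reves_regularization_2014}, where you instead sketch a (valid) comparison/shooting argument.
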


\begin{proof}
 This result is to order $\varepsilon$ obtained by setting $x_0^+=\sqrt{y}$, $\alpha^+=-1/\sqrt{y}$, in the expression for $P_\varepsilon(x)$ in the second point of the \ed{itemized list} in  \cite[Theorem 2.2]{reves_regularization_2014}, and solving $P_\varepsilon(x)=\theta$ for $y$. The authors in \cite{reves_regularization_2014} use asymptotic methods.  We will in Appendix \ref{appA} demonstrate an alternative proof using the blowup method which will give rise to the complete expression in \eqref{eq.yTheta0}. 
\end{proof}


\subsection{Geometry of regularization}
In this paper we shall extend Theorem \ref{thm.Bonet} to the description of the intersection of $S_{a,\varepsilon}$ with $\{x=\theta\}$ for the analytic regularization function $$\phi(\hat y) = \tanh(\hat y).$$ In terms of the application of slow-fast theory, this adds a significant amount of complexity since here $\phi'>0$ for all $\hat y\in \mathbb R$. This implies, in contrast to the case of $C_{ST}^k$-functions, that the critical manifold loses hyperbolicity at infinity $\hat y\rightarrow \infty$ (rather than at $\hat y=1$). To handle this, it is useful to consider the scaling $y=\varepsilon \hat y$ in \eqref{eq.haty} as part of a blowup:
\begin{align}
 y=\pi \bar y, \quad \varepsilon = \pi\bar \varepsilon,\quad \pi\ge 0,\,(\bar y,\bar\varepsilon)\in S^1,\label{eq.blowup0}
\end{align}
of $x\in \mathbb R,\,y=\varepsilon=0$.  
Then $y=\varepsilon \hat y$ becomes a scaling chart $$\bar\varepsilon=1:\quad y=\hat \pi\hat y,\,\,\varepsilon=\hat \pi.$$ In the $(x,\hat y)$-system the layer problem has $\dot x=0$ for $\varepsilon=0$. The PWS system is therefore not visible in this chart for $\varepsilon=0$. To connect to the PWS system, one can consider the two charts $$\bar y=\pm 1:\quad y=\pm \hat \pi,\,\,\varepsilon=\hat \pi\hat \varepsilon.$$ to cover $\bar y>0$ and $\bar y<0$, respectively, of the sphere $(\bar y,\bar \varepsilon)\in S^1$. See Fig. \ref{fig.New}.

\begin{figure}[h!t!]\begin{center}{\includegraphics[width=.65\textwidth]{./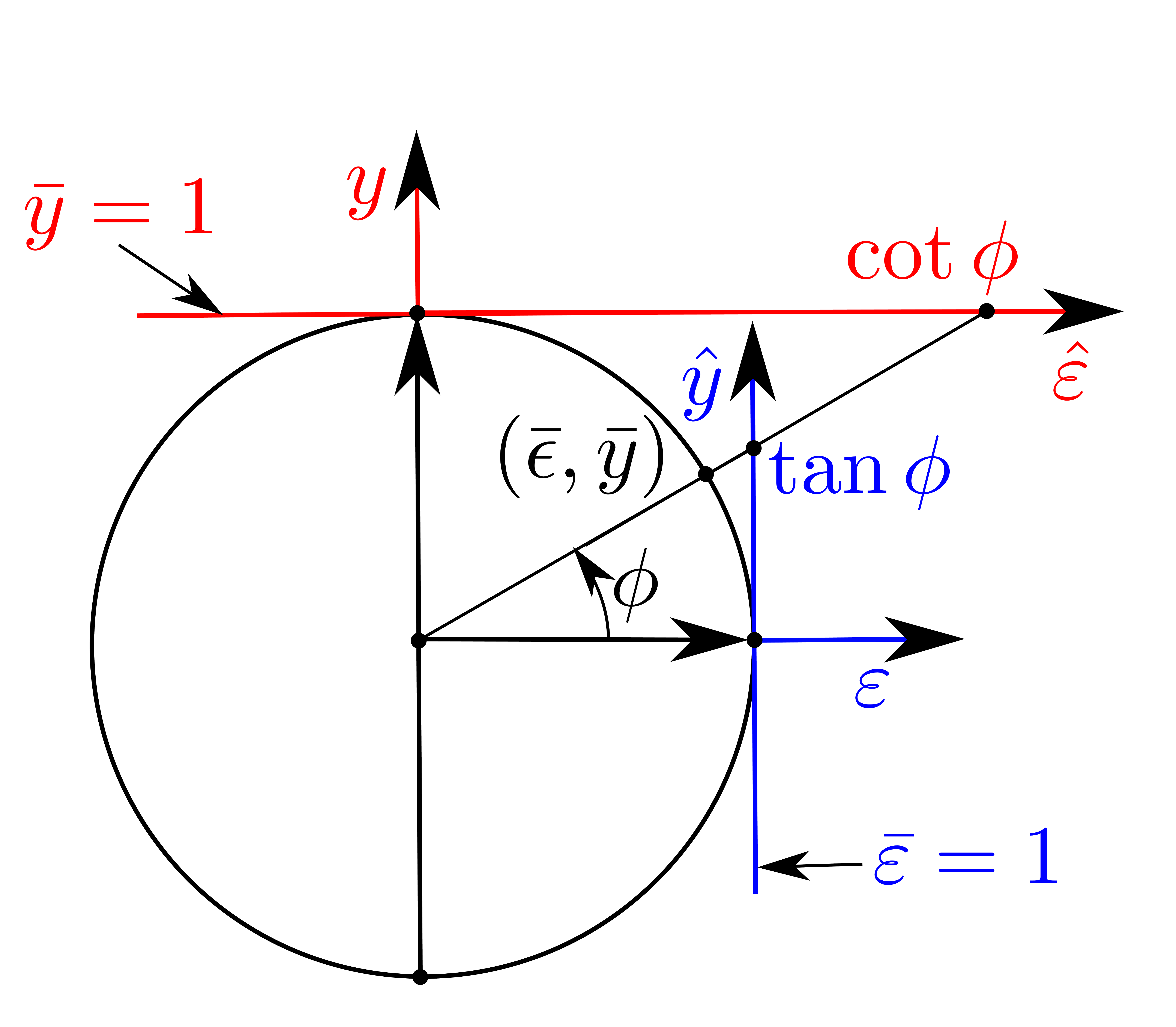}}
\caption{Illustration of the blowup \eqref{eq.blowup0} and the charts $\bar \epsilon=1$ and $\bar y=1$.}
\label{fig.New}
\end{center}
\end{figure}

The advantage of the $C_{ST}^k$-functions is that the charts $\bar y=\pm 1$ are not needed. Indeed, if $\phi\in C_{ST}^k$ then $\phi(\hat y)=\pm 1$ for $\hat y\gtrless \pm 1$ cf. $1^\circ$ in Definition \ref{STphi}, whence
\begin{align*}
 X_\varepsilon = X^{\pm} \quad \text{for}\quad y\gtrless \pm \varepsilon.
\end{align*}
 Therefore we can just \textit{scale back down} and return to $y$ (as it was done in \cite{krihog2}) whenever $\hat y\notin (-1,1)$ (corresponding to $y\notin (-\varepsilon,\varepsilon)$ using \eqref{eq.haty}). This then leads to the following interpretation of phase space in the case of $C_{ST}^k$-functions: We continue orbits of $X^\pm$ that reach $y=\pm \varepsilon$, respectively, within $\hat y\in (-1,1)$ using \eqref{eq.xhatySlow}. Once an orbit of \eqref{eq.xhatySlow} reaches $\hat y=\pm 1$ again then this orbit can be continued using the PWS vector-fields $X^{\pm}$ from $y=\pm \varepsilon$, respectively. This also leads to a (singular) description for $\varepsilon=0$. Geometrically, it corresponds to blowing up the plane $y=0$ to $\hat y\in [-1,1]$ for $\varepsilon=0$. See Fig. \ref{GeneralSetupR2Eps0}.
 \begin{figure}
\begin{center}
\includegraphics[width=.7\textwidth]{./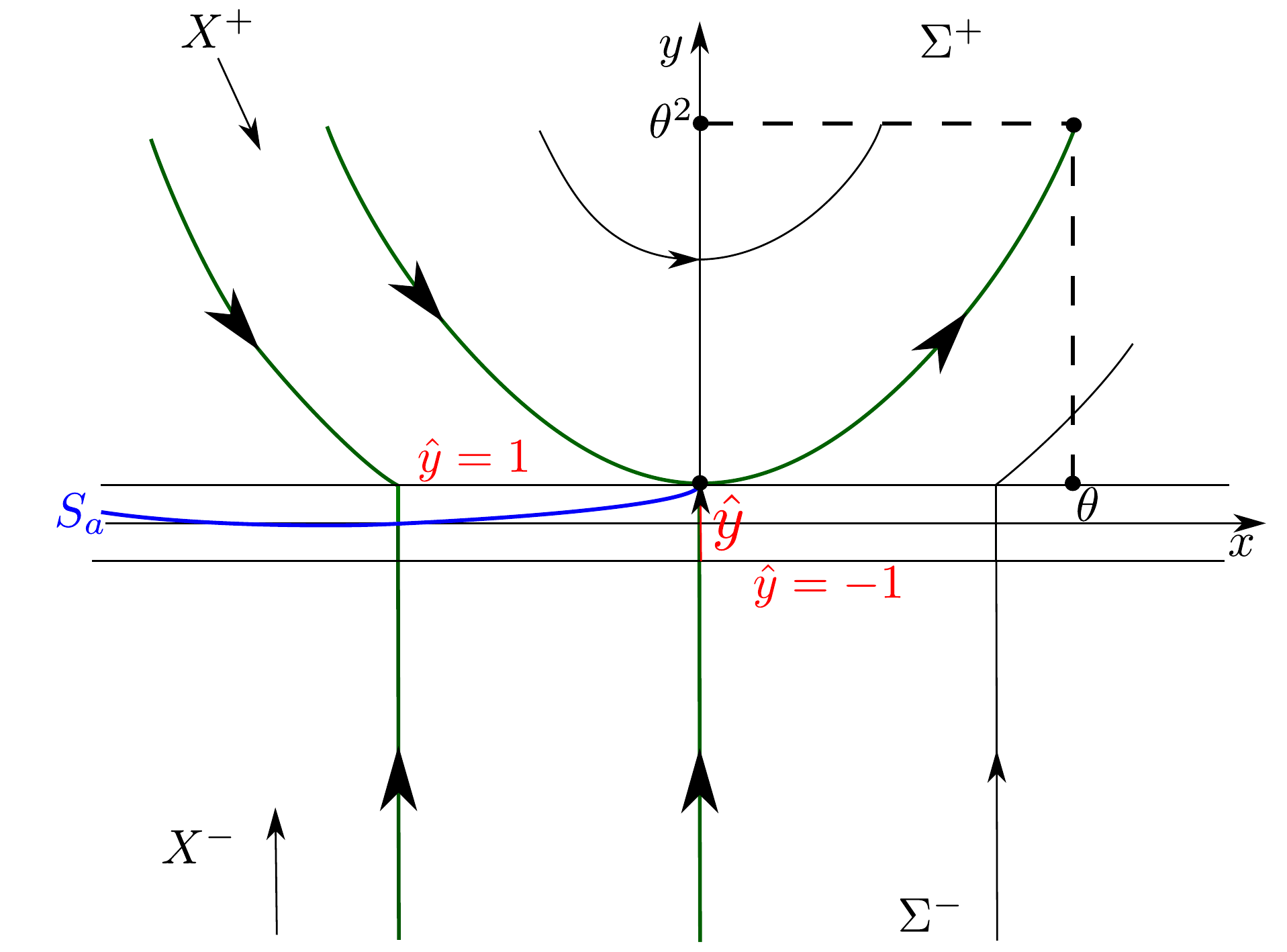}
 \caption{Singular slow-fast geometry for $\phi\in C_{ST}^k$.}
\label{GeneralSetupR2Eps0}
 \end{center}
              \end{figure}

\subsection{Regularization by $\tanh$}
For regularization functions such as $\tanh$ within the class 
\eqref{eq.regClass}, the compactification by $\hat y\in [-1,1]$ in Fig. \ref{GeneralSetupR2Eps0} is not possible and we need the charts $\bar y=\pm 1$ to connect $y=\mathcal O(\varepsilon)$ with $y=\mathcal O(1)$. 
But furthermore, which is relevant for our purposes, $\tanh(\hat y)$ is \textit{flat} for $\hat y\rightarrow \pm \infty$. Indeed, let $\hat y=\hat \varepsilon^{-1}$. Then
\begin{align}
\tanh(\hat \varepsilon^{-1})=1-\frac{2e^{-2\hat \varepsilon^{-1}}}{1+e^{-2\hat \varepsilon^{-1}}},\label{eq.asymptoticPhi}
\end{align}
with all derivatives at $\hat \varepsilon=0$ vanishing. \ed{We will in the following sections \ref{sec.bareps1} and \ref{sec.pwsChartYBarEq1} first demonstrate the use of the charts $\bar \varepsilon=1$ and $\bar y=1$. In particular in section \ref{sec.pwsChartYBarEq1}, where we describe the chart $\bar y=1$, we show that \eqref{eq.asymptoticPhi} gives rise to a flat critical manifold.} \ed{For simplicity we eliminate time through the division of $\frac12 (1+\phi(y\varepsilon^{-1}))$ and consider the following system
\begin{align}
 \dot x &=\varepsilon,\label{eq.tanhNewxy}\\
 \dot y &=\varepsilon \left(2x+\frac{1-\tanh(y\varepsilon^{-1})}{1+\tanh(y \varepsilon^{-1})}\right).\nonumber
\end{align}}

\subsection{Chart $\bar\varepsilon=1$}\label{sec.bareps1}
Inserting \eqref{eq.haty} into \eqref{eq.tanhNewxy} gives the equations 
\begin{align}
 \dot x&= \varepsilon ,\label{eq.tanhNew}\\
 \dot{\hat y}&=2x+\frac{1-\tanh(\hat y)}{1+\tanh(\hat y)} = 2x + e^{-2\hat y}.\nonumber
\end{align}
In accordance with Theorem \ref{fenichel}, this system possesses an attracting critical manifold $S_a$ as a graph over $\Sigma_{sl}$. By Fenichel's theory compact subsets of this manifold perturbs to an invariant slow manifold $S_{a,\varepsilon}$ for $\varepsilon$ sufficiently small. A simple computation shows the following:
\begin{lemma}\label{lemmaSaEpsTanh}
For $0<\varepsilon\ll 1$ the attracting slow manifold $S_{a,\varepsilon}$ intersects $\{\hat y=\xi^{-1}\}$, with $\xi$ small but fixed, in $(x_\xi(\varepsilon),\xi^{-1})$ with
\begin{align}
 x_\xi(\varepsilon)&=-\frac12 e^{-2\xi^{-1}} + \frac{\varepsilon}{2} e^{2\xi^{-1}}+\mathcal O(\varepsilon^2).\label{eq.xdelta}
\end{align}
\end{lemma}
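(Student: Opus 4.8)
The plan is to realize $S_{a,\varepsilon}$ as a graph $x=m(\hat y,\varepsilon)$ over the $\hat y$-axis and to extract $x_\xi(\varepsilon)=m(\xi^{-1},\varepsilon)$ from the invariance equation by a regular expansion in $\varepsilon$. First I would observe that, for $\xi$ small but \emph{fixed}, the relevant portion of $S_a$ lies in a compact $\hat y$-interval ending at $\xi^{-1}$, on which the nontrivial eigenvalue $-2e^{-2\hat y}$ of the layer problem \eqref{eq.layer} stays strictly negative, its magnitude being bounded below by the fixed number $2e^{-2\xi^{-1}}>0$. Hence $S_a$ is normally hyperbolic there and Fenichel's theorem supplies a locally invariant slow manifold $S_{a,\varepsilon}$ that is a smooth graph $x=m(\hat y,\varepsilon)$ with $m(\hat y,0)=-\frac12 e^{-2\hat y}$ and with $m$ jointly smooth in $(\hat y,\varepsilon)$; in particular $m$ admits a Taylor expansion in $\varepsilon$ with remainder uniform on the interval.

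Next I would write down the invariance condition. Since along $S_{a,\varepsilon}$ the flow of \eqref{eq.tanhNew} must be tangent to the graph, I have $\dot x=m_{\hat y}\,\dot{\hat y}$, that is
\begin{align*}
 \varepsilon = m_{\hat y}(\hat y,\varepsilon)\,\bigl(2m(\hat y,\varepsilon)+e^{-2\hat y}\bigr).
\end{align*}
Substituting the ansatz $m=m_0+\varepsilon m_1+\mathcal O(\varepsilon^2)$ and collecting powers of $\varepsilon$: at order $\varepsilon^0$ the equation reads $0=m_{0,\hat y}\,(2m_0+e^{-2\hat y})$, and selecting the attracting critical branch $S_a$ forces $2m_0+e^{-2\hat y}=0$, i.e. $m_0=-\frac12 e^{-2\hat y}$ (the alternative factor $m_{0,\hat y}=0$ corresponds to the fast fibers of the layer problem, not to $S_{a,\varepsilon}$). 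At order $\varepsilon^1$ the contribution carrying $m_{1,\hat y}$ is multiplied by $2m_0+e^{-2\hat y}=0$ and drops out, leaving $1=2\,m_{0,\hat y}\,m_1=2e^{-2\hat y}m_1$, whence $m_1=\frac12 e^{2\hat y}$.

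Finally I would evaluate the expansion at $\hat y=\xi^{-1}$, obtaining
\begin{align*}
 x_\xi(\varepsilon)=m(\xi^{-1},\varepsilon)=-\frac12 e^{-2\xi^{-1}}+\frac{\varepsilon}{2}e^{2\xi^{-1}}+\mathcal O(\varepsilon^2),
\end{align*}
which is precisely \eqref{eq.xdelta}.

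I do not anticipate a genuine obstacle here; this is the ``simple computation'' promised before the statement of the lemma. The one point requiring care is the justification that the $\mathcal O(\varepsilon^2)$ remainder is legitimate and uniform, which rests on the smoothness of the Fenichel graph on the fixed compact interval, and that in turn needs the eigenvalue $-2e^{-2\hat y}$ to remain bounded away from zero all the way up to $\hat y=\xi^{-1}$. Keeping $\xi$ fixed (not shrinking with $\varepsilon$) is exactly what makes this region normally hyperbolic and the expansion valid; the flat degeneracy as $\hat y\rightarrow\infty$, which is handled later through the charts $\bar y=\pm 1$, never enters at this stage.
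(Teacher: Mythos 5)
Your proposal is correct and is essentially the paper's own argument: the paper's proof simply invokes Fenichel's theorem (the slow manifold being $\mathcal O(\varepsilon)$-smoothly close to $S_a$ on the compact, normally hyperbolic region ending at $\hat y=\xi^{-1}$) and defers to "a simple computation," which is exactly the invariance-equation expansion $\varepsilon = m_{\hat y}\,(2m+e^{-2\hat y})$ with $m_0=-\frac12 e^{-2\hat y}$, $m_1=\frac12 e^{2\hat y}$ that you carried out. Your write-up just makes that computation and the uniformity of the $\mathcal O(\varepsilon^2)$ remainder explicit.
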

\begin{proof}
 The critical manifold intersects $\{y=\xi^{-1}\}$ in $(x_\xi(0),\xi^{-1})$. Since $S_{a,\varepsilon}$ is $\mathcal O(\varepsilon)$-smoothly-close to $S_a$ the result follows from a simple computation. 
\end{proof}
Clearly $x$ increases (and therefore also $\hat y$) on $S_{a,\varepsilon}$ for $0<\varepsilon\ll  1$. To continue $S_{a,\varepsilon}$ near $\hat y=\infty$ we consider the chart $\bar y=1$.
\subsection{Chart $\bar y=1$}\label{sec.pwsChartYBarEq1}
This chart corresponds to setting $\bar y=1$ in \eqref{eq.blowup0}:
\begin{align*}
 y=\hat \pi,\quad \varepsilon = \hat \pi \hat \varepsilon,
\end{align*}
or simply:
\begin{align}
 \hat \varepsilon = y^{-1} \varepsilon=\hat y^{-1}.\label{eq.hatepsNew}
\end{align}
Inserting this into \eqref{eq.tanhNewxy} gives the following system of equations
\begin{align}
 \dot x &= \varepsilon=\hat \varepsilon y,\label{eq.baryEqn}\\
 \dot{ y} &= \varepsilon \left(2x+e^{-2\hat \varepsilon^{-1}}\right)=\hat \varepsilon y\left(2x+e^{-2\hat \varepsilon^{-1}}\right),\nonumber\\
 \dot{\hat \varepsilon} &=-y^{-2} \varepsilon \dot y =-\hat \varepsilon^2 \left(2x+e^{-2\hat \varepsilon^{-1}}\right).\nonumber
\end{align}
\ed{(We avoid the natural desingularization of $\hat \varepsilon=0$ through division by $\hat \varepsilon$ on the right hand side, since we would have to undo it, once we apply our method \mthref{method} in section \ref{sec.proofPWS}, for an extended system to be well-defined at $\hat \varepsilon=0$.)}
Setting $y=0$ in \eqref{eq.baryEqn} gives a new layer problem:
\begin{align}
 \dot x &= 0,\label{eq.layerbary}\\
 \dot{ y} &= 0,\nonumber\\
 \dot{\hat \varepsilon} &=-\hat \varepsilon^2 \left(2x+e^{-2\hat \varepsilon^{-1}}\right),\nonumber
\end{align}
for which the critical manifold $S_a$ from the chart $\bar \varepsilon=1$ becomes
\begin{align}
 x = -\frac12 e^{-2\hat \varepsilon^{-1}},\,y=0,\,\hat \varepsilon>0;\label{eq.criticalbary}
\end{align}
again a set of fix-points of \eqref{eq.layerbary}.
%
In agreement with the analysis in chart $\bar\varepsilon=1$ this set is normally attracting for $x<0$ but since \eqref{eq.criticalbary} is flat as a graph over $\hat \varepsilon\ge 0$ it loses hyperbolicity at $(x,y,\hat \varepsilon)=(0,0,0)$ at an exponential rate as $\hat \varepsilon\rightarrow 0^+$. Indeed, the linearization of \eqref{eq.layerbary} about \eqref{eq.criticalbary} gives
\begin{align}
-{2e^{-2\hat \varepsilon^{-1}}},\label{pwsEigenvalue}
\end{align}
as a single nontrivial eigenvalue. We will therefore apply \mthref{method} to this problem and, as in Theorem \ref{thm.Bonet}, describe the intersection of the forward flow of $S_{a,\varepsilon}$ with $\{x=\theta\}$.


\subsection{Main result}
Using the geometric method developed in this paper we prove the following:
\begin{theorem}\label{thm.tanh}
 Consider the analytic regularization function $\phi(\hat y)=\tanh (\hat y)$. Then the slow manifold of \eqref{eq.xhaty} intersects $\{x=\theta\}$ in $(\theta,y_\theta(\varepsilon))$ with
 \begin{align*}
  y_\theta(\varepsilon) = \theta^2 + \varepsilon \left(\frac14 \ln \left(\frac{\pi}{2}\varepsilon^{-1}\right)+R(\sqrt{\varepsilon})\right),
 \end{align*}
for some smooth function $R(\sqrt{\varepsilon})=\mathcal O(e^{-c\varepsilon^{-1}})$. 

Also let $\rho,\,\nu>0$. Then the mapping $P_\varepsilon$ from $\{x=-\rho,\,y\in[-\nu, \nu]\}$ to $\{x=\theta\}$, obtained from the forward flow, satisfies $P_\varepsilon(y) = y_\theta(\varepsilon)+\mathcal O(e^{-c/\varepsilon})$, $P_\varepsilon'(y) = \mathcal O(e^{-c/\varepsilon})$. 
\end{theorem}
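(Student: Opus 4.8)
The plan is to apply \mthref{method} to the system \eqref{eq.baryEqn} in the chart $\bar y=1$, where the critical manifold \eqref{eq.criticalbary} is flat and the nontrivial eigenvalue \eqref{pwsEigenvalue} decays exponentially as $\hat\varepsilon\to0^+$. Following Steps~1--3, I would augment $q=e^{-2\hat\varepsilon^{-1}}$ as a new dependent variable; implicit differentiation gives $\dot q=-2q(2x+q)$, and substituting $q$ for the flat term $e^{-2\hat\varepsilon^{-1}}$ in \eqref{eq.baryEqn} produces the extended system
\begin{align*}
 \dot x=\hat\varepsilon y,\quad \dot y=\hat\varepsilon y(2x+q),\quad \dot{\hat\varepsilon}=-\hat\varepsilon^2(2x+q),\quad \dot q=-2q(2x+q),
\end{align*}
on $(x,y,\hat\varepsilon,q)$, which agrees with \eqref{eq.baryEqn} on the invariant set $\{q=e^{-2\hat\varepsilon^{-1}}\}$ and conserves $\varepsilon=y\hat\varepsilon$. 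In these variables the slow manifold lies near $\{x=-q/2,\ y=0\}$, and the loss of hyperbolicity is now \emph{algebraic}, occurring along the line $x=q=y=0$ (with $\hat\varepsilon\ge0$ free).

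For Step~4 I would blow up this degenerate line, leaving $\hat\varepsilon$ untouched, by $x=r\bar x,\ q=r\bar q,\ y=r^2\bar\epsilon$ with $(\bar x,\bar q,\bar\epsilon)\in S^2$; the weights (here weight two on $y$) are fixed so that, after division by a power of $r$, the vector field desingularizes and the $r$-equation decouples as in Remark \ref{remark0}. In the entry chart $\bar q=1$ ($x=r_1x_1,\ q=r_1,\ y=r_1^2\epsilon_1$) I expect a picture directly parallel to Proposition \ref{prop.modelM1}: the desingularized field gains hyperbolicity of the line $\{x_1=-1/2,\ r_1=\epsilon_1=0\}$, and center-manifold theory yields an attracting center manifold continuing the incoming Fenichel manifold $S_{a,\varepsilon}$ of Lemma \ref{lemmaSaEpsTanh} onto the blowup and toward the fold. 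The strong contraction accumulated while the orbit hugs this attracting branch --- quantitatively $\int_{-\rho}^{0}(-2e^{-2\hat y})\varepsilon^{-1}\,dx\sim-2\rho^2\varepsilon^{-1}$ along the sliding region, using $e^{-2\hat y}=-2x$ there --- is what will deliver both $P_\varepsilon(y)=y_\theta(\varepsilon)+\mathcal O(e^{-c/\varepsilon})$ and $P_\varepsilon'(y)=\mathcal O(e^{-c/\varepsilon})$.

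The quantitative content --- here the logarithm, analogous to the algebraic correction in \eqref{eq.yTheta0} --- is extracted in the scaling chart $\bar\epsilon=1$, where the fold transition takes place. The $\theta^2$ term is simply the limiting orbit $y=x^2$ of $X^+$ through the visible fold, so that $y_\theta=\theta^2+C$ with $C$ the shift picked up crossing the degeneracy. On the inner scale $x=\sqrt\varepsilon X$, $y=\varepsilon Y$ of this chart, \eqref{eq.tanhNewxy} reduces to $\sqrt\varepsilon\,dY/dX=2\sqrt\varepsilon X+e^{-2Y}$; the balance $e^{-2Y}=\sqrt\varepsilon Z$ fixes the takeoff height $Y\sim\tfrac14\ln\varepsilon^{-1}$ (hence the coefficient $\tfrac14\ln\varepsilon^{-1}$) and turns the equation into the Riccati equation $Z'=-4XZ-2Z^2$. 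Linearizing by $Z=\tfrac12 u'/u$ gives $u''+4Xu'=0$, i.e. $u'\propto e^{-2X^2}$, so the matching constant is governed by a Gaussian integral $\int e^{-2s^2}\,ds$: this is the source of the factor $\pi$ and, after matching the attracting incoming solution ($Z\sim-2X$) to the outgoing orbit $y=x^2+C$, of the precise constant $\tfrac\pi2$ inside the logarithm.

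Finally I would blow back down and compose the chart maps to transport $\{x=-\rho,\ y\in[-\nu,\nu]\}$ through the entry and scaling charts to $\{x=\theta\}$, reading off $y_\theta(\varepsilon)=\theta^2+\varepsilon\big(\tfrac14\ln(\tfrac\pi2\varepsilon^{-1})+R\big)$ and using the invariance of $Q$ together with the decay of the eigenvalue to identify $R=\mathcal O(e^{-c\varepsilon^{-1}})$ and the exponential contraction of $P_\varepsilon$. The main obstacle, I expect, is not the geometry but the sharp constant: capturing the additive term at order $\varepsilon$ (not merely at order $\varepsilon\ln\varepsilon^{-1}$) forces one to pin down the constant in the transition integral exactly, keeping careful track of the subleading terms in the match between the sliding branch and the inner Riccati solution, and to show that the remainder $R$ is simultaneously smooth in $\sqrt\varepsilon$ and exponentially small. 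A secondary technical point is justifying the passage to the extended system and verifying that the weighted blowup indeed desingularizes with the decoupling of Remark \ref{remark0}, so that the center-manifold construction of Proposition \ref{prop.modelM1} transfers essentially verbatim.
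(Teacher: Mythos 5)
Your setup coincides with the paper's: the augmentation $q=e^{-2\hat\varepsilon^{-1}}$ with $\dot q=-2q(2x+q)$, the $(x,y,\hat\varepsilon,q)$ formulation, and the weight-$(1,1,2)$ blowup of the line $x=y=q=0$ leaving $\hat\varepsilon$ untouched are exactly the variant recorded in Remark \ref{rem.pwsRemark} (the paper's main text instead keeps $\varepsilon$ as the fourth variable and blows up $(x,q,\varepsilon)$ via \eqref{eq.blowupTanh}; the two are equivalent through the conserved quantity $\varepsilon=\hat\varepsilon y$). Your entry-chart center manifold is Lemma \ref{lem.Ma1}, and your inner Riccati equation $Z'=-2Z(2X+Z)$ is precisely the paper's scaling-chart equation for $q_2$, whose explicit error-function solution \eqref{eq.m2Eqn} is what your linearization $Z=\tfrac12 u'/u$, $u''+4Xu'=0$ reproduces. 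Up to the exit of the scaling chart, proposal and paper agree.

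The genuine gap is the final transport. You propose to reach $\{x=\theta\}$ ``through the entry and scaling charts'', but the scaling chart only covers $x=\mathcal O(\sqrt{\varepsilon})$ (bounded $x_2$), so it cannot reach $x=\theta=\mathcal O(1)$; and naive blow-down at its edge fails, because there $q$ is still only $\mathcal O(\sqrt{\varepsilon})$, not exponentially small, and its accumulated influence on $y$ over $x\in[\mathcal O(\sqrt{\varepsilon}),\theta]$ contributes at order $\varepsilon$ times an $\eta$-dependent constant --- far larger than the claimed $\mathcal O(\varepsilon e^{-c/\varepsilon})$ remainder. The paper closes this with a third, exit chart $\bar x=1$ \eqref{eq.kappa3tanh}: the orbit lies in the stable foliation of the inflowing center manifold $N_3$ \eqref{eq.N3}, Fenichel's normal form (Lemma \ref{lem.FNF}) straightens the stable fibers through the explicit function $W$ in \eqref{eq.Qeqn}, and only then is the reduced flow integrated exactly up to $\{x=\theta\}$ (Lemma \ref{lem.P3}). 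Crucially, the error-function expression for $W$ cancels the $\eta$-dependent terms carried out of the scaling chart (compare \eqref{eq.yM2} with \eqref{eq.tildeyM2}); this cancellation makes the answer independent of the chart-transition parameter, pins the constant inside the logarithm, and is also where $R=\mathcal O(e^{-c/\varepsilon})$ smooth in $\sqrt{\varepsilon}$ and the exponential estimates for $P_\varepsilon$ are actually established. Your ``match at $X\to+\infty$'' is the informal shadow of this argument, and you flag it as the main obstacle, but the idea that resolves it (exit chart plus fiber straightening) is missing. A caution on the constant: carried out exactly, the Gaussian matching gives $\tfrac14\ln\bigl(2\pi\varepsilon^{-1}\bigr)$ --- this is what both \eqref{eq.tildeyM2} and the direct integration in Appendix \ref{direct} produce --- whereas you assert $\tfrac14\ln\bigl(\tfrac{\pi}{2}\varepsilon^{-1}\bigr)$; the theorem statement and the paper's own computations disagree on this factor, so the stated $\tfrac{\pi}{2}$ should not be read as confirming your matching.
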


\begin{remark}
We notice that the leading order correction in Theorem \ref{thm.tanh} for $y_\theta(\varepsilon)$ is $\mathcal O( \varepsilon \ln \varepsilon^{-1})$ while the corresponding expression  in Theorem \ref{thm.Bonet}, describing the regularization by $C_{ST}^{n-1}$-functions, is $\mathcal O(\varepsilon)$ (since $r_2^{2n}\ll \varepsilon$ cf. \eqref{r2Bonet}). Furthermore, we notice that the expression in Theorem \ref{thm.Bonet} is a smooth function of $\varepsilon$ and $r_2=\varepsilon^{1/(2n-1)}$. In comparison, the expression in Theorem \ref{thm.tanh} is only smooth as function of $\varepsilon,\,\ln \varepsilon^{-1}$ and $\sqrt{\varepsilon}$. Our approach identifies the origin of these terms.
\end{remark}

\begin{remark}
It is actually possible to integrate the system \eqref{eq.tanhNewxy}  with $\phi(\hat y)=\tanh (\hat y)$ directly. In Appendix \ref{direct} we show that the result in Theorem \ref{thm.tanh} is in agreement with direct integration. This example therefore provides a useful forum in which to
introduce our geometric approach. Needless to say, our method, relying only on hyperbolic methods and normal form theory, applies to regularization by $\tanh$ of my complicated systems, such as nonlinear versions of $X^{\pm}$ and PWS systems in higher dimensions.
\end{remark}




\subsection{Proof of Theorem \ref{thm.tanh}}\label{sec.proofPWS}
To deal with the loss of hyperbolicity of \eqref{eq.criticalbary} we first proceed as in step 1 of \mthref{method} and extend the phase space dimension by introducing 
\begin{align}
 q =  e^{-2\hat \varepsilon^{-1}},\label{eq.qq}
\end{align}
as a new dynamic variable. (Here $\hat \varepsilon$ plays the role of $y$ in \mthref{method}.) Following steps 2 and 3 in \mthref{method} we obtain
\begin{align*}
 \dot q &= {2e^{-2\hat \varepsilon^{-1}}}\hat \varepsilon^{-2} \dot{\hat \varepsilon}=2q\hat \varepsilon^{-2} \dot{\hat \varepsilon}.
\end{align*}
by differentiation of \eqref{eq.qq}. We therefore consider the extended system:
\begin{align}
 \dot x &= \varepsilon,\label{eq.extQ}\\
 \dot{\hat \varepsilon} &=-\hat \varepsilon^2 (2x+q),\nonumber\\
 \dot q &=-2q(2x+q),\nonumber\\
 \dot \varepsilon &=0,\nonumber
\end{align}
on the phase space: $$(x,\hat \varepsilon,q,\varepsilon)\in \mathbb R\times \overline{\mathbb R}_+^3.$$
\ed{(We will use \eqref{eq.hatepsNew} as
\begin{align}
 y =  \hat \varepsilon^{-1}\varepsilon,\label{eq.hallo}
\end{align}
whenever we wish to describe $y$ (see also Remark \ref{rem.pwsRemark} below).)}
The set \eqref{eq.criticalbary} then becomes 
\begin{align}
 S_a=\left\{(x,\hat \varepsilon,q,\varepsilon)\vert x=-\frac12 q,\,\,\hat \varepsilon>0,\,q>0,\varepsilon=0\right\},\label{eq.Saq}
\end{align}
abusing notation slightly.
Furthermore, we note that the set 
\begin{align}
 Q:\quad &q=e^{-2\hat \varepsilon^{-1}},\label{eq.setQTanh}
\end{align}
obtained from \eqref{eq.qq} is an invariant sets of \eqref{eq.extQ}. However, it is implicit in \eqref{eq.extQ} and we will evoke the invariance only when needed. 

\subsection{Blowup}
The line 
\begin{align}
\hat \varepsilon\ge 0,\,\,x=0,\,q=0,\varepsilon=0,\label{eq.NHline}
\end{align}
is a set of nonhyperbolic critical points of \eqref{eq.extQ}. 

We therefore consider the following blowup:
\begin{align}
 x = \bar r\bar x,\,\varepsilon = \bar r^2\bar \epsilon,\, q = \bar r\bar q,\label{eq.blowupTanh}
\end{align}
leaving (cf. step 4 in \mthref{method}) $\hat \varepsilon$ untouched.
The blowup transformation \eqref{eq.blowupTanh} gives rise to a vector-field $\overline X$ on $$(\hat \varepsilon,\bar r,(\bar x,\bar \epsilon,\bar q))\in \overline{\mathbb R}_+^2\times S^2.$$
Here $\overline X\vert_{\bar r=0}=0$ but the weights of $r$ in the blowup in \eqref{eq.blowupTanh} are chosen so that $\widehat X\equiv \bar r^{-1}\overline X\vert_{\bar r=0}$ is non-trivial. It is $\widehat X$ that we shall study in the following. We do so by considering the following charts:
\begin{align}
 \kappa_1:&\quad \bar q=1:\quad \quad x=r_1x_1,\,\varepsilon = r_1^2\epsilon_1,\,q = r_1,\label{eq.kappa1tanh}\\
 \kappa_2:&\quad \bar \epsilon=1:\quad \quad x=r_2x_2,\,\varepsilon = r_2^2,\, q = r_2q_2,\label{eq.kappa2tanh}
\end{align}
and
\begin{align}
 \kappa_3:&\quad \bar x=1:\quad \quad x=r_3,\,\varepsilon = r_3^2\epsilon_3,\,q = r_3q_3.\label{eq.kappa3tanh}
\end{align} 
Since $\hat \varepsilon$ is not transformed by this blowup transformation, we keep (as promised) using this symbols in the different charts. Geometrically, the line of critical points \eqref{eq.NHline} is upon \eqref{eq.blowupTanh} blown up to a cylinder of spheres: $\overline{\mathbb R}_+ \times S^2$.

\begin{remark}\label{rem.pwsRemark}
\ed{One could also write (\ref{eq.extQ}) as
\begin{align}
 \dot x &= \hat \varepsilon y,\label{eq.extQNew}\\
 \dot{ y} &= \hat \varepsilon y (2x+q),\nonumber\\
 \dot{\hat \varepsilon} &=-\hat \varepsilon^2 (2x+q),\nonumber\\
 \dot q &=-2q(2x+q),\nonumber
\end{align}
using \eqref{eq.hatepsNew}, 
and consider the phase space $$(x,y,\hat \varepsilon,q)\in \mathbb R\times \overline{\mathbb R}_+.$$ (In fact this would be useful if the piecewise smooth vector-fields $X^\pm$ were dependent upon $y$.) Then the conservation of the small parameter $\varepsilon$ would be {implicit} in \eqref{eq.extQNew} as an invariant foliation
\begin{align}
 \varepsilon = \hat \varepsilon y,\label{eq.Eset}
\end{align}
by $\varepsilon=\text{const}.\ge 0$. 
In the $(x,y,\hat \varepsilon,q)$-space, the nonhyperbolic line \eqref{eq.NHline} would become 
\begin{align*}
\hat \varepsilon\ge 0,\,\,x=0,y=0,\,q=0,
\end{align*}
which could be blown up as
\begin{align*}
 x = \bar r\bar x,\,y = \bar r^2\bar y,\, q = \bar r\bar q,\quad (\bar r,(\bar x,\bar y,\bar q))\in \overline{\mathbb R}_+\times S^2,
\end{align*}
in a similar fashion to \eqref{eq.blowupTanh}. However, for the purpose of demonstrating \mthref{method}, I prefer the formulation in (\ref{eq.extQ}) because it has (as the other examples considered in this paper) an explicit slow-fast structure. This streamlines the analysis with e.g. \cite{krupa_extending_2001,krupa_extending2_2001}. 
}
\end{remark}


\ed{In Appendix \ref{appC} we analyze each of the different charts $\kappa_1$, $\kappa_2$ and $\kappa_3$. The geometry is sketched in Fig. \ref{tanh}.
We summarise the results here: In the entry chart $\kappa_1$ \eqref{eq.kappa1tanh} we gain hyperbolicity of $S_a$ (blue in Fig.  \ref{tanh}) locally on the blowup cylinder $\bar r=0$: $(\hat \varepsilon,(\bar x,\bar y,\bar q))\in \overline{\mathbb R}_+\times S^2$. This provides, using the invariance of $Q$ \eqref{eq.setQTanh} and \eqref{eq.Eset},  (see Lemma \ref{lem.Ma1}, Remark \ref{rem.Ma1} and Fig. \ref{tanhkappa1}) an extension of Fenichel's slow manifold $S_{a,\varepsilon}$ (red in Fig. \ref{tanh}) $\mathcal O(\ln^{-1} \varepsilon^{-1})$-close to $\bar C_a$ (see \eqref{eq.Ca1}); a unique local center manifold within $(\hat \varepsilon,(\bar x,\bar y,\bar q))\in \{0\}\times S^2$ (green in Fig. \ref{tanh}). Then in the scaling chart $\kappa_2$ \eqref{eq.kappa2tanh} we carry the extension of the slow manifold across the blowup sphere from an explicit solution that guides $\bar C$ forward (see \eqref{eq.m2Eqn} and Lemma \ref{lem.Ma2}). This part is illustrated in  Fig. \ref{tanhkappa2}. Finally, in the exit chart $\kappa_3$ \eqref{eq.kappa3tanh} we find an inflowing, attracting center manifold $\bar N\subset \{\bar q=0\}$ (see \eqref{eq.N3}). The forward flow of $\bar C$ is contained within $W_{loc}^s(\bar N)$. Therefore, by applying Fenichel's normal form \cite{jones_1995} in Lemma \ref{lem.FNF}, we are finally in Lemma \ref{lem.P3} able to guide the slow manifold, through a set of reduced equations (see \eqref{eq.reduced3}) on the stable fibers of $\bar N$, up until the intersection with $\{x = \theta\}$ (see Fig. \ref{tanhkappa3}). This completes the proof of Theorem \ref{thm.tanh}. }

\begin{figure}
\begin{center}
\includegraphics[width=.9\textwidth]{./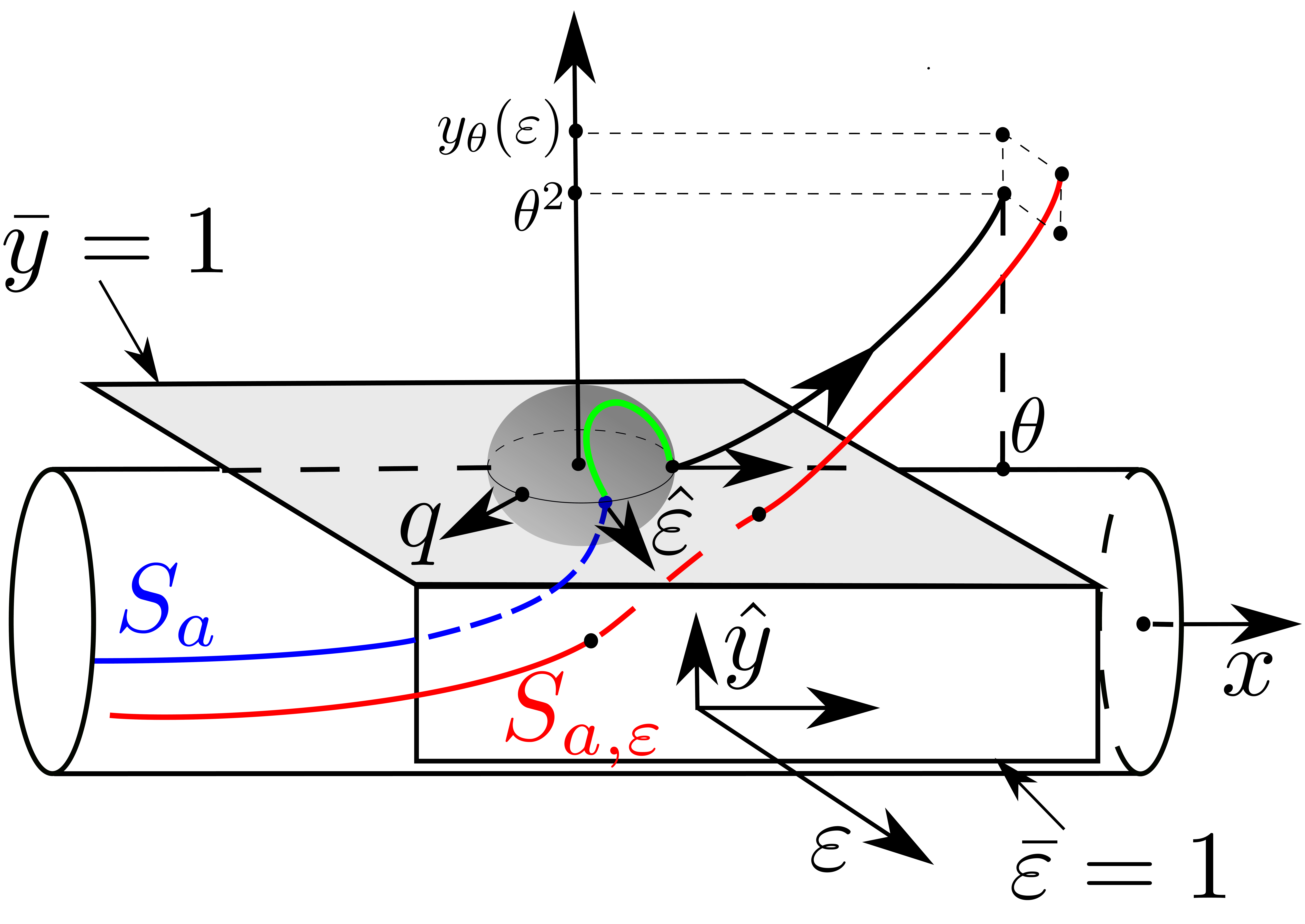}
 \caption{Illustration of the geometry associated to the analysis of $X_\varepsilon$. }
\label{tanh}
 \end{center}
              \end{figure}

%

\section{A flat slow manifold in a model for aircraft ground dynamics}\label{sec.aircraft}
 In this section we consider the system \eqref{eq.aircraftModel00}, repeated here for convinience:
 \begin{align}
 \dot u&= -\varepsilon(\alpha-v),\label{eq.aircraftModel01}\\
 \dot v&= -u-(v-a)e^{vb},\nonumber
\end{align}
  with parameters $a$ and $b$ fixed and use $\alpha$ as a bifurcation parameter. In comparison with \cite{aircraft} we have replaced $b$ by $b^{-1}$. The system \eqref{eq.aircraftModel01} possesses a critical manifold which is a graph over the fast variable $v$:
\begin{align*}
C=\{(u,v)\vert u= (a-v)e^{vb}\}.
\end{align*}
Linearization of \eqref{eq.aircraftModel01} about $C$ for $\varepsilon=0$ gives
\begin{align}
 -\left(b(v-a)-1\right) e^{vb},\label{eq.hyperbolicity}
\end{align}
as a single non-trivial eigenvalue. The manifold $C$ therefore splits into an attracting critical manifold:\footnote{Here we do not use $S_a$ and $S_r$ for attracting and repelling critical manifolds because we find that this becomes confusing when we later reverse the direction of time.}
\begin{align*}
 \hat S=C\cap \{v>a-b^{-1}\},
\end{align*}
a fold point:
\begin{align*}
 F = (u_f,v_f),
\end{align*}
with 
\begin{align*}
 u_f = b^{-1} e^{(a-b^{-1})b},\quad v_f=a-b^{-1},
\end{align*}
and a repelling critical manifold:
\begin{align*}
 S=C\cap \{v<a-b^{-1}\},
\end{align*}
Compact subsets of $\hat S$ and $S$ perturb by Fenichel's theory to $\hat S_\varepsilon$ and $S_\varepsilon$. 
The system undergoes a Hopf bifurcation near the fold $F$ at a parameter value $$\alpha = \alpha_H(\varepsilon)\equiv a-b^{-1}+\mathcal O(\varepsilon).$$ This leads to a canard explosion phenomenon near the canard value $\alpha_c\equiv a-b^{-1}+\mathcal O(\varepsilon)$ where the limit cycles born in the Hopf bifurcation undergoes $\mathcal O(1)$-changes within a parameter regime of width $\mathcal O(e^{-c/\varepsilon})$, $c>0$. See \cite{krupa_relaxation_2001}. Examples of limit cycles computed using AUTO by the present author are shown in Fig. \ref{aircraftNew} near $\alpha_c \approx -10^{-3}$ for $a=b=1$ and $\varepsilon=10^{-3}$. The small limit cycles, following the repelling manifold $S_\varepsilon$ before jumping directly towards the attracting manifold $\hat S_\varepsilon$, are frequently called canard cycles \textit{without head}. Similarly, the canard cycles that leave $S_\varepsilon$ on the other side, escaping towards infinity $v\rightarrow -\infty$ before returning to $\hat S_\varepsilon$, are said to be \textit{with head} (indicated by the eye). Eventually the canard cycles become relaxation oscillations that escape directly towards infinity $v\rightarrow -\infty$ at the fold $F$. Note how trajectories cross $S$ for $v\ll 0$. This is due to the fact that $S$ loses hyperbolicity at infinity $v\rightarrow -\infty$. Similar transition to relaxation oscillations occur in \cite[Fig. 4]{bro2} and \cite{Gucwa2009783}, but in \eqref{eq.aircraftModel01} the loss of hyperbolicity occurs at an exponential rate $\mathcal O(e^{vb})$. Therefore the classical theory of canard explosion \cite{krupa_relaxation_2001} does not describe the transition from canard cycles without head to those with head. The aim of this section is to apply our approach to \eqref{eq.aircraftModel01} and obtain a description of this transition as $\varepsilon\rightarrow 0$ and establish the existence of large canard cycles. 

\begin{figure}
\begin{center}
\includegraphics[width=.7\textwidth]{./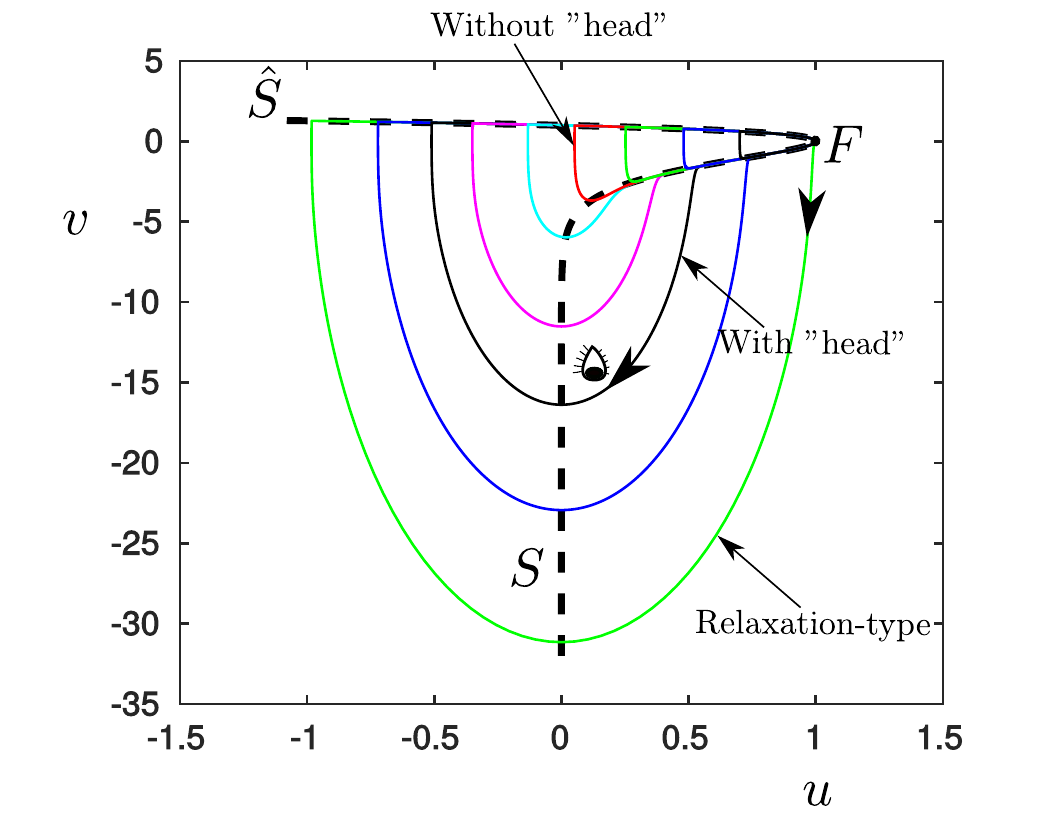}
 \caption{Different canard cycles: without head (e.g. the red orbit), with head (e.g. the large black orbit with the eye) and relaxation-type oscillations (large green orbit), near the canard explosion for $a=b=1$, $\varepsilon=10^{-3}$ and $\alpha \approx -10^{-3}$.}
\label{aircraftNew}
 \end{center}
              \end{figure}

\subsection{Setup}
To obtain large amplitude limit cycles near the canard value, we proceed as in the classical analysis \cite{krupa_relaxation_2001}, and consider two sections at $\{v=v_f\}$:
\begin{align}
\Gamma &= \{(u,v)\vert u\in [-c^{-1},c^{-1}],\,v=v_f\},\label{eq.GammaNeg}\\
\Lambda &=\{(u,v)\vert u\in [-c^{-1}+u_f,c^{-1}+u_f],\,v=v_f\},\label{eq.Lambda}
\end{align}
for $c$ sufficiently small. See Fig. \ref{Caircraft}. 
\begin{figure}
\begin{center}
\includegraphics[width=.6\textwidth]{./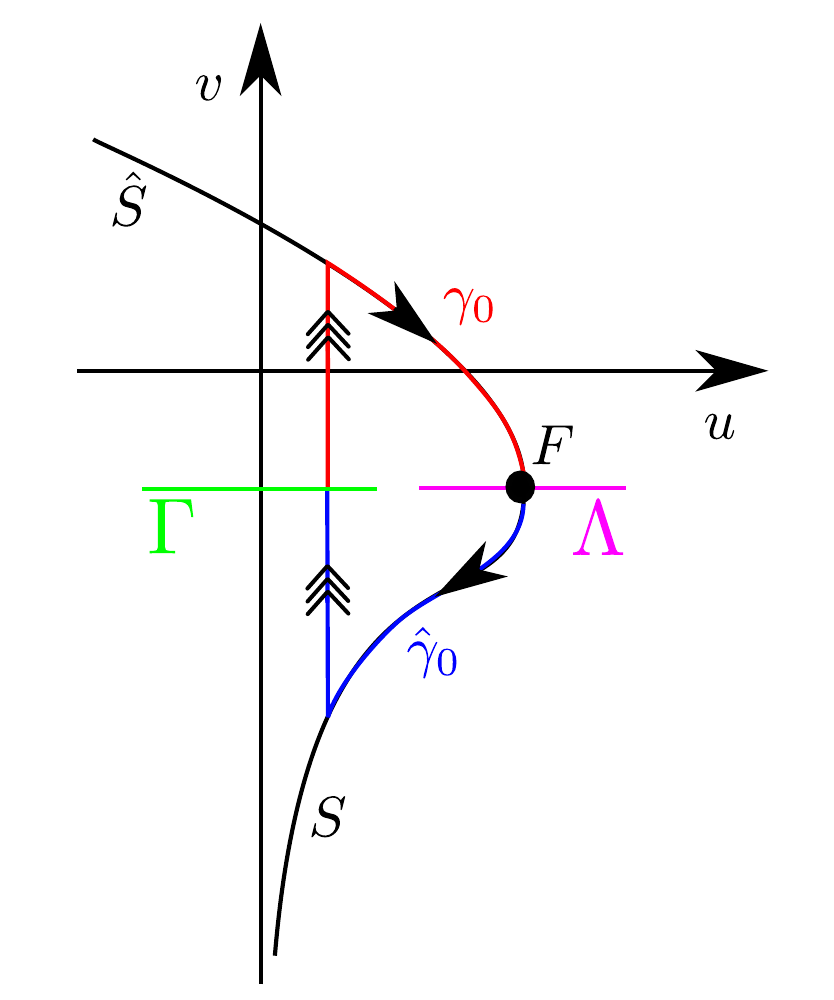}
 \caption{Illustration of the critical manifold $C=\hat S\cup F\cup S$ and the two sections $\Gamma$ and $\Lambda$.}
\label{Caircraft}
 \end{center}
              \end{figure}
Let $(u,v)\in \Gamma$ and consider the forward orbit $\gamma_\varepsilon(u,\alpha)$ and backward orbit $\hat \gamma_\varepsilon(u,\alpha)$. Denote their first intersection with $\Lambda$ by $d_\varepsilon(u,\alpha)$ and $\hat d_\varepsilon(u,\alpha)$, respectively. For 
\begin{align}
u\ge \bar c^{-1}>0,\label{eq.uBarC}
\end{align}
with $\bar c>c$ fixed but large, the standard theory applies. Indeed, by the transverse intersection of (fixed copies of) $\hat S_\varepsilon$ and $S_\varepsilon$ at $\alpha=\alpha_c$ it is possible to solve $d_\varepsilon(u,\alpha)=\hat d_\varepsilon(u,\alpha)$ for $\alpha=\alpha_c+\mathcal O(e^{-c/\varepsilon})$ by the implicit function theorem. This argument fails near $u=0$ due to the loss of hyperbolicity of $S$ at $u=0$. Geometrically, the loss of hyperbolicity, as in the models \eqref{eq.kuehn0} considered by Kuehn in \cite{kuehn2014}, is due to the asymptotic alignment for $v\rightarrow -\infty$ of the tangent spaces $TS$ with the critical fibers. In the following we will study $S$ using our methods and describe the limit cycles that intersect $\Gamma$ with $u\approx 0$. For this we reverse time so that $S$ becomes attracting and consider the equations:
\begin{align}
 \dot u &= \varepsilon(\alpha-v),\label{eq.aircraftModel0}\\
 \dot v&= u+(v-a)e^{vb}.\nonumber
\end{align}
\subsection{Equations at infinity}
To deal with the loss of hyperbolicity for $v\rightarrow -\infty$ we introduce the following \textit{chart}:
\begin{align}
 x = -uv^{-1},\quad y = -v^{-1},\label{eq.xyFromWz}
\end{align}
based upon Poincar\'e compactification \cite{chicone}. Then $y=0^+$ corresponds to $v=-\infty$. It is convenient (rather than a necessity) if the fold $F$ is visible in the chart \eqref{eq.xyFromWz} and we shall therefore henceforth assume that $$v_f=a-b^{-1}<0.$$  This allows us to only work in the $(x,y)$-variables. 
Using \eqref{eq.aircraftModel0} and \eqref{eq.xyFromWz} we obtain the following equations:
\begin{align}
 \dot x &=y\left(\varepsilon(1+\alpha y)+x\left(x-(1+ay)e^{-y^{-1}b}\right)\right),\label{eq.aircraftInfty}\\
 \dot y&=y^2\left(x-(1+ay)e^{-y^{-1}b}\right),\nonumber
\end{align}
after multiplication of the right hand side by $y$. This corresponds to a nonlinear transformation of time for $y>0$ (which will become useful later on in \eqref{eq.aircraftQ}, recall the remark in step 3 of \mthref{method}). 
The critical manifold $S$ from above becomes
\begin{align*}
 S:\quad x = (1+ay)e^{-y^{-1}b},\quad y\in (0,y_f)
\end{align*}
in the $(x,y)$-variables while the fold $F$ becomes
\begin{align*}
 F=(x_f,y_f).
\end{align*}
Here $x_f = -u_fv_f^{-1}$, $y_f=-v_f^{-1}$. The manifold $\hat S$ is only partially visible ($v<0$ only) in the $(x,y)$-variables. 
We will continue to denote $\hat S$, $S$ and $F$ by the same symbols in the new variables $(x,y)$. The set $S$ is still a set of critical points of \eqref{eq.aircraftInfty}$_{\varepsilon=0}$. But as opposed to \eqref{eq.aircraftModel0}, where the fibers were vertical: $\mathcal F_{u_0}=\{(u_0,v)\vert v\in \mathbb R\}$ for $\varepsilon=0$, the singular fibers are now tilted:
\begin{align*}
 \mathcal F_{u_0}=\{(x,y)\vert x=u_0y\},
\end{align*}
with respect to the $(x,y)$-variables. See also Fig. \ref{aircraftNewNew} where segments of the limit cycles in Fig. \ref{aircraftNew} are illustrated in the $(x,y)$-variables. 

\begin{figure}
\begin{center}
\includegraphics[width=.7\textwidth]{./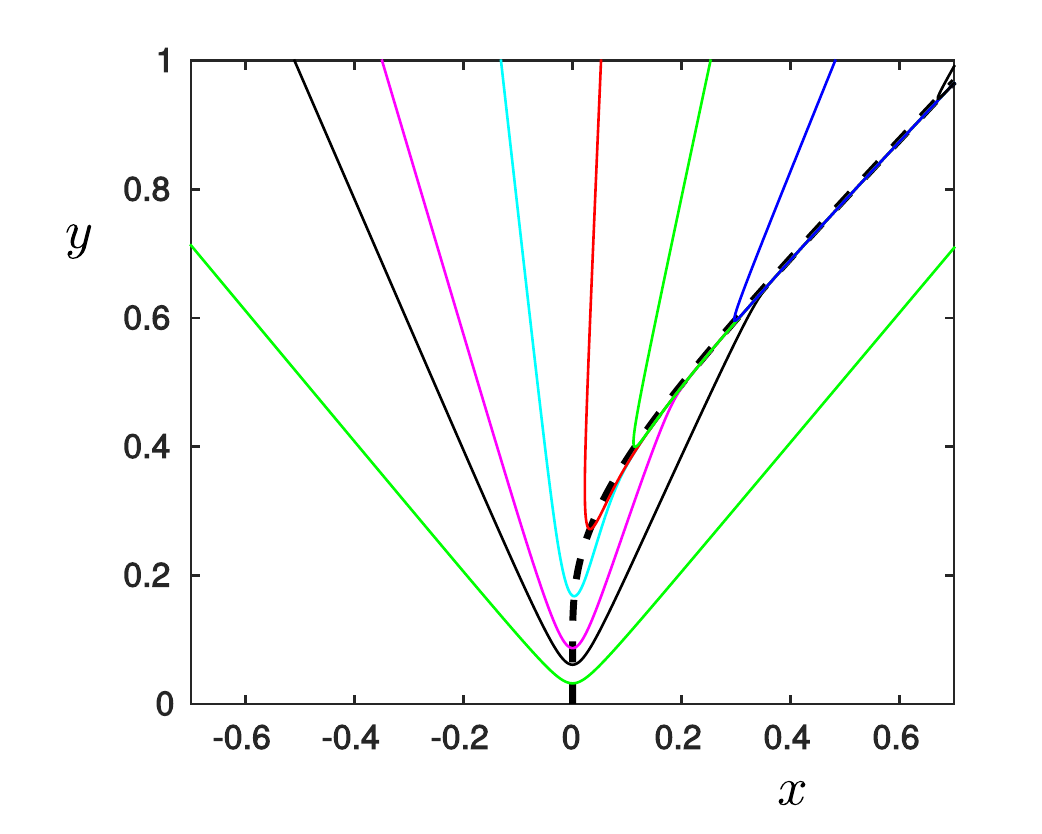}
 \caption{The limit cycles from Fig. \ref{aircraftNew} in the $(x,y)$-variables.}
 \label{aircraftNewNew}
\end{center}
              \end{figure}

The fibers to the left of $S$ all emanate from the unstable fibers of $\hat S$. Note again that $S$ is flat at $y=0$ and therefore loses hyperbolicity at $y=0$ at an exponentially rate. Indeed, the linearization of \eqref{eq.aircraftInfty}$_{\varepsilon=0}$ about $S$ yields
\begin{align*}
 -be^{-y^{-1}b}(1+(a-b^{-1})y),
\end{align*}
as a single non-trivial eigenvalue. 

The section $\Gamma$ in \eqref{eq.GammaNeg} becomes
\begin{align*}
 \Gamma=\{(x,y)\vert x\in [-\nu,\nu],\,y=y_f\},
\end{align*}
with $\nu = -c^{-1}v_f^{-1}$,
in the new variables. We again use the same symbol for this section. 

\subsection{Main result}
By \cite{krupa_relaxation_2001}, a fixed slow manifold $S_\varepsilon$ intersects $\{y=y_f\}$ in $(x_{y_f}(\varepsilon),y_f)$ with
\begin{align*}
 x_{y_f}(\varepsilon) \equiv (1+ay_f)e^{-y_f^{-1}b}+\mathcal O(\varepsilon),
\end{align*}
for $\alpha -\alpha_H(0)=\mathcal O(\varepsilon)$. 
We then (re-)define $\Lambda$ as
\begin{align*}
  \Lambda =\{(x,y)\vert x\in [x_{y_f}(\varepsilon)-\nu,x_{y_f}(\varepsilon)+\nu],\, y=y_f\},
\end{align*}
with $\nu>0$,
and consider the following mapping 
\begin{align*}
 P_\varepsilon:\quad \Gamma\rightarrow \Lambda,
\end{align*}
obtained by the forward flow of \eqref{eq.aircraftInfty}. We will then prove the following:
\begin{theorem}\label{thm.aircraft}
Consider $0<\varepsilon\ll 1$. Then for $\nu>0$ sufficiently small the mapping $P_\varepsilon$ is a strong contraction, satisfying the following estimates
\begin{align*}
 P_\varepsilon(x) = x_{y_f}(\varepsilon)+\mathcal O(e^{-c/\varepsilon}),\quad P_\varepsilon'(x) = \mathcal O(e^{-c/\varepsilon}).
\end{align*}

%
\end{theorem}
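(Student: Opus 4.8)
The plan is to apply \mthref{method} to \eqref{eq.aircraftInfty} and then reproduce, chart for chart, the three-chart blowup analysis already carried out for Theorem \ref{thm.tanh}. First I would execute Steps 1--3 of \mthref{method}. Here the nontrivial eigenvalue of $S$ is $-be^{-y^{-1}b}(1+(a-b^{-1})y)$, so I introduce the exponentially flat factor $q=e^{-y^{-1}b}$ as a new dependent variable. Differentiating along \eqref{eq.aircraftInfty} gives
\begin{align*}
 \dot q = bq\left(x-(1+ay)q\right),
\end{align*}
and substituting $q$ for $e^{-y^{-1}b}$ in \eqref{eq.aircraftInfty} (but leaving the algebraically decaying factors $1+ay$, $1+\alpha y$ intact) produces the extended system
\begin{align*}
 \dot x &= y\left(\varepsilon(1+\alpha y)+x\left(x-(1+ay)q\right)\right),\\
 \dot y &= y^2\left(x-(1+ay)q\right),\\
 \dot q &= bq\left(x-(1+ay)q\right),\\
 \dot\varepsilon &= 0,
\end{align*}
on $(x,y,q,\varepsilon)\in\mathbb R\times\overline{\mathbb R}_+^3$, which agrees with \eqref{eq.aircraftInfty} on the invariant set $Q=\{q=e^{-y^{-1}b}\}$. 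In these variables the critical manifold becomes the algebraic graph $x=(1+ay)q$, $\varepsilon=0$, with eigenvalue $\sim -bq$, so that hyperbolicity is now lost only algebraically along the line $y\ge0$, $x=q=\varepsilon=0$.

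Next I would carry out Step 4: blow up this line by $x=\bar r\bar x$, $q=\bar r\bar q$, $\varepsilon=\bar r^2\bar\epsilon$ (leaving $y$ untouched), desingularize by dividing by $\bar r$, and analyze the charts $\kappa_1:\bar q=1$, $\kappa_2:\bar\epsilon=1$, $\kappa_3:\bar x=1$. The weights coincide with those of \eqref{eq.blowupTanh} because the quadratic terms $x^2,xq,q^2$ and the linear-in-$\varepsilon$ term balance in exactly the same way as in the $\tanh$ problem, and the overall factor $y$ from the time rescaling is harmless since $y$ is bounded and untransformed. In the entry chart $\kappa_1$ one recovers hyperbolicity of the critical manifold on the blowup cylinder and extends Fenichel's slow manifold $S_\varepsilon$ onto it as a (unique, exponentially close) center manifold, exactly as in Proposition \ref{prop.modelM1} and the $\kappa_1$-analysis behind Theorem \ref{thm.tanh}. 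In the scaling chart $\kappa_2$, where $\varepsilon=r_2^2$, one carries this extension across the blowup sphere using the explicit solution available there, and in the exit chart $\kappa_3$ one locates an inflowing attracting center manifold $\bar N\subset\{\bar q=0\}$, shows that the continued slow manifold lies in $W^s_{\mathrm{loc}}(\bar N)$, and applies Fenichel's normal form to transport everything back up to $\{y=y_f\}$.

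With this in place the estimates assemble as in Theorem \ref{thm.tanh}. The map $P_\varepsilon$ is well defined because for $\nu$ small all of $\Gamma$ lies below $S$ (i.e.\ $x<x_{y_f}(\varepsilon)$), so orbits first have $\dot y<0$ and descend into the blowup region near $(x,y)=(0,0)$, are captured by the attracting manifold, and then return along $S_\varepsilon$ to $y=y_f$, landing on $\Lambda$ near $x_{y_f}(\varepsilon)$. Translating the $\kappa_3$ estimates back through \eqref{eq.xyFromWz}, and using the invariance of $Q$ together with the conservation of $\varepsilon$, the strong contraction $P_\varepsilon(x)=x_{y_f}(\varepsilon)+\mathcal O(e^{-c/\varepsilon})$ and $P_\varepsilon'(x)=\mathcal O(e^{-c/\varepsilon})$ follow: the exponential rate comes, as for Theorem \ref{thm.tanh}, from the $\mathcal O(1)$ hyperbolic attraction in the scaling/exit charts combined with $\varepsilon=r_2^2$, which forces the entire stable foliation over $\Gamma$ to contract by $e^{-c/\varepsilon}$.

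I expect the main obstacle to be twofold. First, the sections $\Gamma$ and $\Lambda$ sit at the fixed $\mathcal O(1)$ value $y=y_f$, whereas the blowup description is local near $y=0$; one must glue it to a standard Fenichel description valid for $y$ bounded away from $0$ and verify that orbits leaving $\Gamma$ genuinely reach and remain in the domains of validity of the three charts. Second, and most delicate, is extracting the genuinely exponential contraction $\mathcal O(e^{-c/\varepsilon})$ in the exit chart via the Fenichel normal form for $\bar N$ — this is the heart of the argument and the point at which the exponential flatness, re-encoded through $q=e^{-y^{-1}b}$ and the scaling $\varepsilon=r_2^2$, is converted into the claimed strong-contraction estimates. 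The bifurcation parameter $\alpha$ and the coefficients $1+ay$, $1+\alpha y$ add only bookkeeping and do not alter the hyperbolic structure.
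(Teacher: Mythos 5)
Your Steps 1--4 are fine: introducing $q=e^{-y^{-1}b}$ instead of the paper's $q=(1+ay)e^{-y^{-1}b}$ is a variant the paper itself anticipates (it only remarks that the latter makes $S:\,x=q$ cleaner), your extended system and the weights $(1,1,2)$ for $(x,q,\varepsilon)$ agree with \eqref{eq.aircraftBU}, and the entry-chart analysis is indeed routine. The genuine gap is everything after that: you transplant the three-chart structure of Theorem \ref{thm.tanh} (entry chart, scaling chart with an explicit guiding solution, exit chart with an \emph{attracting} inflowing center manifold $\bar N\subset\{\bar q=0\}$ and a Fenichel normal form), but the aircraft problem does not have fold-type dynamics on the blowup cylinder --- it has a canard/transcritical structure, and your picture gets the stability of $\{\bar q=0\}$ wrong precisely where it matters. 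In the scaling chart, the line $I_2=\{y=q_2=0\}$ of \eqref{eq.I2} is \emph{attracting for $x_2<0$ and repelling for $x_2>0$}, exchanging stability with $C_2$ \eqref{eq.C2} at $x_2=0$ (see \eqref{eq.eqx2q2}); correspondingly, in the chart $\bar x=1$ the set $\{q=0\}$ is unstable, not an attracting $\bar N$. So your claim that every orbit from $\Gamma$ simply descends, is ``captured by the attracting manifold,'' and rides $S_\varepsilon$ back to $\Lambda$ fails for initial conditions with $x\le 0$: those orbits are attracted to $I_2$, drift across the transcritical point, and cling to the \emph{repelling} part of $I_2$ for an $\mathcal O(1)$ range of $x_2$ before being ejected --- this is the delayed-stability mechanism that produces canards with head, and it is exactly the phenomenon the theorem must control.

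Concretely, the paper splits $\Gamma_2\cap Q_2$ into three regimes and needs separate arguments you do not supply: (a) $x_2\ge\nu$, where a center-manifold/reduced-flow argument close to yours does work; (b) $x_2\le-\nu$, where the return map $x_2\mapsto x_2^+(x_2)=-x_2+o(1)$ is established by Schecter-type upper and lower solutions bounding $q_2$ (Lemma \ref{finalProp}), after which attraction to $S_2$ takes over; and (c) $x_2\in(-\nu,\nu)$, where a \emph{second} blowup \eqref{eq.FinalBlowup} of $(x,y,q)=0$ with four charts is required, the key object being the strong unstable manifold $W^{uu}(0)$ of Lemma \ref{lem.newKappa1}, which is the separatrix between canards with and without head. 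None of these ingredients can be read off from the $\tanh$ analysis, because in that problem (a visible fold) all orbits funnel monotonically past the degeneracy and out along an attracting $\{\bar q=0\}$, whereas here the degeneracy is a canard point on the cylinder. As written, your argument proves the estimates only for the portion of $\Gamma$ with $x$ bounded below by a positive $\mathcal O(\sqrt\varepsilon)$ quantity, and the ``heart of the argument'' you locate in the exit-chart Fenichel normal form does not exist in this problem.
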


This result provides the transition from the canard cycles with head to those without head by enabling a description of the backward orbit $\hat \gamma_\varepsilon$ for $u<\bar c$ (recall \eqref{eq.uBarC} and Fig. \ref{Caircraft}) from $\Gamma$ up until the section $\Lambda$. Our approach describes the geometry of the transition.
We will focus on the details of subsets of $\Gamma$ with $x=o(1)$ with respect to $\varepsilon$. Initial conditions with $x=\mathcal O(1)$ but negative will be briefly discussed in the final section \ref{sec.final}. We will proceed as described in \mthref{method} 
by introducing the following function
\begin{align}
 q = (1+ay)e^{-y^{-1}b},\label{eq.qAircraft}
\end{align}
as a new dynamic variable
and consider the extended system:
\begin{align}
 \dot x&=y\left(\varepsilon(1+\alpha y)+x\left(x-q\right)\right),\label{eq.aircraftQ}\\
 \dot y&=y^2 \left(x-q\right),\nonumber\\
 \dot q&=q\left(x-q\right)\left(b+\frac{ay^2}{1+ay}\right).\nonumber
\end{align}
This system is obtained by differentiating \eqref{eq.qAircraft} with respect to time and using \eqref{eq.aircraftInfty} and \eqref{eq.qAircraft} to eliminate the exponential $e^{-y^{-1}b}$, following steps 2 and 3 of \mthref{method}. The set 
\begin{align}
 Q=\{(x,y,q)\vert q=(1+ay)e^{-y^{-1}b}\},\label{eq.setQAircraft}
\end{align}
obtained from \eqref{eq.qAircraft} is by construction an invariant set of \eqref{eq.aircraftQ}. Again it is implicit in \eqref{eq.aircraftQ} and we can invoke it when needed. We could also have set $q=e^{-y^{-1}b}$, the analysis would be almost identical. However, some of the resulting expressions simplify using \eqref{eq.qAircraft}. In particular, $S$ just reads:
\begin{align}
 S:\quad x = q. \label{eq.SaQ}
\end{align}
The set $S$ in \eqref{eq.SaQ} is a set of critical points of \eqref{eq.aircraftQ}$_{\varepsilon=0}$. It is nonhyperbolic for $x=q=0$, but now the system is algebraic to leading order. To deal with this loss of hyperbolicity we may therefore consider the following blowup:
\begin{align}
 q = r\bar q,\,x=r\bar x,\,\varepsilon = r^2 \bar \epsilon,\quad r\ge 0,\,(\bar q,\bar x,\bar \epsilon)\in S^2.\label{eq.aircraftBU}
\end{align}
Notice that $y$ (as promised) is not part of the transformation \eqref{eq.aircraftBU} so geometrically $x=q=\varepsilon=0$ is blown-up to a cylinder of spheres: $(y,(\bar q,\bar x,\bar \epsilon))\in \overline{\mathbb R}_+\times S^2$. The transformation \eqref{eq.aircraftBU} gives a vector-field $\overline X$ on the blowup space by pull-back. Here $\overline X_{r=0}=0$. However, it is the desingularized vector-field $\widehat X=r^{-1}\overline X$ with $\widehat X\vert_{r=0}\ne 0$ that we study in the following. \ed{For simplicity, we just focus on the scaling chart:
\begin{align}
 \bar \varepsilon&=1:\quad q=r_2q_2,\,x=r_2x_2,\,\varepsilon=r_2^2.\nonumber
\end{align}
The details of the entry chart $\bar q=1$ is similar to the analysis in section \ref{sec.modelFlat} and therefore left out. The scaling chart $\bar \epsilon=1$ then focuses on $x=\mathcal O(r_2=\sqrt{\varepsilon})$. As we shall see, this scaling captures (upon further blowup) the transition from canards without heads to those with head. 
}

\subsection{The scaling chart}\label{sec.airKappa2}
Insertion gives the following system
\begin{align}
 \dot x_2&=y\left((1+\alpha y)+x_2\left(x_2-q_2\right)\right),\label{eq.aircraftQ2}\\
 \dot y&=y^2 \left(x_2-q_2\right),\nonumber\\
 \dot q_2&=q_2\left(x_2-q_2\right)\left(b+\frac{ay^2}{1+ay}\right),\nonumber
\end{align}
after division by $$r_2=\sqrt{\varepsilon}.$$ 
Let
\begin{align}
 U_2=\left\{(x_2,y,q_2)\vert x_2\in [-\xi^{-1},\xi^{-1}],\,y\in [0,\chi],\,q_2 = [0,\mu^{-1}]\right\},\label{eq.setU2Air}
\end{align}
be a closed box with side lengths $2\xi^{-1}$, $\chi$, $\mu^{-1}$ fixed with respect to $0<\varepsilon\ll 1$. Note that \eqref{eq.aircraftQ2} is independent of $r_2$ (and hence $\varepsilon$). The small parameter therefore only enters through the invariance of the set $Q$ \eqref{eq.setQAircraft}. Indeed, in the scaling chart, $\bar \varepsilon=1$, the set $Q$ becomes:
\begin{align*}
Q_2 = \{(x_2,y,q_2)\vert \sqrt{\varepsilon} q_2 = (1+ay)e^{-y^{-1}b}\}.
\end{align*}
We shall in the following consider initial conditions within $\Gamma_2\cap Q_2$ where 
\begin{align}
 \Gamma_2 = \{(x_2,y,q_2)\in U_2\vert q_2 = \mu^{-1}\},\label{eq.Gamma2Negative}
\end{align}
is the face of the box $U_2$ \eqref{eq.setU2Air} with $q_2=\mu^{-1}$. 
Note that
\begin{align}
 \Gamma_2\cap Q_2=\{(x_2,y,q_2)\in U_2\vert q_2 = \mu^{-1},\,y = \mathcal O(\ln^{-1} \varepsilon^{-1})\},\label{eq.initKappa2}
\end{align}
as $\varepsilon\rightarrow 0$. 
For $\varepsilon=0$ we therefore have $y=0$ on $\Gamma_2 \cap Q_2$. Setting $y=0$ in \eqref{eq.aircraftQ2} gives:
\begin{align}
 \dot x_2 &= 0,\label{eq.eqx2q2}\\
 \dot y &=0,\nonumber\\
 \dot q_2 &=q_2(x_2-q_2).\nonumber
\end{align}
Here 
\begin{align}
I_2=\{(x_2,y,q_2)\in U_2\vert y=0,\,q_2=0\},\label{eq.I2}
\end{align}
undergoes a transcritical bifurcation at $x_2=0$, as a set of critical points of \eqref{eq.eqx2q2}, going from asymptotically stable for $x_2<0$ to unstable for $x_2>0$. This produces
\begin{align}
C_2 = \{(x_2,y,q_2)\in U_2\vert y=0,\,q_2=x_2,\,x_2>0\}, \label{eq.C2}
\end{align}
at $x_2=0$, as a new set of critical points.

\ed{We now fix $\nu>0$ small and divide the proof of Theorem \ref{thm.aircraft} into three parts, considering initial conditions within $\Gamma_2\cap Q_2$ satisfying: 
\begin{itemize}
 \item[(a)] $x_2\in [\nu,\xi^{-1}]$;
 \item[(b)] $x_2\in [-\xi^{-1}, -\nu]$;
 \item[(c)] and finally $x_2\in (-\nu,\nu)$. 
\end{itemize}
We consider each of the cases in the following sections. }

\subsection*{Case (a): $x_2\ge \nu$}
We have:
\begin{lemma}
The critical set $C_2$ of \eqref{eq.eqx2q2} is normally hyperbolic. Let $\mu<\xi$ in $U_2$ \eqref{eq.setU2Air}. Then any compact subset $C_2\cap \{x_2\ge \nu\}$, $\nu>0$ small, therefore perturb into an attracting, locally invariant, center manifold manifold
\begin{align}
 S_2 =\{U_2 \vert q_2 = x_2 + ym_2(x_2,y),\,x_2 \in [\nu,\xi^{-1}],\,y \in [0,\chi]\},\label{eq.S2lemma}
\end{align}
for $\chi$ sufficiently small, with $m_2$ smooth and satisfying:
\begin{align*}
 m(x_2,0) =- \frac{1}{bx_2}.
\end{align*}

\end{lemma}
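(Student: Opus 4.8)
The plan is to read the hyperbolic structure of $C_2$ off the linearisation of the chart system \eqref{eq.aircraftQ2}, obtain $S_2$ as an attracting center manifold, and then pin down its leading $y$-coefficient $m_2(\cdot,0)$ from the invariance equation.

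First I would note that $\{y=0\}$ is invariant for \eqref{eq.aircraftQ2}, with restricted flow \eqref{eq.eqx2q2} (at $y=0$ the factor $b+\tfrac{ay^2}{1+ay}$ reduces to $b$). Along $C_2$ the Jacobian of \eqref{eq.aircraftQ2} is
\[
\begin{pmatrix} 0 & 1 & 0\\ 0 & 0 & 0\\ bx_2 & 0 & -bx_2\end{pmatrix},
\]
with spectrum $\{0,0,-bx_2\}$: the eigenvector for $-bx_2$ is the fast attracting direction $(0,0,1)$, while the $2$-dimensional generalised $0$-eigenspace is spanned by the tangent $(1,0,1)$ to $C_2$ and the generalised eigenvector $\bigl(0,1,-\tfrac{1}{bx_2}\bigr)$. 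Since $x_2\ge\nu>0$ and $b>0$, the nonzero eigenvalue is uniformly bounded away from $0$ on the compact arc $C_2\cap\{x_2\ge\nu\}$; this is the asserted normal hyperbolicity (attractivity) of $C_2$ within $\{y=0\}$ and supplies the uniform spectral gap used below. Note already that the generalised eigenvector predicts the center manifold to satisfy $\partial_y q_2|_{y=0}=-\tfrac{1}{bx_2}$, which is the claimed value of $m_2(x_2,0)$.

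Next I would invoke center manifold theory for \eqref{eq.aircraftQ2} along this arc. The uniform splitting into a $1$-dimensional stable bundle (eigenvalue $-bx_2$) and a $2$-dimensional center bundle yields a locally invariant, attracting center manifold $S_2$ through $C_2$, carrying a contracting stable foliation with uniform exponential contraction at rate at least $b\nu>0$; the hypothesis $\mu<\xi$ serves only to keep $C_2\cap\{x_2\ge\nu\}$ and its local stable fibers inside the box $U_2$ of \eqref{eq.setU2Air}. Because $S_2$ is a graph over $(x_2,y)$ agreeing with $C_2$ on $\{y=0\}$, the difference $q_2-x_2$ vanishes at $y=0$ and is therefore divisible by $y$, giving precisely $q_2=x_2+y\,m_2(x_2,y)$ with $m_2$ as smooth as $S_2$; since center manifolds agree to all finite orders along $C_2$, the coefficient $m_2(x_2,0)$ is independent of the (non-unique) choice of $S_2$.

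Finally I would fix $m_2(x_2,0)$ by invariance. On $S_2$ one has $x_2-q_2=-y\,m_2$, so to leading order $\dot x_2=y+\mathcal O(y^2)$, $\dot y=\mathcal O(y^3)$ and $\dot q_2=-b\,x_2\,y\,m_2+\mathcal O(y^2)$; differentiating the graph and matching the $\mathcal O(y)$ terms in $\dot q_2=\dot x_2+m_2\dot y+y(\partial_{x_2}m_2\,\dot x_2+\partial_y m_2\,\dot y)$ forces $-b\,x_2\,m_2(x_2,0)=1$, i.e.\ $m_2(x_2,0)=-\tfrac{1}{bx_2}$, consistent with the eigenvector computation above. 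The main obstacle is conceptual, not computational: the double zero eigenvalue means $C_2$ is \emph{not} normally hyperbolic in the full three-dimensional system (the $y$-direction is a center, not a hyperbolic, direction), so Fenichel's theorem does not apply directly; the care needed is to run the argument through center manifold theory while using the uniform gap $-bx_2\le-b\nu$ to make both the reduction and the attractivity of $S_2$ uniform on the compact arc.
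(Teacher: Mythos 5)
Your proposal is correct and follows essentially the same route as the paper's (very terse) proof: linearize \eqref{eq.aircraftQ2} along $C_2$, observe that $-bx_2\le -b\nu<0$ is the single non-zero eigenvalue, and invoke center manifold theory to obtain $S_2$ local in $y$. Your explicit Jacobian, the generalized-eigenvector prediction of $\partial_y q_2|_{y=0}=-\tfrac{1}{bx_2}$, and the invariance-equation matching simply carry out in detail what the paper dismisses as a ``straightforward calculation,'' and all of these computations check out.
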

\begin{proof}
 Straightforward calculation. The linearization about a point in $C_2$ gives $-bx_2\le -b\nu<0$ as a single non-zero eigenvalue. This provides the existence of a center manifold $S_2$, local in $y$. 
\end{proof}
The center manifold $S_2$ is (through a trivial extension into chart $\kappa_1$) an enlargement of the slow manifold of \eqref{eq.SaQ}, upon scaling back down and restricting by $Q$, up until a neighborhood of $(x,y)=0$ that scales like
\begin{align*}
 (x,y) = \left(\mathcal O(\sqrt{\varepsilon}),\mathcal O(\ln^{-1} \varepsilon^{-1}))\right).
\end{align*}
The reduced problem on $S_2\cap Q_2$ gives 
\begin{align*}
 \dot x_2  &= 1+\mathcal O(y),\quad y=\mathcal O(\ln^{-1}\varepsilon^{-1}),
\end{align*}
after division by $y$ on the right hand side. Hence $x_2$ is increasing on $S_2$ for $\varepsilon>0$ sufficiently small. This enable us to guide initial conditions within $\Gamma_2 \cap Q_2$ with $x_2\ge \nu$ along $S_2$ and eventually back (by a simple passage through the chart $\bar q=1$) to the section $\Sigma$. The strong uniform contraction along the slow manifold establishes the result of Theorem \ref{thm.aircraft} for these set of initial conditions. Case (a) therefore describes an extension to large (since $y=\mathcal O(\ln^{-1}\varepsilon^{-1})$) canard cycles without head.

\subsection*{Case (b): $x_2\le -\nu$}
On the other hand, for $x_2\le -\nu$, say, we obtain a delayed stability phenomenon due to the attraction (repulsion) of the invariant set $I_2$ \eqref{eq.I2} for $x_2<0$ ($x_2>0$). 
We describe the delayed stability in the following lemma:
\begin{lemma}\label{finalProp}
 Consider $0<\varepsilon\ll 1$ and initial conditions in $\Gamma_2\cap Q_2$ with $x_2\le -\nu $, $\nu>0$. Then for $\mu$ sufficiently large, the forward flow of \eqref{eq.aircraftQ2} gives rise to a first return mapping $x_2\mapsto x_2^+(x_2)$ on $\Gamma_2\cap Q_2$ which satisfies:
 \begin{align*}
  x_2^+(x_2) = -x_2 + o(1),\quad (x_2^+)'(x_2) = -1 + o(1),
 \end{align*}
as $\varepsilon\rightarrow 0$. 
\end{lemma}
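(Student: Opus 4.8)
The plan is to read this as a delayed loss of stability (entry--exit) problem for the invariant set $I_2$ in \eqref{eq.I2}, which attracts in the $q_2$-direction for $x_2<0$ and repels for $x_2>0$, the fast eigenvalue being $bx_2+\mathcal O(y^2)$. An orbit starting on $\Gamma_2\cap Q_2$ (see \eqref{eq.initKappa2}) with $x_2\le-\nu$ is drawn towards $\{q_2=0,\,y=0\}$, drifts forward with $x_2$ increasing (note $\dot x_2>0$ throughout, since $q_2\le\mu^{-1}$ is small and $1+\alpha y+x_2(x_2-q_2)>0$, so $x_2$ is a monotone variable along the passage), crosses the transcritical point $x_2=0$ at an exponentially small value of $q_2$, and is then repelled back up to $q_2=\mu^{-1}$. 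The first-return value $x_2^+$ is the exit value of this way-in--way-out passage, and the content of the lemma is that the passage is symmetric to leading order, $x_2^{\mathrm{out}}=-x_2^{\mathrm{in}}+o(1)$.

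The key structural observation is that \eqref{eq.aircraftQ2} admits the exact first integral
\begin{align*}
 H=\ln q_2+\frac{b}{y}-\ln(1+ay),
\end{align*}
whose invariance is precisely that of $Q$ in \eqref{eq.setQAircraft}, since $Q_2=\{H=\tfrac12\ln\varepsilon^{-1}\}$; differentiating along \eqref{eq.aircraftQ2} gives $\dot H=0$. Because $q_2=\mu^{-1}$ is fixed on $\Gamma_2$, the restriction of $H$ to $\Gamma_2$ is a strictly monotone function of $y$ alone, so the return map preserves $y$: the orbit re-enters $\Gamma_2\cap Q_2$ at the same value $y=y_*(\varepsilon)=\mathcal O(\ln^{-1}\varepsilon^{-1})$ at which it left. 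This pins the geometry and reduces the problem to the asymptotics of a one-parameter family of maps as $y_*\to 0^+$.

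To extract the leading order I would compute the way-in--way-out integral using $x_2$ as independent variable and discarding the $\mathcal O(q_2)$ and $\mathcal O(y)$ corrections. On the invariant plane $\{q_2=0\}$ one has $\frac{d\ln y}{dx_2}=\frac{x_2}{1+x_2^2}$, hence $y=y_*\sqrt{(1+x_2^2)/(1+(x_2^{\mathrm{in}})^2)}$, and the fast rate integrates to
\begin{align*}
 \ln\frac{q_2(x_2)}{\mu^{-1}}=\frac{b}{y_*}\left(1-\frac{\sqrt{1+(x_2^{\mathrm{in}})^2}}{\sqrt{1+x_2^2}}\right)+\cdots.
\end{align*}
The return condition $q_2=\mu^{-1}$ forces the bracket to vanish, i.e. $(x_2^{\mathrm{out}})^2=(x_2^{\mathrm{in}})^2$, giving $x_2^+(x_2)=-x_2+o(1)$ and, upon differentiation of this defining relation, $(x_2^+)'(x_2)=-1+o(1)$. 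The same formula shows $q_2\le\mu^{-1}$ throughout (its minimum, at $x_2=0$, is exponentially small in $1/y_*$), so for $\mu$ large the orbit remains in $U_2$ \eqref{eq.setU2Air} and the return map is well defined whenever $-x_2^{\mathrm{in}}\le\xi^{-1}$.

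The hard part is turning this formal entry--exit computation into rigorous value and, above all, derivative estimates, uniformly as $\varepsilon\to 0$, across the transcritical point $x_2=0$ where the normal hyperbolicity of $\{q_2=0,\,y=0\}$ degenerates. Away from $x_2=0$ (for $x_2\le-\nu$ and $x_2\ge\nu$) the set $\{q_2=0\}$ is normally hyperbolic, so Fenichel theory together with the exponential contraction/expansion of $q_2$ controls both the orbit and its variation. The delicate region is a neighbourhood of $x_2=0$, where I would resolve the degeneracy by a further weighted blowup of the line $x_2=q_2=y=0$, as anticipated in the text; in the resulting charts one regains hyperbolicity and can match the incoming and outgoing branches while propagating the variational estimate yielding $(x_2^+)'=-1+o(1)$. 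Controlling this derivative --- rather than merely the value $x_2^+=-x_2+o(1)$ --- through the non-hyperbolic passage is the main obstacle, since the exponentially small $q_2$ there makes the variational equation stiff and one must exploit the invariance of $\{q_2=0\}$ and of $Q_2$ to close the estimates.
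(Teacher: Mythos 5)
Your leading-order analysis is correct and is, in substance, the same as the paper's: the paper also treats this as a delayed-stability (entry--exit) problem in the sense of \cite{schecter}, also uses $x_2$ as the independent variable, also solves the $\{q_2=0\}$ dynamics exactly (its \eqref{eq.dy2dx2}--\eqref{eq.y2Sol}) to get $y=y_0\sqrt{1+x_2^2}\,(1+\mathcal O(y_0))$ with $y_0=\mathcal O(\ln^{-1}\varepsilon^{-1})$ playing the role of the small parameter, and then balances integrated contraction against integrated expansion of $q_2$ to obtain the symmetric exit rule $(x_2^{\mathrm{out}})^2=(x_2^{\mathrm{in}})^2$. Your first integral $H=\ln q_2+by^{-1}-\ln(1+ay)$ is a nice touch: it is precisely the statement that the sets $Q_2$, taken over all $\varepsilon\ge 0$, form an invariant foliation, and it cleanly forces the return to land on $\Gamma_2\cap Q_2$ at the same height $y_*(\varepsilon)$.

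The gap is in the completion, which you leave open, and the route you sketch for closing it targets the wrong difficulty. The hard part is not local to the transcritical point: along the whole passage the orbit is exponentially close (in $1/y_*$) to the exactly invariant plane $\{q_2=0\}$, so nothing needs to be ``resolved'' at $x_2=0$; what is needed is a global, two-sided bound on $\ln q_2$ (and on its variation with respect to the initial condition) over the entire interval $[x_2^{\mathrm{in}},x_2^{\mathrm{out}}]$, uniformly as $y_*\to 0$. A further polynomial-weight blowup of $(x_2,y,q_2)=0$ as in \eqref{eq.FinalBlowup} cannot supply this: at $x_2=0$ your own formula gives $q_2=\mathcal O(e^{-c/y_*})$ while $y=\mathcal O(y_*)$, so in the blown-up coordinates the orbit sits at $\rho=\mathcal O(\sqrt{y_*})$ with $\bar q=\mathcal O(e^{-c/y_*}/\sqrt{y_*})$, i.e.\ still exponentially close to the invariant set $\{\bar q=0\}$; the hyperbolicity gained on the sphere does not determine the exit point, which is still fixed by the same accumulated exponential balance. (This is why the paper uses the blowup \eqref{eq.FinalBlowup} only for case (c), $x_2\in(-\nu,\nu)$, where the entry data are themselves of the order of the blowup scales, and not for the present case (b).) The paper instead closes the argument exactly where you stop: having trapped $y$ between $y_0\sqrt{1+x_2^2}\,(1\mp\delta)$, it writes down explicit upper and lower comparison solutions $\bar q_2,\underline q_2$ for the scalar $q_2$-equation in the variable $x_2$ and invokes Schecter's argument, which delivers both the value estimate and the derivative estimate $(x_2^+)'=-1+o(1)$. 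The derivative estimate in particular does not follow from ``differentiating the defining relation'' $(x_2^{\mathrm{out}})^2=(x_2^{\mathrm{in}})^2+o(1)$, since an $o(1)$ error in a relation need not have an $o(1)$ derivative; it requires two-sided control of the variational equation along the passage, which the comparison-function method (or an equivalent normal-form/exchange-lemma argument) provides and your proposal does not.
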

\begin{proof}
 A proof of this statement follows the original proof of the delayed stability phenomenon in planar slow-fast systems studied in \cite{schecter}. This approach to the problem basically uses appropriate lower and upper solutions to properly bound the motion of the fast variable. To translate this into the current context we need to bound $q_2$. For this we first bound $y$. We note the following: For $q_2=0$ we can write \eqref{eq.aircraftQ2} as
 \begin{align}
  y'(x_2) = \frac{yx_2}{1+x_2^2+\alpha y}.\label{eq.dy2dx2}
 \end{align}
 The solution through $y(0)=y_0$ is
 \begin{align}
y = y_0\frac{\sqrt{(\alpha y_0+1)^2+x_2^2(2\alpha+1)}+\alpha y_0}{1+2\alpha y_0}=y_0 \sqrt{x_2^2+1}(1+\mathcal O(y_0)).\label{eq.y2Sol}
 \end{align}
Therefore for $\delta>0$ sufficiently small we have that $y$ of \eqref{eq.aircraftQ}, with initial conditions from \eqref{eq.Gamma2Negative}, is bounded as
\begin{align*}
 y_0\sqrt{x_2^2+1}(1-\delta) \le y\le y_0 \sqrt{x_2^2+1}(1+\delta).
\end{align*}
Here $y_0=\mathcal O(\ln^{-1}\varepsilon^{-1})$ cf. the initial conditions \eqref{eq.Gamma2Negative} and $y_0$ will play the role of the small parameter. From here we can bound the fast variable by considering the following two upper $\bar q_2$ and lower $\underline q_2$ solutions  obtained from the following equations:
\begin{align*}
 \bar q_2'(x_2)&=\frac{\bar q_2(x_2+\delta)}{y_0 \sqrt{x_2^2+1}(1-\delta)},\\
 \underline q_2'(x_2)&=\frac{\underline q_2(x_2-\delta)}{y_0 \sqrt{x_2^2+1}(1+\delta)},
\end{align*}
after possibly decreasing $\delta$ further. 
Proceeding as in \cite{schecter} gives the desired result.
\end{proof}

\begin{figure}
\begin{center}
\includegraphics[width=.7\textwidth]{./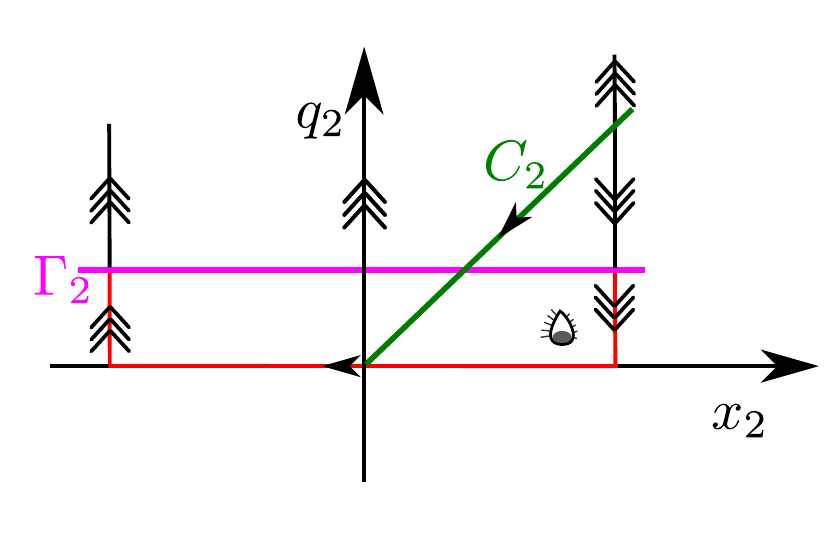}
 \caption{Illustration of the delayed stability that gives rise to canards with head. The return mapping to $\Gamma_2$ is described by Lemma \ref{finalProp}. We have here used the original orientation of time also used in Fig. \ref{aircraftNew}. Hence, the arrows should be reversed when time is as in \eqref{eq.aircraftQ2}.}
\label{aircraftKappa2}
 \end{center}
              \end{figure}

From $(x_2,y) = (x_2^+(x_2)+o(1),\mathcal O(\ln^{-1} \varepsilon^{-1}))$ we use the attraction of $S_2$ to follow the trajectory up to $\Sigma$. This proves the statement of Theorem \ref{thm.aircraft} for this set of initial conditions. Case (b) therefore describes large canards with small heads through delayed stability. See also Fig. \ref{aircraftKappa2} where we, for easing the comparison with Fig. \ref{aircraftNew}, use the original direction of time.

\subsection*{Case (c): $x_2\in (-\nu,\nu)$}
\ed{In the following we describe the forward flow of the initial conditions within $\Gamma_2\cap Q_2$ with $x_2\in (-\nu,\nu)$. For this we first drop the subscripts in \eqref{eq.aircraftQ2} so that
\begin{align}
 \dot x&=y\left((1+\alpha y)+x\left(x-q\right)\right),\label{eq.xyq}\\
 \dot y&=y^2 \left(x-q\right),\nonumber\\
 \dot q&=q\left(x-q\right)\left(b+\frac{ay^2}{1+ay}\right),\nonumber
\end{align}
 and consider the subsequent blowup:
\begin{align}
 x = \rho \bar x,\quad y = \rho^2 \bar y,\quad q = \rho \bar q,\quad (\rho,(\bar x,\bar y,\bar q)) \in \overline{\mathbb R}_+\times S^2,\label{eq.FinalBlowup}
\end{align}
of the nonhyperbolic point $(x,y,q)=0$ of \eqref{eq.xyq} to a sphere $(\rho,(\bar x,\bar y,\bar q))\in \{0\}\times S^2$. Here desingularization is obtained through division by $\rho$. 
We consider the following charts:
\begin{align*}
 \kappa_1:\quad \bar q&=1:\quad x=\rho_1x_1,\,y=\rho_1^2 y_1,\, q=\rho_1,\nonumber\\
 \kappa_2:\quad \bar x&=-1:\quad x=-\rho_2,\,y=\rho_2^2 y_2,\,q=\rho_2q_2,\nonumber\\
 \kappa_3:\quad \bar y&=1:\quad x=\rho_3x_3,\,y=\rho_3^2,\,q_3=\rho_3q_3,\nonumber\\
 \kappa_4:\quad \bar x&=1:\quad \quad x=\rho_4,\,y=\rho_4^2 y_4,\,q=\rho_4q_4,\nonumber,
\end{align*}
successively. 
The chart $\kappa_1$ is most important. Here we obtain the equations
\begin{align}
 \dot \rho_1 &= \rho_1 (x_1-1)\left[b+\frac{a\rho_1^4 y_1^2}{1+a\rho_1^2 y_1}\right],\label{eq.airKappa1}\\
 \dot x_1 &=y_1 \left(1+\alpha \rho_1^2 y_1+\rho_1^2 x_1(x_1-1)\right)-x_1(x_1-1)\left[b+\frac{a\rho_1^4 y_1^2}{1+a\rho_1^2 y_1}\right],\nonumber\\
 \dot y_1 &=-y_1(x_1-1)\left(2\left[b+\frac{a\rho_1^4 y_1^2}{1+a\rho_1^2 y_1}\right]-y_1\rho_1^2\right).\nonumber
\end{align}
Note that initial conditions within $\Gamma_2\cap Q_2$ \eqref{eq.initKappa2} belongs to the invariant set $\{y_1=0\}$ for $\varepsilon = 0$ with $\rho_1=\mu^{-1}$ in this chart. 
We then prove the succeeding lemma in Appendix \ref{appD}:
\begin{lemma}\label{lem.newKappa1}
 The following holds for \eqref{eq.airKappa1}:
 \begin{itemize}
  \item[$1^\circ$] The equilibrium 
  \begin{align}
  (\rho_1,x_1,y_1)=0,\label{eq.originK1}
  \end{align}
  is hyperbolic, the linearization having eigenvalues: $-b,\,b$ and $2b$. Let $U_1$ be a small neighborhood of the origin $(\rho_1,x_1,y_1)=0$. Then
  \begin{align*}
   W^s(0)\cap U_1&=\{(\rho_1,x_1,y_1)\in U_1\vert x_1=y_1=0,\,\rho_1\ge 0\},\\   
   W^u(0)\cap U_1&=\{(\rho_1,x_1,y_1)\in U_1\vert \rho_1=0\}.
  \end{align*}
  Within $W^u(0)\subset \{\rho_1=0\}$ there exists a unique smooth strong unstable manifold:
  \begin{align*}
  W^{uu}(0)\cap U_1= \{(\rho_1,x_1,y_1)\in U_1\vert \rho_1 = 0, y_1 = b x_1 + \mathcal O(x_1^2), 
  \end{align*}
tangent to the strong eigenvector $(0,1,b)$ of the eigenvalue $2b$ at the equilibrium \eqref{eq.originK1}.
\item[$2^\circ$] The strong unstable manifold is contained within the first quadrant of the $(x_1,y_1)$-plane and is forward asymptotic to the equilibrium
\begin{align}
  L_1:\,\rho_1=0,\,x_1=1,\,y_1=0,\label{eq.L1K1}
  \end{align}
approaching $L_1$ along the vector $(0,1,b)$. 
\item[$3^\circ$] The equilibrium \eqref{eq.L1K1} is partially hyperbolic, the linearization having eigenvalues $0,\,0$ and $-b$. Let $V_1$ be a small neighborhood of \eqref{eq.L1K1}. Then within $V_1$ there exists a smooth attracting center manifold 
\begin{align}
 S_1:\,x_1 = 1+y_1m_1(\rho_1,y_1),\quad m_1(\rho_1,y_1) = b^{-1}+\mathcal O(y_1),\label{eq.S1Expr}
\end{align}
containing 
$$C_1:\,\rho_1>0,\, x_1=1,y_1=0,$$ as a line of fix-points, and
\begin{align*}
 D_1 = W^{uu}(0)\cap V_1\subset \{\rho_1=0\},
\end{align*}
as a geometrically unique center sub-manifold. 
 \end{itemize}
\end{lemma}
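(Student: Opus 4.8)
The plan is to treat the three assertions $1^\circ$--$3^\circ$ in turn, the global connection in $2^\circ$ being the substantive step. Throughout I would use two invariance facts for \eqref{eq.airKappa1}: the $\rho_1$-axis $\{x_1=y_1=0\}$ is invariant (there $\dot\rho_1=-b\rho_1$, $\dot x_1=\dot y_1=0$), and the plane $\{\rho_1=0\}$ is invariant (there $\dot\rho_1=0$). For $1^\circ$ I would linearize \eqref{eq.airKappa1} at the origin \eqref{eq.originK1}; the Jacobian is block-triangular,
\begin{align*}
 \begin{pmatrix} -b & 0 & 0\\ 0 & b & 1\\ 0 & 0 & 2b\end{pmatrix},
\end{align*}
with eigenvalues $-b,\,b,\,2b$ and eigenvectors $(1,0,0),\,(0,1,0),\,(0,1,b)$, so the origin is hyperbolic. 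Since the two invariant coordinate objects above are tangent to the corresponding eigenspaces, they must be the stable and unstable manifolds: $W^s(0)\cap U_1$ is the $\rho_1$-axis and $W^u(0)\cap U_1=\{\rho_1=0\}$. The strong unstable manifold exists by the strong-unstable manifold theorem applied inside $\{\rho_1=0\}$, is tangent to the fastest eigenvector $(0,1,b)$, and its graph $y_1=bx_1+\mathcal O(x_1^2)$ follows from the standard invariance-equation expansion.

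The heart of the lemma is $2^\circ$. As $W^{uu}(0)\subset\{\rho_1=0\}$, I would study the planar restriction of \eqref{eq.airKappa1} to $\rho_1=0$,
\begin{align*}
 \dot x_1 = y_1+bx_1(1-x_1),\qquad \dot y_1 = 2by_1(1-x_1),
\end{align*}
whose only equilibria in $\{x_1\ge0,\,y_1\ge0\}$ are the origin and $L_1=(1,0)$. The branch of $W^{uu}(0)$ tangent to $(0,1,b)$ enters the open first quadrant, which is positively invariant: $\{y_1=0\}$ is invariant and on $\{x_1=0,\,y_1>0\}$ one has $\dot x_1=y_1>0$. While $0<x_1<1$ both $\dot x_1>0$ and $\dot y_1>0$, and once $y_1>0$ the bound $\dot x_1\ge y_1$ is bounded below by a positive constant, so $x_1$ reaches $1$ in finite time at some $y_1=y^{*}>0$. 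The decisive observation is that on $\{x_1=1,\,y_1>0\}$ we have $\dot x_1=y_1>0$, so the line $x_1=1$ can only be crossed from left to right; hence once the orbit enters $\{x_1\ge1\}$ it remains there, where $\dot y_1=2by_1(1-x_1)\le0$ forces $y_1$ to decrease monotonically while $\dot x_1=y_1-bx_1(x_1-1)<0$ for large $x_1$ keeps $x_1$ bounded. Thus the forward orbit is bounded with $y_1\downarrow y_\infty\ge0$; invariance of the $\omega$-limit set forces $\dot y_1=0$ there, hence $y_\infty=0$, and the invariant part of $\{y_1=0,\,x_1\ge1\}$ is the logistic flow $\dot x_1=bx_1(1-x_1)$ with sink $x_1=1$. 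Therefore the $\omega$-limit set is exactly $L_1$, and this argument avoids any appeal to the Poincar\'e--Bendixson theorem.

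For $3^\circ$, linearizing \eqref{eq.airKappa1} at $L_1$ in \eqref{eq.L1K1} gives
\begin{align*}
 \begin{pmatrix} 0 & 0 & 0\\ 0 & -b & 1\\ 0 & 0 & 0\end{pmatrix},
\end{align*}
with eigenvalues $0,0,-b$, stable direction $(0,1,0)$, and a two-dimensional center eigenspace spanned by $(1,0,0)$ and $(0,1,b)$. Center manifold theory then yields an attracting center manifold $S_1$ tangent to this eigenspace, writable as the graph \eqref{eq.S1Expr}; matching the leading order of the invariance equation gives $m_1(\rho_1,0)=b^{-1}$, i.e. $m_1=b^{-1}+\mathcal O(y_1)$. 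The line $C_1:\rho_1>0,\,x_1=1,\,y_1=0$ consists of equilibria by direct substitution and lies on $S_1$. Finally, $D_1=W^{uu}(0)\cap V_1\subset\{\rho_1=0\}$ is a center submanifold: by $2^\circ$ it approaches $L_1$ along $(0,1,b)$, hence is tangent to the center eigenspace. Its geometric uniqueness follows because the restriction of the center manifold to the invariant plane $\{\rho_1=0\}$ is one-dimensional (the restricted linearization has eigenvalues $0,-b$), and a one-dimensional center manifold that is simultaneously an orbit asymptotic to $L_1$ must be that orbit, hence unique. Computing the reduced flow $\dot s=-2bs^2$ with $s=x_1-1$ confirms both the direction of approach and the attractivity used in $2^\circ$.

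The main obstacle is the global step in $2^\circ$: ensuring the strong unstable orbit neither escapes to infinity nor limits on anything other than $L_1$. The monotonicity argument above (forward invariance of $\{x_1\ge1\}$ together with monotone decay of $y_1$) is what makes this clean, after which the remaining verifications are routine linear algebra and standard invariant-manifold expansions.
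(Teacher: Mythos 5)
Your proposal is correct and follows essentially the same route as the paper: linearization together with invariance of $\{x_1=y_1=0\}$ and $\{\rho_1=0\}$ for $1^\circ$, a planar trapping/monotonicity argument inside $\{\rho_1=0\}$ for $2^\circ$, and center-manifold theory at $L_1$ (the invariance equation giving $m_1=b^{-1}+\mathcal O(y_1)$, with $D_1=W^{uu}(0)\cap V_1$ geometrically unique because $W^{uu}(0)$ is) for $3^\circ$. The only difference is one of detail: the paper merely asserts that a trapping region can be set up (or that one can integrate explicitly in the $(x_1,q_3)$-type variables via the error function), whereas you actually construct that argument --- forward invariance of $\{x_1\ge 1\}$, monotone decay of $y_1$, and the $\omega$-limit analysis --- which is precisely the verification the paper leaves to the reader.
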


We illustrate our findings in Fig. \ref{airExtra}. Here we have used the original orientation of time also used in Fig. \ref{aircraftNew}. The details of the charts $\kappa_{2-4}$ are delayed to the Appendix \ref{appD}. Notice that
\begin{itemize}
\item[(i)] $S_1$ in chart $\kappa_1$ is the continuation of $S_2$ from above (see \eqref{eq.S2lemma}) onto the blowup sphere as an attracting center manifold;
\item[(ii)] $W^{uu} = W^{uu}(0)$ (cf. $3^\circ$ of Lemma \ref{lem.newKappa1}) locally at \eqref{eq.L1K1} coincides (by choice) with the intersection, $S_1\cap \{\rho_1=0\}$, of $S_1$ (or more accurately $\bar S$) with the blowup sphere $\{\rho=0\}$ and acts as a separatrix of canards for $\varepsilon=0$: On one side they are without head whereas on the other side they have head (indicated by the eye). 
\item[(iii)] The orbits in blue, purple, red and green in Fig. \ref{airExtra} correspond to different initial conditions on the set $\Gamma_2\cap Q_2$ \eqref{eq.initKappa2} for $0<\varepsilon\ll 1$. The blue orbit takes a large excursion around the blowup sphere (visualized in $\kappa_{2-4}$ of Fig. \ref{airExtra}), following the equator: $\bar q=0$ of the blowup sphere for an extended period of time. This orbit represents a canard with head (like a perturbed version of the red orbit in Fig. \ref{aircraftKappa2}). The purple and the red orbit have smaller heads in comparison. Finally, the green orbit represents a canard cycle without head.
\end{itemize}}
\begin{figure}
\begin{center}
\includegraphics[width=.99\textwidth]{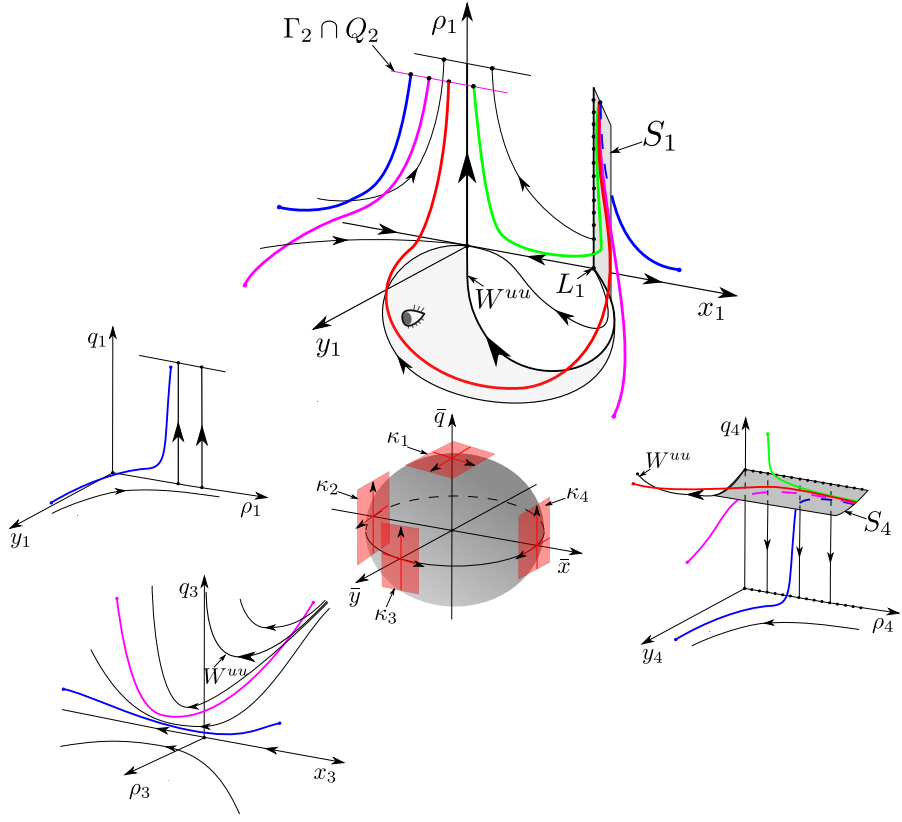}
 \caption{Illustration of the transition from canards without head to canards with head described by the blowup \eqref{eq.FinalBlowup} of $(x,y,q)=0$ in \eqref{eq.xyq}. We have here used the original orientation of time also used in Fig. \ref{aircraftNew}. Hence, the arrows should be reversed when time is as in \eqref{eq.xyq}. In the limit $\varepsilon=0$, the strong unstable manifold (strong stable manifold in the time used here) $W^{uu}=W^{uu}(0)$ acts as a separatrix of canards: On one side they are without head whereas on the other side they have head (indicated by the eye).   }
\label{airExtra}
 \end{center}
              \end{figure}
\subsection{Final remarks}\label{sec.final}
Consider now the extended system \eqref{eq.aircraftQ} on the original phase space $\{(x,y,q)\}$. Then the initial conditions on $\Gamma$ with $x\le -\nu$, $\nu>0$ small but fixed, can be followed into chart $\bar x=-1$ of \eqref{eq.aircraftBU} and then subsequently into charts $\kappa_2$ and $\bar x=1$. These orbits will for decreasing values of $x$ remain closer to $\bar q=0$ for a longer period of time before jumping towards the slow manifold $S$. Eventually the fast jump along the critical fibers will occur so that the forward orbit no longer intersects $\Lambda$ close to $S_\varepsilon\cap \Lambda$. In fact, ultimately the forward orbit does not intersect $S_\varepsilon$ at all. We skip the details of this because this can be obtained without the introduction of $q$. For the original model, which is just obtained by reversing time, this gives rise to the transition from canards with head to relaxation oscillations that (a) follow the attracting branch $\hat S_\varepsilon$, then (b) jump near the fold $F$ towards infinity. Here (c) the orbit follows $y=0$ until it (d) takes off along a critical fiber in the chart $\bar x=-1$ to return to $\hat S_\varepsilon$. 

\section{Conclusion}
We have presented a novel approach to deal with flat slow manifolds that appear in slow-fast systems. The basic idea of this method is to embed the system into a higher dimensional version for which the standard blowup approach, in the formulation of Krupa and Szmolyan, can be applied to deal with the loss of hyperbolicity. In this paper, we did not aim to put our approach into a general framework (this should be part of future work) but instead demonstrated its use on two examples: Regularization of PWS systems using $\tanh$ and a model of aircraft ground dynamics. In the future, it would also be interesting to pursue applications of our approach to areas outside the realm of the classical geometric singular perturbation theory.
\section*{Acknowledgement} {The author would like to thank C. Kuehn for pointing me in the direction \cite{aircraft}, Elena Bossolini, Morten Br{\o}ns and Peter Szmolyan for useful discussions, and finally Stephen Schecter for providing valuable feedback. }

\appendix
\section{Proof of Theorem \ref{thm.Bonet} using blowup}\label{appA}
We consider the following system
\begin{align}
 \dot x &=\varepsilon (1+\phi(\hat y)),\label{eq.eqApp}\\
 \dot{\hat y}&=2x(1+\phi(\hat y))+1-\phi(\hat y),\nonumber
\end{align}
with $\phi\in C_{ST}^{n-1}$, $n\ge 2$ (see Definition \ref{STphi}).
Here 
\begin{align*}
 S_a:\,\phi(\hat y) = \frac{1+2x}{1-2x},\quad x<0,
\end{align*}
is an attracting critical manifold. It loses hyperbolicity at $(x,\hat y)=(0,1)$ in a  fold (degenerate for $n\ge 3$, in particular, a cusp for $n=3$). Indeed, the linearization of \eqref{eq.eqApp} about $S_a$ gives an eigenvalue
\begin{align*}
 (2x-1)\phi'(\hat y).
\end{align*}
This eigenvalue vanishes at $\hat y=1$ since $\phi'(\hat y)=0$ by $3^\circ$ in Definition \ref{STphi}. For $x\in [-c,-\rho]$, $\rho$ sufficiently small, the critical manifold $S_a$ perturbs to a Fenichel slow manifold $S_{a,\epsilon}$. We will initially seek to guide $S_{a,\varepsilon}$ up until $\hat y=1$. For this we will use the following expansion of $\phi(\hat y)$ about $\hat y=1$
\begin{align*}
 \phi(\hat y) = 1-\phi^{[n]}(-\tilde y)^{n}(1+\tilde yR(\tilde y))),
\end{align*}
where
\begin{align*}
 \hat y=1+\tilde y,\quad \phi^{[n]}=\frac{(-1)^{n+1}}{n!}\phi^{(n)}(1)>0.
\end{align*}
Here $R(\tilde y)$ is smooth by Taylor's theorem. This expression is valid for $\tilde y\in (-2, 0]$ and follows directly from Definition \ref{STphi}. This gives the following equations:
 \begin{align}
 \dot x &=\varepsilon ,\label{eq.eqApp1}\\
 \dot{\tilde y}&=2x+\frac12 \phi^{[n]}(-\tilde y)^{n}(1+\tilde y Q(\tilde y))).\nonumber
\end{align}
for some smooth $Q(\tilde y)$, 
after division of the right hand side by 
\begin{align*}
1+\phi(\hat y)=2-\phi^{[n]}(-\tilde y)^n (1+\tilde yR(\tilde y)))>0\quad \text{within}\quad \tilde y\in (-2,0).
\end{align*}
Now we apply the following blowup:
\begin{align*}
x=r^{n}\bar x,\, \tilde y = r \bar y,\,\varepsilon = r^{2n-1}\bar \varepsilon,\quad (r,(\bar x,\bar y,\bar \epsilon))\in \overline{\mathbb R}_+\times S^2,
\end{align*}
and consider the following two charts:
\begin{align*}
\kappa_1:&\quad \bar y=-1:\quad x=r_1^{n}x_1,\,\tilde y = -r_1,\,\varepsilon = r_1^{2n-1} \varepsilon_1,\\
\kappa_2:&\quad \bar \varepsilon=1:\quad x=r_2^{n}x_2,\,\tilde y = r_2 y_2,\,\varepsilon = r_2^{2n-1}.
\end{align*}
\subsection{Chart $\kappa_1$}
In this chart we obtain the following system:
\begin{align*}
 \dot x_1 &=\epsilon_1+x_1 \left(2x_1+\frac12 \phi^{[n]} (1-r_1Q(-r_1))\right),\\
 \dot r_1 &=-r_1 \left(2x_1+\frac12 \phi^{[n]}(1-r_1Q(-r_1))\right),\\
 \dot \epsilon_1&= (2n-1) \left(2x_1+\frac12 \phi^{[n]} (1-r_1Q(-r_1))\right) \epsilon_1,
\end{align*}
after desingularization through the division by $r_1^{n-1}$ on the right hand side. In this chart $S_a$ becomes
\begin{align*}
 S_{a,1}:\quad x_1 = -\frac14 \phi^{[n]} (1-r_1Q(-r_1)),\quad r_1\ge 0.
\end{align*}
We consider the following box
\begin{align*}
 U_1=\{(x_1,r_1,\epsilon_1)\vert x_1\in [-\xi^{-1},0],\,r_1 \in [0,\rho],\,\epsilon_1=[0,\nu]\},
\end{align*}
with side lengths $\xi^{-1}$, $\rho$, and $\nu$. We take $\xi$, $\rho$ and $\nu$ sufficiently small. Center manifold theory applied to the equilibrium $x_1=-\frac14 \phi^{[n]}$, $r_1=\epsilon_1=0$ as a partially hyperbolic equilibrium gives the following:
\begin{proposition}
There exists a center manifold within $U_1$:
\begin{align*}
 M_1:\quad x_1 = -\frac14 \phi^{[n]} (1-r_1Q(-r_1))+\epsilon_1m_1(r_1,\epsilon_1),
\end{align*}
with
\begin{align*}
 m_1(r_1,\epsilon_1) = \frac{2}{\phi^{[n]}}+\mathcal O(r_1+\epsilon_1).
\end{align*}
The manifold $M_1$ is foliated by invariant hyperbolas $$E_1:\quad\varepsilon = r_1^{2n-1} \varepsilon_1,$$ with $\varepsilon=\text{const}.\ge 0$, and contains the critical manifold $S_{a,1}$ within $\epsilon_1=0$, as a set of critical points, and 
\begin{align*}
 C_{a,1}:\quad x_1 = -\frac14 \phi^{[n]}+\epsilon_1 m_1(0,\epsilon_1),
\end{align*}
within $r_1=0$, 
as a unique center sub-manifold.
\end{proposition}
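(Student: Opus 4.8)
The plan is to treat the equilibrium
\[
p_\ast=\left(-\tfrac14\phi^{[n]},0,0\right)
\]
as a partially hyperbolic point and to invoke the center manifold theorem. Writing $F=2x_1+\tfrac12\phi^{[n]}(1-r_1Q(-r_1))$, the three components read $\dot x_1=\epsilon_1+x_1F$, $\dot r_1=-r_1F$, $\dot\epsilon_1=(2n-1)F\epsilon_1$, and one checks that $F$ vanishes identically along $S_{a,1}$, in particular at $p_\ast$. First I would compute the linearization there. Since $r_1=\epsilon_1=0$ and $F=0$ at $p_\ast$, the only nonzero entries of the Jacobian lie in the first row, producing eigenvalues $-\tfrac12\phi^{[n]}<0$ (stable, along $x_1$, as $\phi^{[n]}>0$) together with a double zero whose eigenspace is two-dimensional. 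Hence $p_\ast$ is partially hyperbolic with a one-dimensional stable direction and a two-dimensional center, and the center manifold theorem yields a smooth local center manifold $M_1$ which, because $x_1$ is the hyperbolic coordinate, is a graph $x_1=h(r_1,\epsilon_1)$ over the center variables $(r_1,\epsilon_1)$.

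Next I would pin down the graph. Because $\{\epsilon_1=0\}$ is invariant and $S_{a,1}$ is a curve of equilibria lying in it through $p_\ast$, we have $S_{a,1}\subset M_1$, which forces $h(r_1,0)=-\tfrac14\phi^{[n]}(1-r_1Q(-r_1))$; writing $h=-\tfrac14\phi^{[n]}(1-r_1Q(-r_1))+\epsilon_1m_1(r_1,\epsilon_1)$ then defines the smooth remainder $m_1$. The value $m_1(0,0)=2/\phi^{[n]}$ I would read off directly from the center eigenspace: its kernel is spanned by $\left(\tfrac14\phi^{[n]}Q(0),1,0\right)$, tangent to $S_{a,1}$, and by $\left(2/\phi^{[n]},0,1\right)$, whose $x_1$-slope over $\epsilon_1$ is exactly $2/\phi^{[n]}$. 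Equivalently, substituting the ansatz into the invariance identity $\dot x_1=\partial_{r_1}h\,\dot r_1+\partial_{\epsilon_1}h\,\dot\epsilon_1$, using that $F=2\epsilon_1m_1$ holds on $M_1$, and matching the $\mathcal O(\epsilon_1)$ terms at $r_1=\epsilon_1=0$ reduces the relation to $1+2x_1m_1=0$ with $x_1=-\tfrac14\phi^{[n]}$, giving the same value; smoothness of $M_1$ then yields $m_1=2/\phi^{[n]}+\mathcal O(r_1+\epsilon_1)$.

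The foliation claim is a direct computation: with $H=r_1^{2n-1}\epsilon_1$ one finds $\dot H=(2n-1)r_1^{2n-1}(-F)\epsilon_1+(2n-1)r_1^{2n-1}F\epsilon_1=0$, so $E_1:\varepsilon=r_1^{2n-1}\epsilon_1$ is conserved (this is the blown-up image of $\dot\varepsilon=0$) and foliates $M_1$ by the level sets $\varepsilon=\mathrm{const}\ge0$. For the sub-manifold I would use that $\{r_1=0\}$ is invariant, since $\dot r_1=-r_1F$, so $C_{a,1}=M_1\cap\{r_1=0\}$ is an invariant curve of $M_1$, automatically of the stated form $x_1=-\tfrac14\phi^{[n]}+\epsilon_1m_1(0,\epsilon_1)$.

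The main obstacle is the assertion that $C_{a,1}$ is the geometrically \emph{unique} center sub-manifold, since center manifolds are in general non-unique. I would resolve this by examining the reduced flow inside the invariant plane $\{r_1=0\}$: there $\dot\epsilon_1=2(2n-1)\epsilon_1^2m_1>0$ for $\epsilon_1>0$ (as $m_1(0,0)>0$), so the center dynamics is nondegenerate and pushes monotonically away from $p_\ast$. Consequently $C_{a,1}$ coincides with the unique orbit leaving $p_\ast$ along the center direction $\left(2/\phi^{[n]},0,1\right)$, and any other center manifold must contain this same orbit; this fixes $C_{a,1}$ uniquely and shows it is overflowing in the $\epsilon_1>0$ direction.
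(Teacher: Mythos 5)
Your proposal is correct and follows essentially the same route as the paper: center manifold theory at the partially hyperbolic equilibrium $(x_1,r_1,\epsilon_1)=(-\tfrac14\phi^{[n]},0,0)$ (single nonzero eigenvalue $-\tfrac12\phi^{[n]}$, two-dimensional center eigenspace fixing $m_1(0,0)=2/\phi^{[n]}$), conservation of $\varepsilon=r_1^{2n-1}\epsilon_1$ for the foliation, and uniqueness of $C_{a,1}$ from the overflowing property $\dot\epsilon_1>0$ on $C_{a,1}\cap\{\epsilon_1>0\}$, which is exactly the paper's stated criterion. The only difference is that you spell out the details the paper labels ``straightforward.''
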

\begin{proof}
 The proof of this is straightforward. The uniqueness of $C_{a,1}$ follows from the fact that $\dot \epsilon_1>0$ within $C_{a,1}\cap \{\epsilon_1>0\}$.
\end{proof}
\begin{remark}
As usual $M_1\cap E_1$ is $\mathcal O(e^{-c/\varepsilon})$-close to Fenichel's slow manifold $S_{a,\varepsilon,1}$ at $r_1=\rho$ and we shall therefore view $M_1\cap E_1$ as the extension of Fenichel's slow manifold. At $\epsilon_1=\nu$ we have $r_1=(\nu^{-1}\varepsilon)^{1/(2n-1)}$ within $E_1$ and therefore the proposition provides an extension of $S_{a,\varepsilon,1}$ satisfying:
\begin{align}
 S_{a,\varepsilon,1}\cap \{\epsilon_1=\nu\}:\quad x_1 = -\frac14 \phi^{[n]} +\nu m_1(0,\nu)+\varepsilon^{1/(2n-1)} Q_1(\varepsilon^{1/(2n-1)}),\label{eq.appSa1Eps}
\end{align}
with $Q_1$ smooth. In other words, $S_{a,\varepsilon,1}$ is $\varepsilon^{1/(2n-1)}$-smoothly close to $C_{a,1}$ at $\epsilon_1=\nu$. 
 \end{remark}
 We continue $S_{a,\varepsilon,1}$ into chart $\kappa_2$ in the following. For this we will use the closeness of $S_{a,\varepsilon,1}$ to $C_{a,1}$ and therefore guide $S_{a,\varepsilon,1}$ by following $C_{a,1}$. The change of coordinates between $\kappa_1$ and $\kappa_2$ is given as
 \begin{align*}
  x_2 = \epsilon_1^{-n/(2n-1)} x_1,\quad y_2 = -\epsilon_1^{-1/(2n-1)},\quad r_2=r_1\epsilon_1^{1/(2n-1)}.
 \end{align*}
We will denote $C_{a,1}$ and $S_{a,\varepsilon,1}$ by $C_{a,2}$ and $S_{a,\varepsilon,2}$, respectively, in chart $\kappa_2$.



\subsection{Chart $\kappa_2$}
Insertion gives
\begin{align}
 \dot x_2 &=1,\label{eq.appChart2}\\
 \dot y_2&=2x_2+\frac12 \phi^{[n]} (-y_2)^n(1+r_2y_2Q(r_2y_2)),\nonumber\\
 \dot r_2&=0,\nonumber
\end{align}
after desingularization through division of the right hand side by $r_2^{n-1}$. Here $r_2=\varepsilon^{1/(2n-1)}$. We have
 \begin{lemma}
  The forward flow of $C_{a,2}\subset \{r_2=0\}$ intersects $\{y_2=0\}$ in
  \begin{align*}
   (x_2,y_2,r_2) = (c_x \eta(n),0,0),
  \end{align*}
where $\eta(n)$ only depends upon $n$ and 
\begin{align}
 c_x = \left(\frac{2}{\phi^{[n]}}\right)^{1/(2n-1)}.\label{eq.appCx}
\end{align}
 \end{lemma}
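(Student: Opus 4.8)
The plan is to exploit that the plane $\{r_2=0\}$ is invariant (since $\dot r_2=0$) and that $C_{a,2}$ lies in it, so the whole computation reduces to the scalar problem obtained from \eqref{eq.appChart2} at $r_2=0$, namely
\begin{align}
 \dot x_2 = 1, \qquad \dot y_2 = 2x_2 + \tfrac12 \phi^{[n]}(-y_2)^n. \label{eq.planA}
\end{align}
Because $\dot x_2\equiv 1$, the variable $x_2$ is a global time-like coordinate and \eqref{eq.planA} becomes the single first-order equation $dy_2/dx_2 = 2x_2 + \tfrac12\phi^{[n]}(-y_2)^n$. First I would remove the dependence on $\phi^{[n]}$ by the rescaling $x_2 = c_x\tilde x$, $y_2 = c_x^2\tilde y$ with $c_x$ as in \eqref{eq.appCx}; a direct substitution, using $\phi^{[n]}c_x^{2n-1}=2$ and dividing through by $c_x$, turns the equation into the universal form
\begin{align}
 \frac{d\tilde y}{d\tilde x} = 2\tilde x + (-\tilde y)^n, \label{eq.planUniv}
\end{align}
which depends only on $n$. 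Since $\{y_2=0\}$ corresponds to $\{\tilde y=0\}$, it then suffices to show that the rescaled image of $C_{a,2}$ is a distinguished, $n$-dependent orbit of \eqref{eq.planUniv} and to locate its first crossing of $\tilde y=0$.

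Next I would pin down which orbit of \eqref{eq.planUniv} corresponds to $C_{a,2}$ by matching asymptotics. Using the change of coordinates between $\kappa_1$ and $\kappa_2$ together with $C_{a,1}:\ x_1 = -\tfrac14\phi^{[n]} + \mathcal O(\epsilon_1)$ (within $r_1=0$), one finds that along $C_{a,2}$, as $\epsilon_1\to 0^+$,
\begin{align}
 y_2 = -\epsilon_1^{-1/(2n-1)} \to -\infty, \qquad x_2 = -\tfrac14\phi^{[n]}(-y_2)^n\bigl(1+\mathcal O(\epsilon_1)\bigr). \label{eq.planAsymp}
\end{align}
In the rescaled variables this is exactly backward-asymptoticity to the nullcline $\tilde x = -\tfrac12(-\tilde y)^n$ of \eqref{eq.planUniv} as $\tilde y\to-\infty$. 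Thus $C_{a,2}$ is (the rescaling of) the orbit of the universal equation that tracks this attracting nullcline from $\tilde y=-\infty$; being a solution of an equation involving only $n$, it is itself an $n$-dependent object, and its first crossing of $\tilde y=0$ occurs at a value $\tilde x=\eta(n)$ depending only on $n$. Scaling back gives $(x_2,y_2,r_2)=(c_x\eta(n),0,0)$, as claimed.

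It remains to justify existence, finiteness and positivity of $\eta(n)$. Finiteness is immediate: once $\tilde x>0$ and $\tilde y<0$ one has $d\tilde y/d\tilde x \ge 2\tilde x$, so integrating gives $\tilde y(\tilde x)\ge \tilde y(\tilde x_0)+\tilde x^2-\tilde x_0^2$ and $\tilde y$ reaches $0$ in finite $\tilde x$. Positivity follows because on $\{\tilde y=0\}$ one has $d\tilde y/d\tilde x = 2\tilde x$, so an upward crossing from $\tilde y<0$ forces $\tilde x>0$; this also matches $\eta(n)>0$ in the statement of Theorem \ref{thm.Bonet}. The main obstacle is the uniqueness used in the second paragraph: I must show that \eqref{eq.planUniv} admits a unique orbit backward-asymptotic to the nullcline. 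This is the attracting slow-manifold (antifunnel) statement at $r_2=0$, and it follows from the contraction estimate $\tfrac{d}{d\tilde x}\bigl(\tilde y^{(1)}-\tilde y^{(2)}\bigr) = (-\tilde y^{(1)})^n-(-\tilde y^{(2)})^n$, together with the fact that $(-\tilde y)^n$ is strictly decreasing in $\tilde y$ on $\{\tilde y<0\}$: solutions contract forward in $\tilde x$ and spread backward, singling out exactly one orbit that stays bounded along the nullcline as $\tilde x\to-\infty$. Matching this unique orbit with the asymptotics \eqref{eq.planAsymp}, which are forced by the uniqueness of the center submanifold $C_{a,1}$ (guaranteed by $\dot\epsilon_1>0$ in the preceding proposition), completes the identification and hence the proof.
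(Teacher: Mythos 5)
Your proof is correct, and its skeleton --- restrict to the invariant plane $\{r_2=0\}$, rescale $(x_2,y_2)$ by powers of $c_x$ so that \eqref{eq.appChart2} collapses to an equation depending only on $n$, and read off $\eta(n)$ as the crossing value of a distinguished orbit --- is the same as the paper's. The difference is in how the key dynamical facts are established. The paper flips the sign of the fast variable ($y_2=c_yv$ with $c_y=-2c_x^2$, time scaled by $c_x$) so as to reach the normal form $\dot u=1$, $\dot v=-u-v^n$ on $v\ge 0$, and then simply cites Proposition 3.10 of \cite{reves_regularization_2014} for the existence, uniqueness and crossing properties of the solution asymptotic to the critical curve; it proves nothing about the universal equation itself. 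You instead keep the orientation ($y_2=c_x^2\tilde y$, giving $d\tilde y/d\tilde x=2\tilde x+(-\tilde y)^n$) and supply a self-contained argument: the chart transition and the uniqueness of $C_{a,1}$ identify $C_{a,2}$ as the orbit backward-asymptotic to the nullcline $\tilde x=-\tfrac12(-\tilde y)^n$; the contraction/antifunnel estimate shows there is exactly one such orbit; and the comparison $d\tilde y/d\tilde x\ge 2\tilde x$ together with the sign of the slope on $\{\tilde y=0\}$ gives a finite crossing at positive $\tilde x$. What the paper's route buys is brevity, since the cited proposition concerns precisely this Riccati-type equation; what yours buys is self-containedness, an explicit reason why $\eta(n)$ depends only on $n$ and is positive, and --- incidentally --- internal consistency: with the paper's stated constants one actually obtains $\dot v=-u-2^{n-1}v^n$ rather than $-u-v^n$ (a harmless slip, since the factor can be absorbed into $\eta(n)$), whereas your normalization produces an $n$-only equation exactly. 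The two places where your write-up is thinnest are both routine to complete: ruling out a crossing exactly at $\tilde x=0$ (at the origin a tangential touch would have $\frac{d^2\tilde y}{d\tilde x^2}=2>0$, forcing $\tilde y>0$ just before, a contradiction), and the backward-expansion estimate in the uniqueness step, which needs the lower bound $n\min\bigl((-\tilde y^{(1)}),(-\tilde y^{(2)})\bigr)^{n-1}$ on the separation rate near the nullcline.
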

\begin{proof}
We scale $x_2$ and $y_2$ by introducing:
\begin{align*}
 x_2 = c_x u,\quad y_2 = c_y v,
\end{align*}
with $c_x$ as in \eqref{eq.appCx} and
\begin{align*}
 c_y = - 2c_x^2.
\end{align*}
This transforms \eqref{eq.appChart2} into
\begin{align*}
 \dot u &=1,\\
 \dot v &=-u-v^n,
\end{align*}
for $r_2=0$ and $v\ge 0$, after scaling time by $c_x$. The result then follows from \cite[Proposition 3.10]{reves_regularization_2014}. 
\end{proof}
Now, using the $\varepsilon^{1/(2n-1)}$-smooth closeness of $S_{a,\varepsilon,2}$ to $C_{a,2}$ at $y_2=-\nu^{-1/(2n-1)}$, we can apply regular perturbation theory and blow back down to conclude the following:
\begin{proposition}
 The forward flow of Fenichel's slow manifold $S_{a,\varepsilon}$ intersects $\hat y=1$ in $(x,\hat y)=(x_\varepsilon,1)$ with
 \begin{align}
  x_\varepsilon = r_2^n \left(\frac{2}{\phi^{[n]}}\right)^{1/(2n-1)} \eta(n)(1+r_2 Q_2(r_2)),\label{eq.SaEpsHatYEq1}
 \end{align}
with $r_2=\varepsilon^{1/(2n-1)}$ and $Q(r_2)$ smooth. 
\end{proposition}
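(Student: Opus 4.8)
The plan is to patch the two charts together: transport Fenichel's slow manifold from the entry chart $\kappa_1$ into the scaling chart $\kappa_2$ across the overlap, and then exploit the explicit $r_2=0$ orbit supplied by the preceding Lemma together with regular perturbation theory in the regular parameter $r_2=\varepsilon^{1/(2n-1)}$.

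First I would record what is known at the transition section $\{\epsilon_1=\nu\}$. By \eqref{eq.appSa1Eps}, $S_{a,\varepsilon,1}$ is $\varepsilon^{1/(2n-1)}$-smoothly close to $C_{a,1}$ there. Applying the overlap change of coordinates
\[
 x_2 = \epsilon_1^{-n/(2n-1)} x_1,\quad y_2 = -\epsilon_1^{-1/(2n-1)},\quad r_2=r_1\epsilon_1^{1/(2n-1)},
\]
at $\epsilon_1=\nu$ sends $\{\epsilon_1=\nu\}$ to the \emph{fixed} section $\{y_2=-\nu^{-1/(2n-1)}\}$ and, using $r_1=(\nu^{-1}\varepsilon)^{1/(2n-1)}$ on $E_1$, yields $r_2=\varepsilon^{1/(2n-1)}$, consistent with \eqref{eq.appChart2}. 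Substituting \eqref{eq.appSa1Eps} shows that $S_{a,\varepsilon,2}$ enters this section at a point $x_2=x_2^{\mathrm{in}}(r_2)$ that is smooth in $r_2$ and satisfies $x_2^{\mathrm{in}}(r_2)=x_2^{\mathrm{in}}(0)+\mathcal{O}(r_2)$, with $x_2^{\mathrm{in}}(0)$ the value of $C_{a,2}$. Thus, at the entry section, $S_{a,\varepsilon,2}$ is $r_2$-smoothly close to $C_{a,2}$.

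Next I would flow forward under \eqref{eq.appChart2} to $\{y_2=0\}$, treating $r_2$ as a parameter (it is conserved since $\dot r_2=0$). The desingularized field \eqref{eq.appChart2} is smooth in $(x_2,y_2,r_2)$, and the crossing of $\{y_2=0\}$ is transversal: there $\dot y_2=2x_2$, and by the Lemma the $r_2=0$ orbit $C_{a,2}$ meets $y_2=0$ at $x_2=c_x\eta(n)>0$, with $c_x=(2/\phi^{[n]})^{1/(2n-1)}$. Hence the first-hit map to $\{y_2=0\}$ is well defined and depends smoothly on the initial data and on $r_2$ near the $C_{a,2}$ orbit. Combining smooth dependence on initial conditions and parameters with the $r_2$-closeness established above, the forward image of $S_{a,\varepsilon,2}$ meets $\{y_2=0\}$ at
\[
 x_2 = c_x\eta(n)+\mathcal{O}(r_2) = c_x\eta(n)\bigl(1+r_2 Q_2(r_2)\bigr),
\]
with $Q_2$ smooth, the factorization being legitimate because $c_x\eta(n)\neq 0$.

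Finally I would blow back down. In $\kappa_2$ we have $x=r_2^n x_2$ and $\tilde y = r_2 y_2$, so $\hat y=1$ (that is $\tilde y=0$) corresponds to $y_2=0$, and
\[
 x_\varepsilon = r_2^n x_2\big|_{y_2=0} = r_2^n\left(\frac{2}{\phi^{[n]}}\right)^{1/(2n-1)}\eta(n)\bigl(1+r_2 Q_2(r_2)\bigr),
\]
which is \eqref{eq.SaEpsHatYEq1}. The main obstacle is the middle step: one must verify that \eqref{eq.appChart2} depends \emph{regularly} on $r_2$ (so that $r_2=0$ is a genuine regular limit, not merely formal) and that the passage from the entry section to $\{y_2=0\}$ occurs in a fixed compact region where the transversality at $y_2=0$ and the smooth closeness to $C_{a,2}$ persist uniformly. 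The explicit $r_2=0$ orbit furnished by the Lemma is precisely what anchors this regular-perturbation argument.
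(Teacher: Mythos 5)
Your proposal is correct and follows essentially the same route as the paper: transfer the $\varepsilon^{1/(2n-1)}$-smooth closeness of $S_{a,\varepsilon,1}$ to $C_{a,1}$ (from \eqref{eq.appSa1Eps}) through the $\kappa_1$-to-$\kappa_2$ coordinate change, use the explicit $r_2=0$ intersection of $C_{a,2}$ with $\{y_2=0\}$ from the preceding Lemma, apply regular perturbation theory in $r_2$, and blow back down. The paper states this more tersely, but your added details (conservation of $r_2$, transversality $\dot y_2 = 2x_2 > 0$ at the crossing, and the legitimacy of the factorization since $c_x\eta(n)\neq 0$) are exactly the content of its ``regular perturbation theory'' step.
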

\subsection{Scaling down}
To continue $S_{a,\varepsilon}$ beyond $\hat y=1$ and towards $x=\delta$ we scale back down using \eqref{eq.haty} and return to the $(x,y)$-variables:
\begin{align*}
 \dot x &=1,\\
 \dot y &=2x.
\end{align*}
We then use $(x,y)=(x_\varepsilon,\varepsilon)$ as an initial condition and obtain, through simple integration,
\begin{align*}
 y_{\theta}(\varepsilon) = \theta^2+\varepsilon-x_\varepsilon^2,
\end{align*}
at $x=\theta$. This then completes the proof of Theorem \ref{thm.Bonet}.
\section{Proof of Theorem \ref{thm.tanh} by direction integration}\label{direct}
The system \eqref{eq.xhaty} with $\phi(\hat y)=\tanh (\hat y)$ can be written as 
 \begin{align*}
 \frac{dy}{dx} = 2x + \frac{1-\tanh(y \epsilon^{-1})}{1+\tanh(y \epsilon^{-1})} = 2x + e^{-2y\epsilon^{-1}},
\end{align*}
upon elimination of time and returning to $y$ through \eqref{eq.haty}. Integrating this gives
\begin{align*}
 y(x) = x^2 +\frac12 \varepsilon \ln \left(\sqrt{\frac{\pi}{2\varepsilon}} \left(\textnormal{erf}\,\left(\sqrt{2\varepsilon^{-1}} x\right)+C\right)\right),
\end{align*}
where $C$ is an integration constant and
\begin{align}
 \textnormal{erf}\,(u) = \frac{2}{\sqrt{\pi}}\int_0^u e^{-s^2}ds,\label{eq.errFunction}
\end{align}
is the Gauss error function satisfying
\begin{align*}
  \hspace{2cm}{\textnormal{erf}\,(u) = \pm 1-\frac{e^{-u^2}}{\sqrt{\pi}u}\left(1+\mathcal O(u^{-2})\right)\quad \text{for}\quad u\rightarrow \pm \infty}
 \hspace{2cm}{\text{(B.1)}_\pm}
\end{align*}
By (B.1)$_-$ it follows that the solution with $C=1$:
\begin{align*}
 y(x) = x^2 +\frac12 \varepsilon \ln \left(\sqrt{\frac{\pi}{2\varepsilon}} \left(\textnormal{erf}\,\left(\sqrt{2\varepsilon^{-1}} x\right)+1\right)\right),
\end{align*}
does not contain any fast components. It therefore represents a geometrically unique slow manifold. Now using (B.1)$_+$ we obtain 
\begin{align*}
 y(\theta) = \theta^2 + \varepsilon \left(\frac14 \ln \left(2\pi \varepsilon^{-1}\right)+\mathcal O\left(e^{-2\varepsilon^{-1} \theta^2}\right)\right),
\end{align*}
in agreement Theorem \ref{thm.tanh}. 

\section{Analysis of the charts $\kappa_{1-3}$ for the proof of Theorem \ref{thm.tanh}}\label{appC}

\subsection{Chart $\kappa_1$}
Inserting \eqref{eq.kappa1tanh} into \eqref{eq.extQ} gives the following equations:
\begin{align}
\dot r_1&=-2r_1(2x_1+1),\label{eq.eqnChartK1Tanh}\\
\dot x_1 &=\epsilon_1+2x_1(2x_1+1),\nonumber\\
 \dot{\hat \varepsilon} &=-\hat \varepsilon^2 (2x_1+1),\nonumber\\
 \dot \epsilon_1 &=4\epsilon_1( 2x_1+1),\nonumber
\end{align}
after desingularization through division of the right hand side by $r_1$. 
The critical manifold $S_{a}$ \eqref{eq.Saq} becomes 
\begin{align*}
 S_{a,1}:\quad x_1= -\frac{1}{2},\,\hat \varepsilon>0,\, r_1>0,\,\epsilon_1=0.
\end{align*}
Furthermore, the set \eqref{eq.setQTanh} becomes
\begin{align}
 Q_1:\quad  r_1={e^{-2\hat \varepsilon^{-1}}}.\label{eq.Q1Tanh}
\end{align}
Since $\varepsilon=r_1^2\epsilon_1$ the system \eqref{eq.eqnChartK1Tanh} possesses another invariant:
\begin{align*}
 E_1:\quad \varepsilon=r_1^2\epsilon_1.
\end{align*}
\begin{lemma}\label{lem.Ma1}
Let 
\begin{align}
\rho(\xi)={e^{-2\xi^{-1}}}, \label{eq.rhoDelta}
\end{align}
so that $\hat \varepsilon=\xi$ corresponds to $r_1=\rho(\xi)$ on $Q_1$ cf. \eqref{eq.qq}. 
Consider then the following box
\begin{align*}
 U_1=\bigg\{(r_1,x_1,\hat \varepsilon,\epsilon_1)&\vert r_1\in [0,\rho(\xi)],\,\,x_1\in [-\chi^{-1},\chi^{-1}],\,\hat \varepsilon\in [0,\xi],\\
 &\epsilon_1\in [0,\nu]\bigg\},
\end{align*}
with side lengths $\rho(\xi)$, $2\chi^{-1}$, $\xi$, and $\nu$. Then for $\nu$ and $\chi$ sufficiently small the following holds true: Within $U_1$ there exists an attracting center manifold: 
\begin{align}
 M_{a,1}:\quad x_1 = -\frac{1}{2}+\frac12 \epsilon_1(1+\mathcal O(\epsilon_1)). \label{eq.Ma1}
\end{align}
The manifold $M_{a,1}$ contains $S_{a,1}$ within $\epsilon_1=0$ as a set of fix points. The center sub-manifold 
\begin{align}
 C_{a,1}:\quad x_1 = -\frac12  +\frac12 \epsilon_1(1+\mathcal O(\epsilon_1)),\,r_1=0,\,\hat \varepsilon=0.\label{eq.Ca1}
\end{align}
is unique as a center manifold contained within $r_1=0,\,\hat \varepsilon=0$.
\end{lemma}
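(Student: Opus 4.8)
The plan is to exploit a decoupling hidden in \eqref{eq.eqnChartK1Tanh}. The factor $2x_1+1$ is common to the $r_1$-, $\hat\varepsilon$- and $\epsilon_1$-equations, while $r_1$ and $\hat\varepsilon$ enter only their own right-hand sides. Hence the planar subsystem
\begin{align}
 \dot x_1 &=\epsilon_1+2x_1(2x_1+1),\label{eq.planarK1}\\
 \dot \epsilon_1 &=4\epsilon_1(2x_1+1),\nonumber
\end{align}
is closed, and the full four-dimensional flow is simply the product of \eqref{eq.planarK1} with the passively slaved $(r_1,\hat\varepsilon)$-directions. First I would linearise \eqref{eq.eqnChartK1Tanh} along $S_{a,1}$ (that is, $x_1=-\tfrac12,\,\epsilon_1=0$): the spectrum is $\{-2,0,0,0\}$, the single negative eigenvalue $-2$ carrying an eigenvector dominated by the $x_1$-component (when $r_1,\hat\varepsilon$ are small), and the three-dimensional centre eigenspace containing $TS_{a,1}=\mathrm{span}\{e_{r_1},e_{\hat\varepsilon}\}$ together with one transverse direction in the $(x_1,\epsilon_1)$-plane. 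Crucially, the rate $-2$ is the blown-up image of \eqref{pwsEigenvalue} and is \emph{constant}, hence uniformly bounded away from zero over the compact box $U_1$.

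Given the decoupling, I would then apply centre manifold theory directly to the planar system \eqref{eq.planarK1} at the partially hyperbolic equilibrium $(x_1,\epsilon_1)=(-\tfrac12,0)$, obtaining a locally invariant, attracting centre manifold $x_1=-\tfrac12+h(\epsilon_1)$ with $h(0)=0$. Setting $u=x_1+\tfrac12$, so that $2x_1+1=2u$, gives $\dot u=\epsilon_1-2u+4u^2$ and $\dot\epsilon_1=8\epsilon_1 u$; the invariance relation $h'(\epsilon_1)\,\dot\epsilon_1=\dot u$ then yields $\epsilon_1-2h=\mathcal O(\epsilon_1^2)$ at leading order, whence $h(\epsilon_1)=\tfrac12\epsilon_1(1+\mathcal O(\epsilon_1))$. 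Taking the trivial product with the $(r_1,\hat\varepsilon)$-directions produces $M_{a,1}$ as in \eqref{eq.Ma1}; it is attracting because the linearisation transverse to it is $\partial_u\dot u|_{u=h}=-2+\mathcal O(\epsilon_1)<0$, and it contains $S_{a,1}$ within $\epsilon_1=0$ since $h(0)=0$.

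Finally, the subspace $\{r_1=0,\,\hat\varepsilon=0\}$ is invariant (immediate from \eqref{eq.eqnChartK1Tanh}) and carries exactly the same planar subsystem \eqref{eq.planarK1}, so its centre manifold is $C_{a,1}$ in \eqref{eq.Ca1}. For uniqueness I would mirror the argument in Proposition \ref{prop.modelM1}: on $C_{a,1}$ one has $2x_1+1=2h(\epsilon_1)=\epsilon_1(1+\mathcal O(\epsilon_1))>0$, so $\dot\epsilon_1=4\epsilon_1(2x_1+1)>0$ for $\epsilon_1>0$. Thus $\epsilon_1$ is strictly increasing, the equilibrium is approached only in backward time, and $C_{a,1}$ behaves as a (weak) unstable manifold, which is therefore geometrically unique.

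The computations are routine, and the decoupling observation is what keeps them so: it dissolves the usual complication that $S_{a,1}$ is a \emph{manifold} of equilibria rather than an isolated point, since one never needs a fibred/parametrised centre manifold — the planar reduction \eqref{eq.planarK1} already governs everything, and the uniform contraction rate $-2$ makes the attractivity uniform over $U_1$. I expect the only genuinely delicate step to be the uniqueness of $C_{a,1}$, which is not a generic feature of centre manifolds and rests entirely on the expansion $\dot\epsilon_1>0$ identified above; this, however, follows the established pattern of the paper.
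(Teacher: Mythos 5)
Your proposal is correct and follows essentially the same route as the paper: center manifold theory at the partially hyperbolic equilibrium $x_1=-\tfrac12$, $r_1=\hat\varepsilon=\epsilon_1=0$ (spectrum $\{-2,0,0,0\}$, stable direction along $x_1$), the expansion $x_1=-\tfrac12+\tfrac12\epsilon_1(1+\mathcal O(\epsilon_1))$ from the invariance equation, and uniqueness of $C_{a,1}$ from the overflowing property $\dot\epsilon_1=4\epsilon_1(2x_1+1)>0$. Your explicit reduction to the closed planar $(x_1,\epsilon_1)$-subsystem is just a clean packaging of the ``straightforward computations'' the paper alludes to, and it matches the paper's own observations (Remark \ref{remark0} and Remark \ref{rem.Ma1}) that the remaining directions decouple and that $M_{a,1}$ can be taken independent of $r_1$ and $\hat\varepsilon$.
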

\begin{proof}
 The linearization \eqref{eq.eqnChartK1Tanh} about $x_1=-1,\,r_1=\epsilon_1=0$ gives $-2$ as a single non-trivial eigenvalue. The associated eigenvector, spanning the stable space, is purely in the $x_1$-direction. On the other hand, the $3D$ center space is spanned by three eigenvectors, purely in the direction of $r_1$ and $\hat \varepsilon$, respectively, and the eigenvector $(0,1,0,2)$. The result then follows from straightforward computations. The manifold $C_{a,1}$ is unique since it is overflowing.
\end{proof}

\begin{remark}\label{rem.Ma1}
 The set $M_{a,1}\cap Q_1\cap E_1$ is $\mathcal O(e^{-c/\varepsilon})$-close to $S_{a,\varepsilon}$ at $r_1=\rho(\xi)$. Notice also that $\epsilon_1=\nu$ corresponds to $r_1=\sqrt{\varepsilon \nu^{-1}}$ within $E_1$. Further restriction to $Q_1$ gives $\hat \varepsilon=\mathcal O(\ln^{-1} \varepsilon^{-1})$. The manifold $M_{a,1}\cap Q_1\cap E_1$ is therefore the continuation $S_{a,\varepsilon,1}$ of Fenichel's slow manifold $S_{a,\varepsilon}$ up to $\hat \varepsilon=\mathcal O(\ln^{-1} \varepsilon^{-1})$. We illustrate the geometry in Fig. \ref{tanhkappa1}.
  \begin{figure}
\begin{center}
\includegraphics[width=.7\textwidth]{./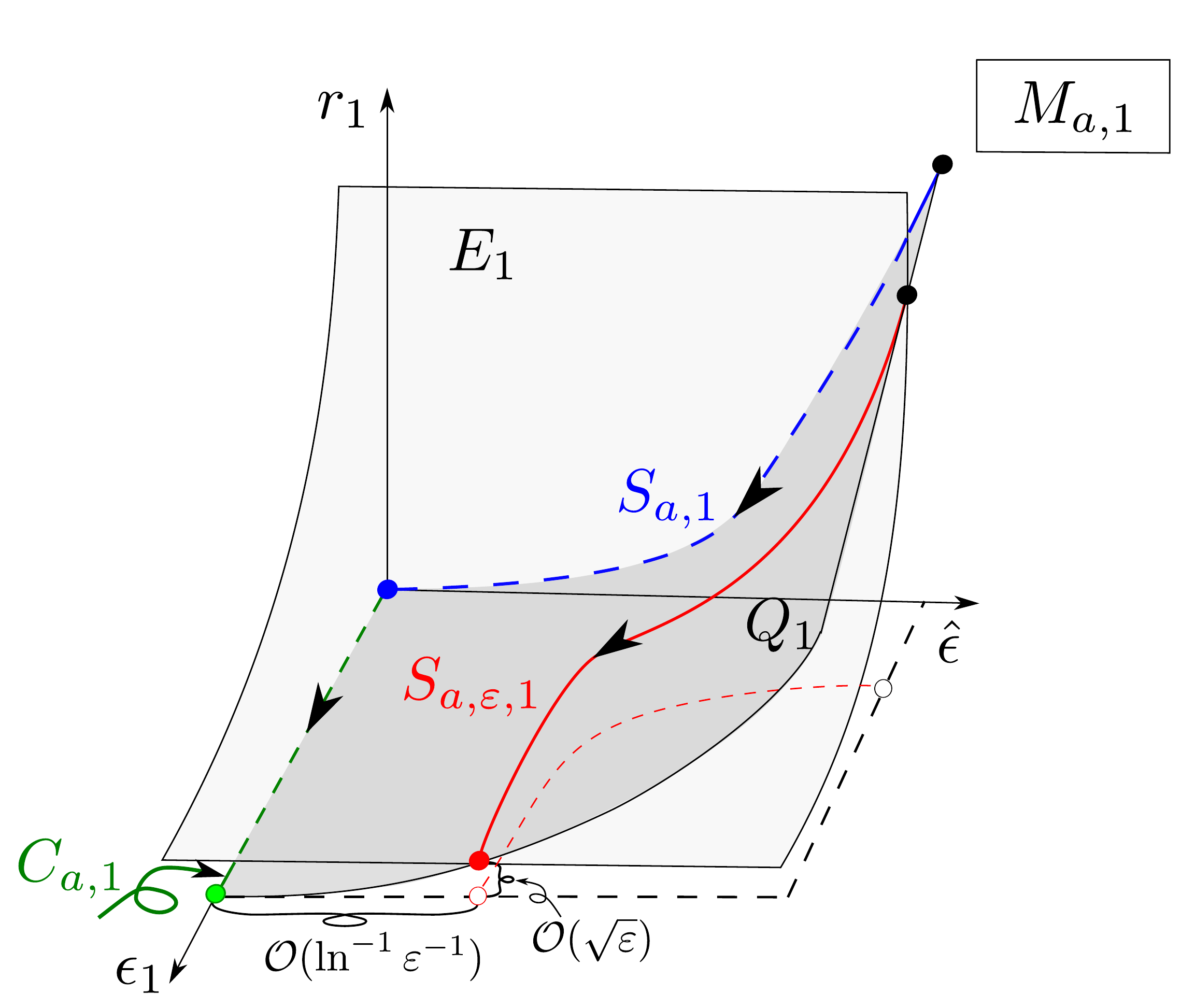}
 \caption{The reduced flow within $M_{a,1}$. The intersection of $M_{a,1}$ with $Q_1$ and $E_1$ provides an extension of the Fenichel slow manifold up to $\hat \varepsilon=\mathcal O(\ln^{-1}\varepsilon^{-1})$. }
\label{tanhkappa1}
 \end{center}
              \end{figure}
              
              In this particular case, note that the expression for $M_{a,1}$ can be taken to be independent of $r_1$. Since $C_{a,1}$ is unique this allows us to select a unique $M_{a,1}$ and therefore a unique $S_{a,\varepsilon,1}$. We shall apply this selection henceforth. 
              
              Finally, we note that the expression for $M_{a,1}$ is in agreement with the expression for $S_{a,\varepsilon}$ in \eqref{lemmaSaEpsTanh} at $\hat \varepsilon=\xi$, $r_1=\rho(\xi)=e^{-2\xi^{-1}}$. Indeed, ignoring $\mathcal O(\varepsilon^2)$-terms in \eqref{lemmaSaEpsTanh} gives
              \begin{align*}
               x = -\frac12 e^{-2\xi^{-1}} + \frac{\varepsilon}{2}e^{2\xi^{-1}},
              \end{align*}
              which in terms of $(x_1,\epsilon_1)$ can be written as 
              \begin{align*}
               r_1 x_1 = -\frac12 r_1 + \frac{r_1^2\epsilon_1}{2}r_1^{-1}= \frac12 r_1\left(-1 +\epsilon_1\right),
              \end{align*}
              using \eqref{eq.kappa1tanh} and 
             $r_1=e^{-2\xi^{-1}}$. 
              This then gives \eqref{eq.Ma1} (up to order $\mathcal O(\epsilon_1)$) upon division by $r_1$ on both sides.
\end{remark}


On $M_{a,1}$ we obtain the following reduced problem
\begin{align}
 \dot r_1 &=-2r_1,\label{eq.Mreduced}\\
\dot{\hat \varepsilon} &=-\hat \varepsilon^2,\nonumber\\
\dot \epsilon_1 &=4\epsilon_1,\nonumber
\end{align}
after division by $\epsilon_1(1+\mathcal O(\epsilon_1))$. This division desingularizes the dynamics within $S_{a,1}$, just as the passage to slow time desingularized the dynamics within the critical manifold. 

We describe \eqref{eq.Mreduced} in the following lemma:
\begin{lemma}
 Consider the reduced problem \eqref{eq.Mreduced} on $M_1$ and the mapping $$P_1:\quad (\hat \varepsilon,\epsilon_1)\mapsto (r_1^+,\hat \varepsilon^+),$$ from $\{(r_1,\hat \varepsilon,\epsilon_1)\vert r_1=\rho(\xi)\}$ to $\{(r_1,\hat \varepsilon,\epsilon_1)\vert \epsilon_1=\nu\}$ obtained by the forward flow. Then
 \begin{align}
  P_1(\xi,\rho(\xi)^{-2}\varepsilon) = 
\begin{pmatrix}
                                     \sqrt{\varepsilon \nu^{-1}}\\
                                                                        4 \ln^{-1} (\nu \varepsilon^{-1})
                                    \end{pmatrix}\label{eq.P1Out}
 \end{align}
In particular, the image $P_1(\xi,\rho(\xi)^{-2}\varepsilon)$ converges to the intersection
\begin{align}
C_{a,1}\cap\{\epsilon_1=\nu\}:\quad r_1=0,\,\hat \varepsilon=0,\,x_1 = -\frac12+\frac12 \nu(1+\mathcal O(\nu)),\,\epsilon_1=\nu,\label{eq.C1Out}
\end{align}
as $\varepsilon\rightarrow 0$. 
\end{lemma}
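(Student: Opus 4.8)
The plan is to exploit the fact that the reduced problem \eqref{eq.Mreduced} decouples completely into three scalar equations, each of which integrates in closed form. First I would write down the explicit solutions. Using the initial data prescribed on $\{r_1=\rho(\xi)\}$, namely $r_1(0)=\rho(\xi)$, $\hat\varepsilon(0)=\xi$, and (via the invariant $E_1$) $\epsilon_1(0)=\rho(\xi)^{-2}\varepsilon$, the two linear equations for $r_1,\epsilon_1$ and the Riccati-type equation for $\hat\varepsilon$ yield, in terms of the flow time $t$,
\begin{align*}
 r_1(t)=\rho(\xi)e^{-2t},\quad \hat\varepsilon(t)^{-1}=\xi^{-1}+t,\quad \epsilon_1(t)=\rho(\xi)^{-2}\varepsilon\,e^{4t}.
\end{align*}

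Next I would determine the transition time $T>0$ at which the orbit reaches the exit section $\{\epsilon_1=\nu\}$. Solving $\epsilon_1(T)=\nu$ and inserting $\rho(\xi)=e^{-2\xi^{-1}}$ from \eqref{eq.rhoDelta} gives $4T=\ln(\nu\varepsilon^{-1})-4\xi^{-1}$, that is $T=\tfrac14\ln(\nu\varepsilon^{-1})-\xi^{-1}$, which is positive for $0<\varepsilon\ll 1$. Substituting $T$ into the remaining two solutions, and watching the $\xi^{-1}$-contributions cancel against the prefactor $\rho(\xi)$, gives $r_1^+=r_1(T)=\rho(\xi)e^{-2T}=\sqrt{\varepsilon\nu^{-1}}$ and $\hat\varepsilon^+=\hat\varepsilon(T)=(\xi^{-1}+T)^{-1}=4\ln^{-1}(\nu\varepsilon^{-1})$, which is exactly \eqref{eq.P1Out}.

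Finally, letting $\varepsilon\to 0$, both components of $P_1$ tend to zero: $\sqrt{\varepsilon\nu^{-1}}\to 0$ directly, and $4\ln^{-1}(\nu\varepsilon^{-1})\to 0$ because $\ln(\nu\varepsilon^{-1})\to\infty$. Since $M_{a,1}$ meets $\{\epsilon_1=\nu\}$ along $x_1=-\tfrac12+\tfrac12\nu(1+\mathcal O(\nu))$ by \eqref{eq.Ma1}, the image of $P_1$ converges to the point $r_1=\hat\varepsilon=0$, $x_1=-\tfrac12+\tfrac12\nu(1+\mathcal O(\nu))$, $\epsilon_1=\nu$, which is precisely $C_{a,1}\cap\{\epsilon_1=\nu\}$ in \eqref{eq.C1Out}.

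There is no genuine analytic obstacle here; the statement is pure explicit integration of a decoupled system. The only care required is bookkeeping: correctly propagating the entry value $\epsilon_1(0)=\rho(\xi)^{-2}\varepsilon$ dictated by the invariant $E_1$, and verifying the cancellation of the $\xi^{-1}$ terms so that the final expressions depend only on $\varepsilon$ and $\nu$, consistent with $\xi$ merely labelling the entry section.
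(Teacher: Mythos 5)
Your proposal is correct and follows essentially the same route as the paper: explicit integration of the decoupled reduced system \eqref{eq.Mreduced}, determination of the travel time $T$ from the $\epsilon_1$-equation, and cancellation of the $\xi^{-1}$-terms via $\rho(\xi)=e^{-2\xi^{-1}}$ to obtain \eqref{eq.P1Out}. The only cosmetic difference is that you substitute $\rho(\xi)$ into $T$ immediately, whereas the paper writes $T=\tfrac14\ln\left(\rho^2\nu\varepsilon^{-1}\right)$ and simplifies at the end.
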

\begin{proof}
From the $r_1$- and the $\epsilon_1$-equation we obtain a travel time of $T=\frac14 \ln \left(\rho^2 \nu \varepsilon^{-1}\right)$. Integrating the $\hat \varepsilon$-equation then gives
\begin{align*}
 \hat \varepsilon(T) = \frac{1}{\xi^{-1}+T}=\left(\xi^{-1} +\frac14 \ln \left(\rho^2 \nu \varepsilon^{-1}\right)\right)^{-1}.
\end{align*}
Using \eqref{eq.rhoDelta} this then simplifies to
                                    \begin{align*}
                                     4 \ln^{-1} (\nu \varepsilon^{-1}).
                                    \end{align*}
\end{proof}

We continue the point \eqref{eq.P1Out} forward in time by moving to chart $\kappa_2$. The coordinate change between $\kappa_1$ and $\kappa_2$ is easily obtained from \eqref{eq.kappa1tanh} and \eqref{eq.kappa2tanh}. It is given as
\begin{align}
 q_2 = 1/\sqrt{\epsilon_1},\quad x_2 = x_1/\sqrt{\epsilon_1},\quad r_2=r_1 \sqrt{\epsilon_1},\label{eq.k1k2}
\end{align}
valid for $\epsilon_1>0$. 
\subsection{Chart $\kappa_2$}
Inserting \eqref{eq.kappa2tanh} into \eqref{eq.extQ} gives
\begin{align}
 \dot x_2 &= 1,\label{eq.eqChartK2Tanh}\\
 \dot{\hat \varepsilon} &=-\hat \varepsilon^2 (2x_2+q_2),\nonumber\\
 \dot q_2 &=-2q_2(2x_2+q_2),\nonumber\\
 \dot r_2 &=0,\nonumber
\end{align}
after division by $r_2$. In chart $\kappa_2$ the point \eqref{eq.P1Out}, using \eqref{eq.k1k2}, becomes:
\begin{align}
\hat \varepsilon=4 \ln^{-1} (\nu \varepsilon^{-1}),\,x_2 = \nu^{-1/2}\left(-\frac12 + \frac12 \nu(1+\mathcal O(\nu))\right),\,q_2=\nu^{-1/2},\label{eq.P2In}
\end{align}
and $r_2=\sqrt{\varepsilon}$. We will in this section guide \eqref{eq.P2In} up until the section $\{x_2=\eta^{-1/2}\}$ using the forward flow of \eqref{eq.eqChartK2Tanh}. For this we first note that \eqref{eq.C1Out} within $\kappa_2$ becomes
\begin{align*}
x_2 = \nu^{-1/2}\left(-\frac14+\nu(1+\mathcal O(\nu))\right),\,q_2=\nu^{-1/2},
\end{align*}
contained within $r_2=0,\hat \varepsilon=0$. We therefore consider $r_2=0,\,\hat \varepsilon=0$ and obtain the following system
\begin{align*}
 \frac{dq_2}{dx_2} &= -2q_2(2x_2+q_2),
\end{align*}
upon elimination of time. Integrating this first order ODE, gives a solution
\begin{align}
 q_2 =m_2(x_2)\equiv \frac{2e^{-2x_2^2}}{\sqrt{2\pi} (1+\text{erf}\,(\sqrt{2}x_2))},\label{eq.m2Eqn}
\end{align}
with $\textnormal{erf}$ the Gauss error function \eqref{eq.errFunction},
which corresponds to $C_2=\kappa_{21}(C_1)$. 
The orbit \eqref{eq.m2Eqn} intersects $\{x_2=\eta^{-1/2}\}$ in
\begin{align}
 r_2=0,\,\hat \varepsilon=0,\,x_2 = \eta^{-1/2},\,q_2=m_2(\eta^{-1/2}).\label{eq.C2Out}
\end{align}
Hence, we obtain
\begin{lemma}\label{lem.Ma2}
 The forward flow of manifold $M_2\equiv \kappa_{21}(M_1)$ intersects $\{x_2=\eta^{-1/2}\}$ with
\begin{align}
q_2&=m_2(\eta^{-1/2}),\label{eq.q2M2}\\
\hat \varepsilon &= \left[\frac14 \ln \varepsilon^{-1} +\left( \eta^{-1} + \frac{1}{2} \ln \left(\frac{\sqrt{2\pi}}{2} \left(1+\textnormal{erf}\,\left(\sqrt{2\eta^{-1}}\right)\right)\right)\right)\right]^{-1}.
\label{eq.yM2}
 \end{align}
\end{lemma}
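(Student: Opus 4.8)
The plan is to exploit that the first equation in \eqref{eq.eqChartK2Tanh} is trivial, $\dot x_2=1$, so that $x_2$ serves as the independent ``time'' variable and the whole passage reduces to quadratures. Along any orbit of \eqref{eq.eqChartK2Tanh} the $q_2$-equation
\begin{align*}
 \frac{dq_2}{dx_2}=-2q_2(2x_2+q_2)
\end{align*}
decouples completely from $\hat\varepsilon$ and $r_2$; it is a scalar Riccati equation whose solution $q_2=m_2(x_2)$ is precisely \eqref{eq.m2Eqn}, that is $C_2$. First I would note that for increasing $x_2$ the solution $m_2$ is strongly attracting: writing $q_2=m_2+p$ gives $p'=-2(2x_2+2m_2)p-2p^2$, and since $m_2(s)\sim-2s$ as $s\to-\infty$ the linear coefficient $-2(2x_2+2m_2)\sim 4x_2\to-\infty$ forces rapid contraction onto $m_2$. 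As the incoming point \eqref{eq.P2In} lies on $M_2=\kappa_{21}(M_1)$, which tends to $C_2$ as $\varepsilon\to0$, the $q_2$-component is swept onto $m_2$ and arrives at $\{x_2=\eta^{-1/2}\}$ with $q_2=m_2(\eta^{-1/2})$ up to a negligible remainder, which is \eqref{eq.q2M2}.

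The heart of the matter is the $\hat\varepsilon$-equation. Setting $w=\hat\varepsilon^{-1}$ converts $\dot{\hat\varepsilon}=-\hat\varepsilon^2(2x_2+q_2)$ into the explicitly integrable equation
\begin{align*}
 \frac{dw}{dx_2}=2x_2+q_2.
\end{align*}
Feeding in $q_2\approx m_2$, the right-hand side integrates in closed form: one has $\int 2s\,ds=s^2$, while a direct differentiation of \eqref{eq.m2Eqn} using \eqref{eq.errFunction} shows
\begin{align*}
 m_2(s)=\frac{d}{ds}\Bigl[\tfrac12\ln\bigl(1+\textnormal{erf}(\sqrt2\,s)\bigr)\Bigr].
\end{align*}
Denoting by $x_2^{\mathrm{in}}$ the incoming $x_2$-value in \eqref{eq.P2In} and by $w^{\mathrm{in}}=\tfrac14\ln(\nu\varepsilon^{-1})$ the corresponding value of $w$, integration from $x_2^{\mathrm{in}}$ to $\eta^{-1/2}$ yields
\begin{align*}
 w(\eta^{-1/2})=w^{\mathrm{in}}+\bigl(\eta^{-1}-(x_2^{\mathrm{in}})^2\bigr)+\tfrac12\ln\bigl(1+\textnormal{erf}(\sqrt2\,\eta^{-1/2})\bigr)-\tfrac12\ln\bigl(1+\textnormal{erf}(\sqrt2\,x_2^{\mathrm{in}})\bigr).
\end{align*}

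It then remains to remove the section-dependent quantity $x_2^{\mathrm{in}}=\nu^{-1/2}(-\tfrac12+\tfrac12\nu(1+\mathcal O(\nu)))$. Because $x_2^{\mathrm{in}}\to-\infty$ as $\nu\to0$, I would substitute the large-argument asymptotics of the error function, $1+\textnormal{erf}(\sqrt2\,x_2^{\mathrm{in}})=\tfrac{e^{-2(x_2^{\mathrm{in}})^2}}{\sqrt{2\pi}\,|x_2^{\mathrm{in}}|}(1+\mathcal O((x_2^{\mathrm{in}})^{-2}))$. The quadratic terms then cancel,
\begin{align*}
 -(x_2^{\mathrm{in}})^2-\tfrac12\ln\bigl(1+\textnormal{erf}(\sqrt2\,x_2^{\mathrm{in}})\bigr)=\tfrac12\ln\bigl(\sqrt{2\pi}\,|x_2^{\mathrm{in}}|\bigr)+\mathcal O\bigl((x_2^{\mathrm{in}})^{-2}\bigr),
\end{align*}
and combining with $w^{\mathrm{in}}=\tfrac14\ln\varepsilon^{-1}+\tfrac14\ln\nu$ and $\tfrac12\ln(\sqrt{2\pi}\,|x_2^{\mathrm{in}}|)=\tfrac12\ln\tfrac{\sqrt{2\pi}}{2}-\tfrac14\ln\nu+\mathcal O(\nu)$ makes every $\ln\nu$-contribution disappear. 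This leaves
\begin{align*}
 w(\eta^{-1/2})=\tfrac14\ln\varepsilon^{-1}+\eta^{-1}+\tfrac12\ln\Bigl(\tfrac{\sqrt{2\pi}}{2}\bigl(1+\textnormal{erf}(\sqrt{2\eta^{-1}})\bigr)\Bigr),
\end{align*}
and inverting $\hat\varepsilon=w^{-1}$ reproduces \eqref{eq.yM2}.

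The main obstacle is not the quadrature but the error control. I must justify replacing $q_2$ by $m_2$ inside the $w$-integral and show that the discarded contribution $\int(q_2-m_2)\,dx_2$ is harmless; this follows from the exponential contraction toward $m_2$ established above, together with the fact that the incoming discrepancy $q_2-m_2$ vanishes as $\varepsilon\to0$ since \eqref{eq.P2In} sits on $M_2\to C_2$. I also have to check that the $\mathcal O(\nu)$ and exponentially small remainders produced by the error-function asymptotics are absorbed into the smooth and exponentially small corrections of Theorem \ref{thm.tanh}. Finally, the very cancellation of the $\ln\nu$-terms is what certifies that \eqref{eq.yM2}, which carries no trace of the artificial section parameter $\nu$, is the correct $\nu$-independent answer.
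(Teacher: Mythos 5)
Your quadrature is the same as the paper's: use $x_2$ as the time variable, note that the $(x_2,q_2)$-subsystem of \eqref{eq.eqChartK2Tanh} decouples with solution $q_2=m_2(x_2)$, and integrate $d(\hat\varepsilon^{-1})/dx_2=2x_2+q_2$ using the antiderivative $\int m_2\,dx_2=\tfrac12\ln\bigl(1+\textnormal{erf}(\sqrt2\,x_2)\bigr)$. The gap lies in how you handle the entry point, and it affects the strength of what you prove. The lemma asserts \emph{exact} equalities, and the paper obtains them because the incoming point \eqref{eq.P2In} lies \emph{exactly} on $C_2$: by Remark \ref{rem.Ma1} the center manifold $M_{a,1}$ is chosen with graph depending only on $\epsilon_1$ (the $(x_1,\epsilon_1)$-subsystem of \eqref{eq.eqnChartK1Tanh} decouples and $C_{a,1}$ is unique), so under \eqref{eq.k1k2} the $(x_2,q_2)$-projection of $M_2$ \emph{is} the curve $q_2=m_2(x_2)$, and the entry point satisfies $m_2(x_{20})=\nu^{-1/2}$ identically, with $x_{20}=m_2^{-1}(\nu^{-1/2})$. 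Feeding this exact relation into the quadrature makes $-x_{20}^2-\tfrac12\ln\bigl(1+\textnormal{erf}(\sqrt2\,x_{20})\bigr)$ collapse to $\tfrac12\ln\tfrac{\sqrt{2\pi}}{2}-\tfrac14\ln\nu$ as an identity, so all $\nu$- and $x_{20}$-dependence cancels exactly and no asymptotics of $\textnormal{erf}$ are needed at the entry end.

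You instead (i) treat the entry value of $q_2$ as only approximately on $m_2$ and invoke an unquantified contraction ("negligible remainder"), and (ii) evaluate the entry terms with the large-argument expansion of $\textnormal{erf}$, producing errors of size $\mathcal O(\nu)$ and $\mathcal O\bigl((x_2^{\mathrm{in}})^{-2}\bigr)=\mathcal O(\nu)$. These are not small: $\nu$ is a \emph{fixed} constant (the size of the exit face $\epsilon_1=\nu$ of chart $\kappa_1$), not a quantity tending to $0$ with $\varepsilon$. So your argument proves \eqref{eq.yM2} only up to an additive error that is $\mathcal O(\nu)$ uniformly in $\varepsilon$, and such an error cannot, contrary to your closing remark, be absorbed into the correction of Theorem \ref{thm.tanh}, whose remainder is $R(\sqrt{\varepsilon})=\mathcal O(e^{-c\varepsilon^{-1}})$; it would also leave a spurious $\nu$-dependence in a quantity (the intersection of a fixed invariant manifold with $\{x=\theta\}$) that cannot depend on the artificial section parameter. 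Note also that the entry values of $(x_2,q_2)$ in \eqref{eq.P2In} are $\varepsilon$-independent, so the discrepancy $q_2-m_2(x_2)$ at entry does not "vanish as $\varepsilon\to0$" as you claim: it is either exactly zero (the paper's choice of $M_{a,1}$) or a fixed nonzero quantity, in which case your contraction step would need genuine quantitative control. The approximate cancellation of the $\ln\nu$-terms you observe is thus necessary but not sufficient; to prove the lemma as stated you must use the exact identity $m_2(x_{20})=\nu^{-1/2}$.
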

\begin{proof}
 The expression \eqref{eq.q2M2} follows directly from \eqref{eq.C2Out} and the fact that the right hand side of \eqref{eq.eqChartK2Tanh} is independent of $r_2$. To obtain \eqref{eq.yM2} we consider
 \begin{align}
 \frac{d\hat \varepsilon}{dx_2} &= -\hat \varepsilon^{2}(2x_2+m(x_2)),\label{eq.dydx2}
\end{align}
 obtained by inserting \eqref{eq.q2M2} into \eqref{eq.eqChartK2Tanh} 
 We can then integrate \eqref{eq.dydx2} from $$x_2=x_{20}\equiv m_2^{-1}(\nu^{-1/2}),$$ to $x_2=\eta^{-1/2}.$ For this we use the fact that
 \begin{align}
  \int m_2(x_2) dx_2 = \ln \left(1+\text{erf}\,(\sqrt{2x_2})\right) -\ln \frac{2}{\sqrt{2\pi}}= \ln \frac{e^{-2x_2^2}}{m_2(x_2)},\label{eq.intm2}
 \end{align}
 where we in the last equality have used Eq. \eqref{eq.m2Eqn}. Therefore
%
%
\begin{align*}
\hat \varepsilon^{-1} &= \frac14 \ln (\nu \varepsilon^{-1})+(\eta^{-1}-x_{20}^2)+\frac{1}{2}\left(\ln \frac{e^{-2\eta^{-1}}}{m_2(\eta^{-1/2})} - \ln \frac{e^{-2x_{20}^2}}{\nu^{-1/2}}\right)\\
&=\frac14 \ln \varepsilon^{-1} + \eta^{-1} + \frac{1}{2} \ln \frac{e^{-2\eta^{-1}}}{m_2(\eta^{-1/2})},
\end{align*}
using the initial condition from \eqref{eq.P2In} and the second equation in \eqref{eq.intm2}. Using \eqref{eq.m2Eqn} we then obtain the expression in \eqref{eq.yM2}.
\end{proof}
\begin{remark}
If $M_{a,2}$ were not independent of $r_2$, then one would have to apply regular perturbation theory in this chart. This would give rise to $\mathcal O(r_2)$-terms in \eqref{eq.q2M2} and \eqref{eq.yM2}.
\end{remark}


We illustrate the dynamics in Fig. \ref{tanhkappa2}. Finally, we move to chart $\kappa_3$. Cf. \eqref{eq.kappa2tanh} and \eqref{eq.kappa3tanh} the coordinate change between the charts $\kappa_2$ and $\kappa_3$ is given as
\begin{align}
 q_3 = x_2^{-1}q_2,\quad r_3=r_2x_2,\quad \epsilon_3=x_2^{-2},\label{eq.k2k3}
\end{align}
valid for $x_2>0$.
\begin{figure}
\begin{center}
\includegraphics[width=.7\textwidth]{./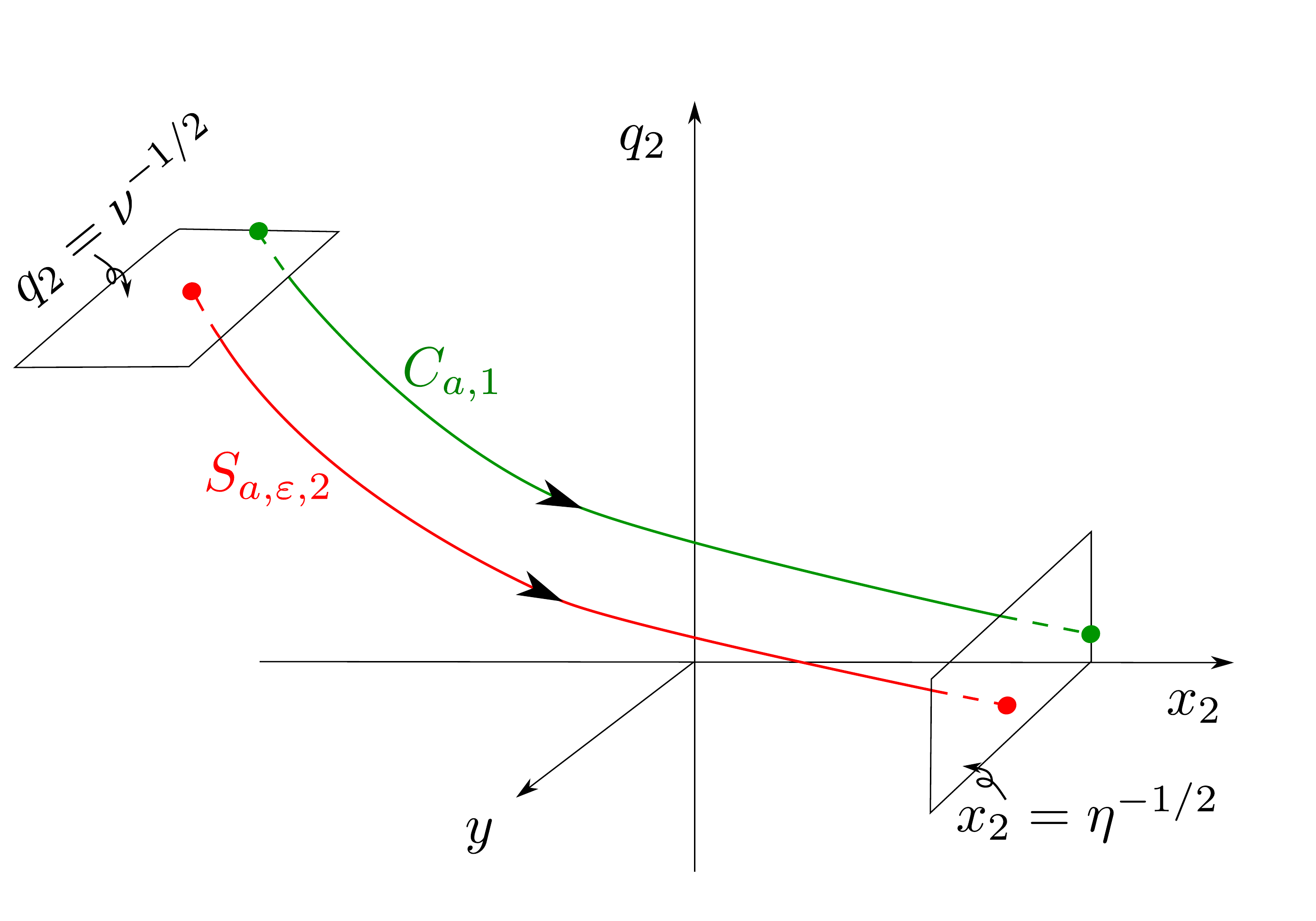}
 \caption{The dynamics within chart $\kappa_2$. }
\label{tanhkappa2}
 \end{center}
              \end{figure}

\subsection{Chart $\kappa_3$}
Inserting \eqref{eq.kappa2tanh} into \eqref{eq.extQ} gives
\begin{align}
 \dot r_3 &= r_3\epsilon_3,\label{eq.tanhK3}\\
 \dot{\hat \varepsilon} &=-\hat \varepsilon^2\left(2+q_3\right),\nonumber\\
 \dot q_3 &=-2q_3\left(2(2+q_3)+\epsilon_3\right),\nonumber\\
 \dot \epsilon_3 &=-2\epsilon_3^2, \nonumber
\end{align}
after desingularization through division of the right hand side by $r_3$. We consider the following box:
\begin{align*}
 U_3=\bigg\{(r_3,\hat \varepsilon,q_3,\epsilon_3)&\vert r_3 \in [0,\theta],\,\hat \varepsilon \in [0,\xi],\\
 &, q_3\in [0,\theta^{-1}],\,\epsilon_3\in [0,\eta]\bigg\},
\end{align*}
with side lengths $\theta$, $\xi$, $\theta^{-1}$ and $\eta$. 
The point \eqref{eq.C2Out} becomes
\begin{align}
 r_3=0,\,\hat \varepsilon=0,\,q_3 = \eta^{1/2}m_2(\eta^{-1/2}),\,\epsilon_3=\eta,\label{eq.C3In}
\end{align}
using the coordinate transformation in \eqref{eq.k2k3}. 
The set 
\begin{align}
N_3:\quad q_3=0,\label{eq.N3}
\end{align}
is an attracting (but inflowing and non-unique) center manifold of \eqref{eq.tanhK3}. We use Fenichel's normal form \cite{jones_1995} to straighten out the stable fibers:
\begin{lemma}\label{lem.FNF}
 For $\varepsilon$ sufficiently small, there exists a smooth transformation $$(r_3,\hat \varepsilon,q_3,\epsilon_3)\mapsto (r_3, \breve \varepsilon,q_3,\epsilon_3),$$ within $U_3$:
 \begin{align}
\hat \varepsilon &= \frac{\breve \varepsilon}{1+\breve \varepsilon \left(-\frac12 \ln \left(1+\frac{q_3}{2}S_3(\epsilon_3)\right)\right)},\label{eq.fiber}
 \end{align}
 with
 \begin{align}
  S_3(\epsilon_3) = \sqrt{2\pi \epsilon_3^{-1}} e^{2\epsilon_3^{-1}}\left(1-\textnormal{erf}\,\left(\sqrt{2\epsilon_3^{-1}}\right)\right)=1+\mathcal O(\epsilon_3),\label{eq.S3}
 \end{align}
transforming \eqref{eq.tanhK3} into:
\begin{align}
 \dot r_3 &=r_3\epsilon_3,\label{eq.reduced3}\\
 \dot{\breve \varepsilon} &=-2\breve \varepsilon^2,\nonumber\\
 \dot q_3 &=-2q_3\left(2(2+q_3)+\epsilon_3\right),\nonumber\\
 \dot \epsilon_3 &=-2\epsilon_3^2.\nonumber
\end{align}

\end{lemma}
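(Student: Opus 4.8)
The content of this lemma is really the \emph{explicit} fiber-straightening map; the existence and smoothness of \emph{some} such transformation is already guaranteed by Fenichel's normal form theorem \cite{jones_1995} applied to the attracting (inflowing) center manifold $N_3=\{q_3=0\}$ in \eqref{eq.N3}. The plan is therefore to produce the closed form directly and then verify it. The guiding observation is that $q_3$ enters the base equations of \eqref{eq.tanhK3} only through $\dot{\hat\varepsilon}=-\hat\varepsilon^2(2+q_3)$, and that the $(q_3,\epsilon_3)$-subsystem is autonomous and decoupled from $(r_3,\hat\varepsilon)$. Hence I would look for a transformation that alters \emph{only} $\hat\varepsilon$. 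Since the $\hat\varepsilon$-equation is of Riccati/Bernoulli type, I would linearise it by passing to $u=\hat\varepsilon^{-1}$, for which $\dot u = 2+q_3$. Writing $w=\breve\varepsilon^{-1}=u-\psi(q_3,\epsilon_3)$ turns the desired conclusion $\dot{\breve\varepsilon}=-2\breve\varepsilon^2$ (that is, $\dot w=2$) into the single scalar requirement
\begin{align*}
\dot\psi = q_3 \qquad\text{along the flow of the }(q_3,\epsilon_3)\text{-subsystem}.
\end{align*}

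This is a linear transport equation $\dot q_3\,\partial_{q_3}\psi+\dot\epsilon_3\,\partial_{\epsilon_3}\psi=q_3$ for $\psi(q_3,\epsilon_3)$, whose characteristics are exactly the orbits of the $(q_3,\epsilon_3)$-subsystem. Because I want the change of coordinates to reduce to the identity on $N_3$ (so $\psi\to0$ as $q_3\to0$), I would solve it with the ansatz $\psi=-\tfrac12\ln(1+\tfrac{q_3}{2}S_3(\epsilon_3))$, which is natural since the coefficient of $\partial_{q_3}\psi$ is affine in $q_3$. Substituting and using $\dot q_3,\dot\epsilon_3$ from \eqref{eq.tanhK3}, the quadratic-in-$q_3$ terms cancel and the condition collapses to a first-order linear ODE for $S_3$; in the variable $s=\epsilon_3^{-1}$ (along which $\dot s=2$) it reads $S_3'(s)=(2+\tfrac{1}{2s})S_3-2$. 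Its integrating factor is $s^{-1/2}e^{-2s}$, and the unique solution that stays bounded as $s\to\infty$ (equivalently $\epsilon_3\to0^+$) is precisely the complementary-error-function expression \eqref{eq.S3}. Inverting $w=u-\psi$ then yields the displayed closed form \eqref{eq.fiber} for $\hat\varepsilon$ as a function of $(\breve\varepsilon,q_3,\epsilon_3)$, and since the map leaves $r_3,q_3,\epsilon_3$ untouched while $\hat\varepsilon$ occurs in no other equation, the remaining equations in \eqref{eq.reduced3} are inherited unchanged from \eqref{eq.tanhK3}.

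The last point to settle — and the step I expect to be the real obstacle — is the regularity of the transformation up to the corner $\{q_3=0\}\cap\{\epsilon_3=0\}$ of $U_3$, where the two factors of $S_3$ individually degenerate ($e^{2\epsilon_3^{-1}}\to\infty$ while $1-\textnormal{erf}(\sqrt{2\epsilon_3^{-1}})\to0$). Here I would invoke the full asymptotic expansion of the complementary error function, $1-\textnormal{erf}(u)\sim \frac{e^{-u^2}}{\sqrt\pi\,u}\sum_{k\ge0}(-1)^k\frac{(2k-1)!!}{(2u^2)^k}$, to show that $S_3$ extends as a $C^\infty$ function to $\epsilon_3=0$ with $S_3(\epsilon_3)=1+\mathcal O(\epsilon_3)$ as stated in \eqref{eq.S3} (smooth but not analytic there, consistent with the $\sqrt\varepsilon$- and $\ln\varepsilon^{-1}$-terms that appear elsewhere in the analysis). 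Granting this, $\psi$ and hence \eqref{eq.fiber} are smooth and equal to the identity on $N_3$, so the map is a genuine local diffeomorphism for $\varepsilon$ sufficiently small, which completes the proof.
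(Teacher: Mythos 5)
Your proposal is correct and follows essentially the same route as the paper: existence and smoothness from Fenichel's normal form, the identical change of variables (your additive shift $\breve\varepsilon^{-1}=\hat\varepsilon^{-1}-\psi$ is exactly the paper's $\hat\varepsilon = \breve\varepsilon/(1+\breve\varepsilon W)$ with $\psi=W$), the same transport equation $\dot W = q_3$ along the decoupled $(q_3,\epsilon_3)$-flow, and the same erf-asymptotics for the smoothness of $S_3$ at $\epsilon_3=0$; the only (negligible) difference is that the paper solves the transport equation by the method of characteristics whereas you verify the logarithmic ansatz by reducing to the linear ODE for $S_3$ and selecting the bounded solution. One remark: your cancellation of the $q_3$-quadratic terms and the resulting ODE $S_3'(s)=\bigl(2+\tfrac{1}{2s}\bigr)S_3-2$ are consistent with $\dot q_3=-q_3\left(2(2+q_3)+\epsilon_3\right)$, which is the equation used in the paper's own proof and the one actually obtained by desingularizing in chart $\kappa_3$ — the extra factor $2$ printed in \eqref{eq.tanhK3} is a typo, so your computation is the correct one.
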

\begin{proof}
 The existence of the transformation follows from Fenichel's normal form \cite{jones_1995}. It straightens out the stable fibers of $N_3$. The expression in \eqref{eq.fiber} follows by considering the $r_3=0$ system:
 \begin{align*}
 \dot{\hat \varepsilon} &=-\hat \varepsilon^2(2+q_3),\nonumber\\
 \dot q_3 &=-q_3(2(2+q_3)+\epsilon_3),\nonumber\\
 \dot \epsilon_3 &=-2\epsilon_3^2,\nonumber
 \end{align*}
and applying a transformation $$\hat \varepsilon = \frac{\breve \varepsilon}{1+\breve \varepsilon W(q_3,\epsilon_3) },$$ with $W$ having the property that
\begin{align*}
\dot{\breve{\varepsilon}} &=-2\breve{\varepsilon}^2.
\end{align*}
This gives rise to the following equation for $W(q_3,\epsilon_3)$
\begin{align*}
 q_3 = \partial_{q_3} W \dot q_3+\partial_{\epsilon_3} W \dot \epsilon_3.
\end{align*}
Using the method of characteristic we obtain a solution
\begin{align}
 W = -\frac12 \ln \left(1+\frac{q_3}{2}S_3(\epsilon_3)\right),\label{eq.Qeqn}
\end{align}
with $S_3$ as in \eqref{eq.S3}.
The smoothness of $S_3$ and the expansion in \eqref{eq.S3} follows from the following asymptotics
of $\text{erf}\,(u)$ for $u\rightarrow \infty$:
\begin{align*}
 \textnormal{erf}\,(u) = 1-\frac{e^{-u^2}}{\sqrt{\pi}u}\left(1+\mathcal O(u^{-2})\right).
\end{align*}

\end{proof}
Using \eqref{eq.k2k3} we can write \eqref{eq.Qeqn} as
\begin{align*}
 W = -\frac12 \ln \left(1+\frac{x_2^{-1} q_2}{2}S_3(x_2^{-2})\right).
\end{align*}
In particular for $q_2=m_2(x_2)$ with $m_2$ as in \eqref{eq.m2Eqn}
\begin{align*}
 W = -\frac12 \ln \left(1+\frac{1-\textnormal{erf}\,(\sqrt{2}x_2)}{1+\textnormal{erf}\,(\sqrt{2}x_2)}\right)=-\frac12 \ln \frac{2}{\textnormal{erf}\,(\sqrt{2}x_2)+1}.
\end{align*}
Therefore \eqref{eq.yM2} becomes
\begin{align}
 \breve \varepsilon^{-1} &= \frac14 \ln \varepsilon^{-1} +\bigg( \eta^{-1} + \frac{1}{2} \ln \left(\frac{\sqrt{2\pi}}{2} (1+\textnormal{erf}\,(\sqrt{2\eta^{-1}}))\right)\nonumber\\
 &+\frac12 \ln \frac{2}{1+\textnormal{erf}\,(\sqrt{2\eta^{-1}})}\bigg)\nonumber\\
 &=\frac14  \ln \varepsilon^{-1} +\left( \eta^{-1} + \frac14 \ln (2\pi)\right),\label{eq.tildeyM2}
\end{align}
in terms of $\breve \varepsilon$. 

%
%
%

\begin{lemma}\label{lem.P3}
 Consider the reduced problem \eqref{eq.reduced3}$_{q_3=0}$ on $W_{loc}^s(N_3)$ and the mapping $$P_3:\quad (r_3,\breve \varepsilon)\mapsto (\breve \varepsilon^+,\epsilon_3^+)$$ from $\{(r_3,\breve \varepsilon,\epsilon_3)\vert \epsilon_3=\eta\}$ to $\{(r_3,\breve \varepsilon,\epsilon_3)\vert r_3=\theta\}$ obtained by the forward flow. Then
 \begin{align*}
  P_3(\sqrt{\varepsilon \eta^{-1}},\breve \varepsilon) = \begin{pmatrix}
                                    \left(\breve \varepsilon(0)^{-1} +\varepsilon^{-1}\theta^2 - \eta^{-1}\right)^{-1}\\
                                     \varepsilon \theta^{-2}
                                    \end{pmatrix}
 \end{align*}
\end{lemma}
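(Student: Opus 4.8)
The plan is to exploit the fact that on the invariant center manifold $N_3=\{q_3=0\}$ (see \eqref{eq.N3}) the reduced system \eqref{eq.reduced3} decouples completely and can be integrated in closed form. Setting $q_3=0$ in \eqref{eq.reduced3} leaves the three equations $\dot r_3=r_3\epsilon_3$, $\dot{\breve\varepsilon}=-2\breve\varepsilon^2$ and $\dot\epsilon_3=-2\epsilon_3^2$, in which the $\breve\varepsilon$- and $\epsilon_3$-equations are independent scalar Riccati equations and the $r_3$-equation is then solved by a single quadrature once $\epsilon_3(t)$ is known.

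First I would integrate the $\epsilon_3$-equation. Since $\tfrac{d}{dt}(\epsilon_3^{-1})=2$, the solution with $\epsilon_3(0)=\eta$ is $\epsilon_3(t)=\eta/(1+2\eta t)$, equivalently $\epsilon_3^{-1}=\eta^{-1}+2t$. Identically, the $\breve\varepsilon$-equation gives $\breve\varepsilon(t)^{-1}=\breve\varepsilon(0)^{-1}+2t$. For the $r_3$-equation I would use $\tfrac{d}{dt}\ln r_3=\epsilon_3=(\eta^{-1}+2t)^{-1}$, which integrates to $r_3(t)=r_3(0)\sqrt{1+2\eta t}=r_3(0)\sqrt{\eta/\epsilon_3(t)}$.

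The only genuine step is then to locate the exit time $T$ at which the trajectory, starting from $r_3(0)=\sqrt{\varepsilon\eta^{-1}}$, reaches the target section $\{r_3=\theta\}$. Solving $\theta=\sqrt{\varepsilon\eta^{-1}}\sqrt{1+2\eta T}$ yields $1+2\eta T=\eta\theta^2\varepsilon^{-1}$, hence $2T=\varepsilon^{-1}\theta^2-\eta^{-1}$. Substituting this value of $T$ into the explicit formulas gives $\epsilon_3^+=\epsilon_3(T)=\eta/(1+2\eta T)=\varepsilon\theta^{-2}$ and $\breve\varepsilon^+=\breve\varepsilon(T)=(\breve\varepsilon(0)^{-1}+2T)^{-1}=(\breve\varepsilon(0)^{-1}+\varepsilon^{-1}\theta^2-\eta^{-1})^{-1}$, which are precisely the two components of $P_3$ asserted in the lemma.

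I do not expect a serious obstacle here: the computation is an explicit integration of a decoupled, exactly solvable system. The only points requiring a word of care are that $\epsilon_3>0$ throughout forces $r_3$ to be strictly increasing, so the exit section $\{r_3=\theta\}$ is indeed reached in finite forward time and the transition map $P_3$ is well-defined; and that one should confirm the trajectory remains inside the box $U_3$, which is immediate since $\breve\varepsilon$ and $\epsilon_3$ both decrease monotonically from their initial values while $r_3$ grows only up to $\theta$.
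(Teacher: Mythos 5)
Your proposal is correct and follows essentially the same route as the paper: both proofs simply exploit that the reduced system \eqref{eq.reduced3}$_{q_3=0}$ is exactly solvable, compute the transit time to the section $\{r_3=\theta\}$, and read off the two components of $P_3$. The only difference is cosmetic --- the paper first divides the right hand side by $\tfrac12\epsilon_3$ and works with $\breve y=\breve\varepsilon^{-1}\varepsilon$, using the conservation $\varepsilon=r_3^2\epsilon_3$ to get the identity $\dot{\breve y}=\frac{d r_3^2}{dt}$, whereas you integrate the Riccati equations for $\breve\varepsilon$ and $\epsilon_3$ and the quadrature for $r_3$ directly in the original time variable, arriving at the same expressions.
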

\begin{proof}
It is here easier to work with 
\begin{align}
 \breve y \equiv \breve \varepsilon^{-1}\varepsilon,\label{eq.brevey}
\end{align}
rather than $\breve \varepsilon$. This gives 
%
\begin{align*}
 \dot r_3 &=2r_3,\\
 \dot{\breve y} &=4r_3^2=2r_3\dot r_3 = \frac{d r_3^2}{dt},\\
 \dot \epsilon_3 &=-4\epsilon_3,
\end{align*}
obtained using $\varepsilon=r_3^2\epsilon_3$ and division of the right hand side by $\frac12 \epsilon_3$. 
Then with initial conditions:
\begin{align*}
 r_3(0) = \sqrt{\varepsilon \eta^{-1}},\,\epsilon_3(0) = \eta,
\end{align*}
and $r_3(T)=\theta$, $\epsilon_3(T) = \varepsilon \theta^{-2}$ we obtain a travel time of $T=\frac12 \ln \left(\theta \sqrt{\nu\varepsilon^{-1}}\right)$. Therefore
\begin{align*}
 \breve y(T) = \breve y(0) +  r_3(T)^2-r_3(0)^2 =  \breve y(0) +\theta^2 - \varepsilon \eta^{-1},
\end{align*}
or by \eqref{eq.brevey}
\begin{align*}
 \breve \varepsilon(T)^{-1} = \breve \varepsilon(0)^{-1} +\varepsilon^{-1}\theta^2 - \eta^{-1},
\end{align*}
which completes the proof.
\end{proof}
Finally, applying $P_3$ to the initial condition \eqref{eq.tildeyM2} we conclude using \eqref{eq.fiber} and $q_3(T) = \mathcal O(e^{-c/\varepsilon})$ that the slow manifold intersects $\{r_3=\theta\}$ with
\begin{align*}
\hat \varepsilon^{-1} \varepsilon &= 
 \theta^2+\frac14 \varepsilon \ln \varepsilon^{-1} +\varepsilon \left(\frac14 \ln \frac{\pi}{2}+R(\sqrt{\varepsilon})\right)\\
 &=\theta^2 + \varepsilon \left( \frac14 \ln \left(\frac{\pi}{2}\varepsilon^{-1}\right)+R(\sqrt{\varepsilon})\right),
\end{align*}
where $R(\sqrt{\varepsilon})=\mathcal O(e^{-c\varepsilon^{-1}})$ is smooth. We illustrate the geometry in Fig. \ref{tanhkappa3}. Since $y=\hat \varepsilon^{-1}\varepsilon$ by \eqref{eq.hallo} this then completes the proof of Theorem \ref{thm.tanh}.
\begin{figure}
\begin{center}
\includegraphics[width=.7\textwidth]{./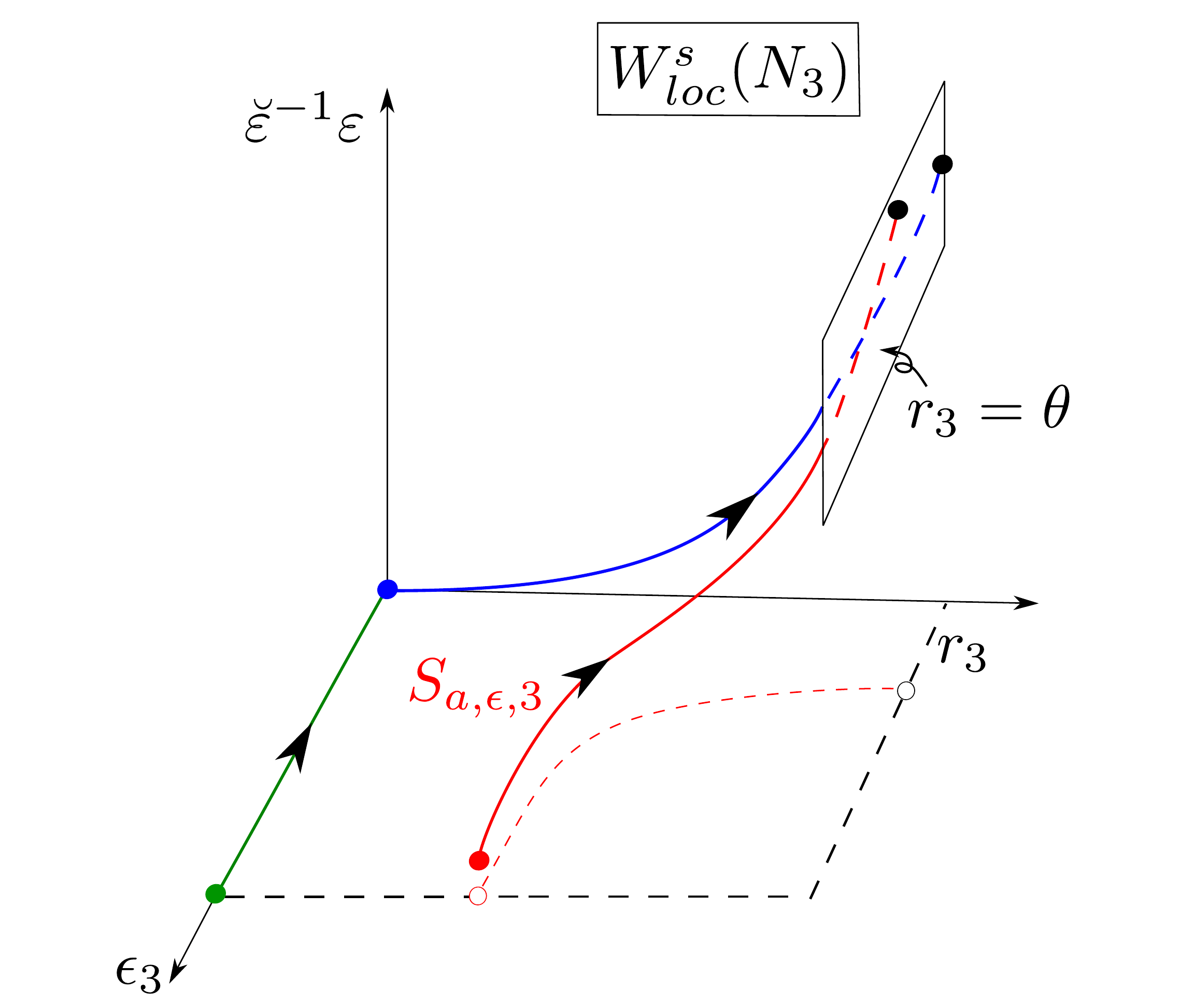}
 \caption{The dynamics within chart $\kappa_3$. }
\label{tanhkappa3}
 \end{center}
              \end{figure}

\section{Analysis of the charts $\kappa_{1-4}$ of the blowup \eqref{eq.FinalBlowup}}\label{appD}
\subsection*{Chart $\kappa_1$}
Here we prove Lemma \ref{lem.newKappa1}. The equations in chart $\kappa_1$ are
\begin{align*}
 \dot \rho_1 &= \rho_1 (x_1-1)\left[b+\frac{a\rho_1^4 y_1^2}{1+a\rho_1^2 y_1}\right],\nonumber\\
 \dot x_1 &=y_1 \left(1+\alpha \rho_1^2 y_1+\rho_1^2 x_1(x_1-1)\right)-x_1(x_1-1)\left[b+\frac{a\rho_1^4 y_1^2}{1+a\rho_1^2 y_1}\right],\nonumber\\
 \dot y_1 &=-y_1(x_1-1)\left(2\left[b+\frac{a\rho_1^4 y_1^2}{1+a\rho_1^2 y_1}\right]-y_1\rho_1^2\right).\nonumber
\end{align*}
The linearization about the origin $(\rho_1,x_1,y_1)=0$ gives $-b$, $b$ and $2b$ as eigenvalues with associated eigenvectors:
\begin{align*}
 (1,0,0),\,(0,1,0),\,(0,1,b),
\end{align*}
respectively. Using the invariance of the two planes $\{y_1=0\}$, $\{\rho_1=0\}$ we then have  $W^s(0):\,x_1=y_1=0,\,\rho_1\ge 0$ and that $W^u(0)\subset \{\rho_1=0\}$. Since $W^{uu}(0)\subset W^u(0)$ is tangent to the strong eigenvector $(0,1,b)$ at the origin this then proves $1^\circ$ of Lemma \ref{lem.newKappa1}.

%
%
%
Within $\{y_1=0\}$ we have
\begin{align*}
 \dot \rho_1 &= \rho_1 (x_1-1),\\
 \dot x_1 &= -x_1 (x_1-1),
\end{align*}
after division of the right hand side by $\left[b+\frac{a\rho_1^4 y_1^2}{1+a\rho_1^2 y_1}\right]>0$,
the dynamics leaving the hyperbolas $\rho_1 x_1=\text{const}.$ invariant. Here we find $C_1:\, x_1=1$, corresponding to $C_2$ \eqref{eq.C2} above for $\rho_1>0$, as a line of fix-points. By blowing up we have gained hyperbolicity of $C_2$ at $\rho_1=0$. Indeed, linearization about 
\begin{align}
 L_1:\quad \rho_1=y_1=0,\,x_1=1,\label{eq.L1}
\end{align}
gives $-b$ as a single non-zero eigenvalue. By center manifold theory we therefore obtain a two-dimensional center manifold $S_1$ of $L_1$ so that $S_1\cap \{y_1=0\}=C_1$. $S_1$ is an extension of $S_2$ in \eqref{eq.S2lemma} into this chart and a simple computation gives \eqref{eq.S1Expr}. Within $\{\rho_1=0\}$ we obtain the following system
\begin{align}
\dot x_1 &=y_1 -bx_1(x_1-1),\label{eq.rho1Eq0}\\
 \dot y_1 &=-2by_1(x_1-1).\nonumber
\end{align}
Here $(x_1,y_1)=(1,0)$ is an equilibrium with eigenvalues $-b$, $0$ with stable manifold $W^s:\,y_1=0,\,x_1>0$ and an inflowing, non-unique center manifold $D_1$ tangent to the eigenvector $(1,b)$:
\begin{align*}
 D_1:\quad x_1 = 1+\frac{y_1}{b}(1+\mathcal O(y_1)),\quad y_1\in [0,\chi].
\end{align*}
However, it is relatively straightforward to set up a trapping region within the $(x_1,y_1)$-plane and guide the strong unstable manifold $W^{uu}(0)$ forward and show that it is asymptotic to $(x_1,y_1)=(1,0)$. (Another approach in this case is to write the system in the $(x_1,q_1)$-variables, see \eqref{eq.Airkappa3} below, where the system can be integrated explicity). 
We can therefore take $D_1 = W^{uu}(0)\cap V_1$ for $V_1$ a small neighborhood of \eqref{eq.L1}. This proves $2^\circ$ and $3^\circ$ and gives rise to the dynamics of $\kappa_1$ illustrated in Fig. \ref{airExtra}. 

\subsection*{Chart $\kappa_2$}
In this chart we obtain the following equations
\begin{align*}
 \dot \rho_2 &= -\rho_2 y_2 \left[1+\alpha \rho_2^2 y_2+\rho_2^2(1+q_2)\right],\\
 \dot y_2 &=y_2^2 \left(2\left[1+\alpha \rho_2^2 y_2+\rho_2^2(1+q_2)\right]-\rho_2^2 (1+q_2)\right),\\
 \dot q_2 &=-q_2 \left((1+q_2)\left(b+\frac{a\rho_2^4 y_2^2}{1+a\rho_2^2 y_2}\right)-y_2\left[1+\alpha \rho_2^2 y_2+\rho_2^2(1+q_2)\right]\right).
\end{align*}
We again have two invariant planes: $\{y_2=0\}$ and $\{q_2=0\}$. Within $\{y_2=0\}$ we have 
\begin{align*}
 \dot \rho_2 &=0,\\
 \dot q_2 &=-b q_2 (1+q_2),
\end{align*}
with dynamics, in agreement with \eqref{eq.eqx2q2}, just on $q_2$, contracting towards the line of fix-points: $q_2=y_2=0$. Within $\{q_2=0\}$, on the other hand, we have
\begin{align*}
\dot \rho_2 &= -\rho_2 y_2 \left[1+\alpha \rho_2^2 y_2+\rho_2^2\right],\\
 \dot y_2 &=y_2^2 \left(2(1+\alpha \rho_2^2 y_2)+\rho_2^2\right),
\end{align*}
discovering here again $q_2=y_2=0$ as a set of fix-points. Division of the right hand side by  $y_2$ gives
\begin{align*}
\dot \rho_2 &= -\rho_2\left[1+\alpha \rho_2^2 y_2+\rho_2^2\right],\\
 \dot y_2 &=y_2 \left(2(1+\alpha \rho_2^2 y_2)+\rho_2^2\right),
\end{align*}
where $\rho_2=y_2=0$ now becomes the only equilibrium with eigenvalues $-1$ and $2$; hence a saddle. Combining this gives rise to the dynamics of $\kappa_2$ illustrated in Fig. \ref{airExtra}.  

\subsection*{Chart $\kappa_3$}
This chart is simple. Within $\rho_3=0$, for example, we obtain 
\begin{align}
 \dot x_3 &=1,\label{eq.Airkappa3}\\
 \dot q_3 &= bq_3(x_3-q_3).\nonumber
\end{align}
Integrating this one-dimensional system gives rise to the dynamics of $\kappa_3$ illustrated in Fig. \ref{airExtra}. Each orbit of  \eqref{eq.Airkappa3} has a vertical asymptote $x_3=\text{const}.$ The strong unstable $W^{uu}$ from chart $\kappa_1$ corresponds to the special solution
\begin{align*}
q_3(x_3) = i\sqrt{\frac{2}{b\pi}}\exp\left(\frac{b}2 x_3^2\right)\textnormal{erf}\left(i\sqrt{\frac{b}{{2}}}x_3\right)^{-1}, 
\end{align*}
in this chart, which blows up $q_3\rightarrow \infty$ as $x_3\rightarrow 0^+$, satisfying (using simple asymptotics of the Gauss error function $\textnormal{erf}$ \eqref{eq.errFunction}) 
\begin{align*}
 q_3(x_3) = \frac{1}{bx_3}+\mathcal O(x_3),\quad q_3(x_3) = x_3-\frac{1}{bx_3}+\mathcal O(x_3^{-3}),
\end{align*}
for $x_3\rightarrow 0^+$ and $x_3\rightarrow \infty$, respectively.

\subsection*{Chart $\kappa_4$}
In this chart we obtain the following equations
\begin{align*}
 \dot \rho_4 &= \rho_4 y_4 \left[1+\alpha \rho_4^2 y_4+\rho_4^2 (1-q_4)\right],\\
 \dot y_4 &= -y_4^2 \left(2\left[1+\alpha \rho_4^2 y_4+\rho_4^2 (1-q_4)\right]-\rho_4^2(1-q_4)\right),\\
  \dot q_4 &= q_4 \left(1-q_4-y_4 \left[1+\alpha \rho_4^2 y_4+\rho_4^2 (1-q_4)\right]\right).
\end{align*}
Here we re-discover the fix point $L_1$, the line of fix points $C_1\subset \{y_1=0\}$ and the inflowing center $D_1\subset \{\rho_1=0\}$ from chart $\kappa_1$ as 
\begin{align*}
L_4:\quad &\rho_4 = 0,\,y_4=0,\,q_4 = 1,\\
 C_4:\quad &\rho_4>0,\,y_4=0,\,q_4 = 1,\\
 D_4:\quad &\rho_4 =0,\,q_4 = 1-\frac{1}{b}y_4(1+\mathcal O(y_4)) ,\quad y_4\in [0,\chi],
 \end{align*}
 respectively, with $\chi$ sufficiently small. The invariant set $\{q_4=0\}$ is unstable for $y_4$ and $\rho_4$ small. Initial conditions from chart $\kappa_3$ (like the blue orbit in Fig. \ref{airExtra}) enter $\kappa_4$ close to this set and, in agreement with Proposition \ref{finalProp}, follow this for some time before contracting towards $S_4$; the attracting center manifold of $L_4$. This completes the picture in Fig. \ref{airExtra}. 

\bibliography{refs}
\bibliographystyle{plain}
 \end{document}